\newcommand{\rr}{\mathbb R}
\newcommand{\cc}{\mathbb C}
\newcommand{\zz}{\mathbb Z}
\newcommand{\A}{\mathcal A}
\newcommand{\bs}{\mathbb S}
\newcommand{\ee}{\mathbf e}
\newcommand{\I}{\mathbf I}
\newcommand{\al}{\alpha}
\newcommand{\bt}{\beta}
\newcommand{\gm}{\gamma}
\newcommand{\ep}{\varepsilon}
\newcommand{\dl}{\delta} 
\newcommand{\tht}{\theta}
\newcommand{\lmd}{\lambda}
\newcommand{\sg}{\sigma}
\newcommand{\kp}{\kappa}
\newcommand{\Lmd}{\Lambda}
\newcommand{\ey}{\frac{1}{2}} 
\newcommand{\pwa}{\frac{2}{2+\al}}
\newcommand{\pwb}{\frac{2 +\al}{2}}
\newcommand{\xb}{\bar{x}}
\newcommand{\tnb}{\tilde{\nabla}}
\def\me#1{\langle\!\langle #1 \rangle\!\rangle}
\theoremstyle{plain}
\newtheorem{thm}{Theorem}[section]
\newtheorem{prop}{Proposition}[section]
\newtheorem{lem}{Lemma}[section]
\theoremstyle{definition}
\newtheorem{dfn}{Definition}[section]
\theoremstyle{remark}
\newtheorem{rem}{Remark}[section]
\begin{document}

\title[positive energy solutions]{Positive energy solutions in the anisotropic Kepler problem with homogeneous potential}
 
%\date{\today}

\author{Guowei Yu}
\address{Chern Institute of Mathematics and LPMC, Nankai University, Tianjin, China}
\email{yugw@nankai.edu.cn}

\thanks{This work is supported by the National Key R\&D Program of China (2020YFA0713303), NSFC (No. 12171253), Nankai Zhide Foundation and the Fundamental Research Funds for the Central Universities.}

\begin{abstract}

We study positive energy solutions of the anisotropic Kepler problem with homogeneous potential. First some asymptotic property of positive energy solutions is obtained, as time goes to infinity. Afterwards, we prove the existence of hyperbolic solutions with given initial configuration and asymptotic behavior, when time goes to positive or negative infinity, and in the planar case, the existence of bi-hyperbolic solutions with given asymptotic behaviors, when time goes to both positive and negative infinities, under various conditions.  
\end{abstract}

\maketitle

\section{Introduction }\label{sec:intro}
In this paper, we study the following second order differential equation in $\rr^d$ with $d \ge 2$  
\begin{equation}
\label{eq;aniso-kepler} \ddot{x} = \nabla U(x),  
\end{equation}
under the following conditions

\begin{equation}
\label{eq;U-homo}  \begin{cases}
U(x) = |x|^{-\al}U( \frac{x}{|x|}), \; & \text{ with } \al \in (0, 2); \\
U|_{\bs} \in C^2(\bs,\; \rr^+), \; & \text{ with } \bs := \{ x \in \rr^d: |x| =1\}. 
\end{cases} \tag{U0}
\end{equation}
Recall that the energy of the system, given as below, is constant along a solution  
\begin{equation}
\label{eq;energy} h = \ey |\dot{x}|^2 - U(x).
\end{equation}

When $\al=1$ and $U|_{\bs}$ is a constant, it is the well-known Kepler problem, which is an integrable system. Moreover, we have explicit expressions for all solutions, which are classified as elliptic, parabolic and hyperbolic solutions corresponding to negative, zero and positive energies. 

In this paper, we are mainly interested in the case that $U|_{\bs}$ is not a constant, which we refer as \emph{the anisotropic Kepler problem (with $\al$-homogeneous potential)}. The cases we are mostly interested in are 
\begin{equation} \tag{U1}
\label{eq;U-aniso} U(x) = \langle x, M x \rangle^{-\frac{\al}{2}}, \text{ where } M = \text{diag}(m_1, \dots, m_d) \text{ with each } m_i >0. 
\end{equation}

When $\al=1$ and $m_i$ are not identical to each other, it was first investigated by the physicist Martin Gutzwiller, as it can describe the motion of an electron inside a semiconductor with a donor impurity. This problem also played an important role in the development of quantum chaos. 

In \cite{Gutz71} Gutzwiller found the famous \emph{Gutzwiller trace formula} and applied it to this problem, when $d=2$ and conditions \eqref{eq;U-aniso} and \eqref{U;spiral} (given below) are satisfied. Later under the same conditions, in \cite{Gutz73} and \cite{Gutz77}, he further investigated periodic solutions and chaotic behaviors in this problem. Rigorous proofs were given by Devaney in \cite{Dev78a} using McGehee's blow-up method developed in the study of triple collision for the three-body problem \cite{Mg74}.

For more background, we refer the reader to Gutzwiller's book \cite{Gutz90}. To distinguish it from the general case, when condition \eqref{eq;U-aniso} holds, we call it \emph{Gutzwiller's anisotropic Kepler problem} (with homogeneous $\al$-potential).

The homogeneous condition in \eqref{eq;U-homo} implies the energy of bounded solutions must be negative and solutions with non-negative energies must be unbounded. Zero energy solutions have recently been studied by Barutello, Terracini and Verzini in \cite{BTV13}  and \cite{BTV14}  using McGehee's blow-up method and variational methods. In \cite{HY18}, Hu and the author studied the Morse indices of zero energy solutions of the planar anisotropic Kepler problem using McGehee's blow-up method and the Maslov-type index theory \cite{Long02}. More recently, Barutello, Canneori and Terracini \cite{BCT21} explored symbolic dynamics of the planar $N$-center problem with anisotropic forces, when energies are negative but close to zero.

 Meanwhile for positive energy solutions, very few results seem to be available, and it will be the main focus of our paper. 

% studied the existence of zero energy solutions with given asymptotic behaviors at infinity 

To explain our results, first let's rewrite \eqref{eq;aniso-kepler} in polar coordinates: $(r, s) = (|x|, \frac{x}{|x|}) \in [0, \infty) \times \bs,$ 
\begin{equation}
\label{eq;aniso-keper-polar} \begin{cases}
& \ddot{r} = r|\dot{s}|^2 + r^{-(\al +1)} \langle \nabla U(s), s \rangle; \\
& r \ddot{s} + 2\dot{r} \dot{s} + r|\dot{s}|^2 s = r^{-(\al +1)} \tnb U(s).
\end{cases} 
\end{equation}
Here $\tnb U(s)$ represents the gradient of $U$ restricted to $\bs$  
\begin{equation}
\label{eq;til-nb} \tnb U(s) = \nabla U(s) - \langle \nabla U(s), s \rangle s. 
\end{equation}
Denote the set of critical points of $U$ on $\bs$ by
$$  Cr(U): = \{s \in \bs: \; \tilde{\nabla} U(s)=0 \}. $$ 
The energy identity in polar coordinates can be written as 
\begin{equation}
\label{eq;energy-polar} h = \ey \dot{r}^2 + \ey r^2 |\dot{s}|^2 - r^{-\al} U(s). 
\end{equation}

The problem has a singularity at the origin, where the potential is infinite. When a solution approaches the origin, it satisfies the following asymptotic estimates.  
\begin{prop}{\cite[Section 4]{BFT08}}
\label{prop;Sundman-est} Given a solution $x(t) =r(t) s(t)$ with $r(t) \to 0$, when $t \to t_0^{\pm}$, then there exist positive constants $\kp_{\pm}$, such that the following asymptotic results hold, as $t \to t_0^{\pm}$,
\begin{enumerate}
\item[(a).] $r(t) \simeq (\kp_{\pm} |t-t_0|)^{\frac{2}{2+\al}}$, $\dot{r}(t) \simeq \pm \frac{2\kp_{\pm}}{2 +\al}\big( \kp_{\pm} |t-t_0| \big)^{\frac{-\al}{2+\al}}$;
\item[(b).] $U(s(t)) \to \bt_{\pm} = \ey \left( \frac{2}{2 +\al} \kp_{\pm} \right)^2$;
\item[(c).] $\text{dist}\big(s(t), \text{Cr}(U;\bt_{\pm}) \big) \to 0$, where $\text{Cr}(U; \bt_{\pm}) = \{s \in \text{Cr}(U): U(s) = \bt_{\pm} \}$. 
\end{enumerate}
\end{prop}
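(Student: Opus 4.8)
The plan is to desingularise the collision by a McGehee-type blow-up and then read off the asymptotics from the flow on the collision manifold. I treat $t\to t_0^-$; the case $t\to t_0^+$ is identical after reversing time (which only flips the sign of $\dot r$, hence the sign in (a)). First a Sundman-type preliminary: with $I=\ey r^2=\ey|x|^2$ and Euler's identity $\langle x,\nabla U(x)\rangle=-\al U(x)$ for the $(-\al)$-homogeneous $U$, the Lagrange--Jacobi identity reads $\ddot I=|\dot x|^2+\langle x,\nabla U\rangle=2h+(2-\al)U(s)r^{-\al}$. Since $0<\min_\bs U\le U(s)\le\max_\bs U$ and $2-\al>0$, we get $\ddot I\to+\infty$, so $\dot I=r\dot r$ is eventually increasing; combined with the crude energy bound $\dot r^2\le 2h+2\max_\bs U\,r^{-\al}$ (a nonzero limit for $r\dot r$ would force $\dot r^2$ to grow like $r^{-2}$, overwhelming the available $r^{-\al}$), this forces $r\dot r\to0$. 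The same identity evaluated where $\dot r=0$ gives $\ddot r=2hr^{-1}+(2-\al)U(s)r^{-1-\al}>0$ for $r$ small, so $r$ has no local maximum near $t_0$; hence $r$ is eventually strictly decreasing and $\dot r<0$ on some interval $(t_*,t_0)$.

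Next the blow-up. Set $v=r^{\al/2}\dot r$, $w=r^{1+\al/2}\dot s$ and change time by $dt=r^{1+\al/2}\,d\tau$. A direct computation from \eqref{eq;aniso-keper-polar} gives, with $'=d/d\tau$,
\begin{equation*}
 r'=rv,\qquad s'=w,\qquad v'=\Big(1-\tfrac{\al}{2}\Big)|w|^2+\al h r^\al,\qquad w'=\Big(\tfrac{\al}{2}-1\Big)vw-|w|^2 s+\tnb U(s),
\end{equation*}
together with the rescaled energy relation $\ey v^2+\ey|w|^2-U(s)=hr^\al$. Hence $v,w$ stay bounded, $v<0$ near the collision, and the collision sits at $\tau\to+\infty$ (a bounded $\tau$-interval would make $\log r=\log r(0)+\int v\,d\tau$ converge, contradicting $r\to0$). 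This system is asymptotically autonomous, limiting as $r\to0$ onto the flow on the collision manifold $\{r=0\}$, where $v'=(1-\tfrac\al2)|w|^2\ge 0$ --- the one place where $\al<2$ is essential --- so $-v$ is a Lyapunov function there. By LaSalle's invariance principle for asymptotically autonomous systems, the $\omega$-limit set lies in the maximal invariant subset of $\{r=0,\ w=0\}$, namely $\{r=0,\ w=0,\ s\in\text{Cr}(U),\ v^2=2U(s)\}$; thus $w(\tau)\to0$ and $\text{dist}(s(\tau),\text{Cr}(U))\to0$, and then the energy relation gives $v(\tau)$ eventually bounded away from $0$.

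Now convergence and unwinding. Since $v$ is bounded away from $0$, the identity $\tfrac{d}{d\tau}r^\al=\al v r^\al$ shows $r^\al$ decays exponentially in $\tau$, so $\int_0^\infty r^\al\,d\tau<\infty$; feeding this into the $v$-equation and using boundedness of $v$ gives $\int_0^\infty|w|^2\,d\tau<\infty$ and $v(\tau)\to v_*$ for a definite $v_*<0$. The energy relation then yields $U(s(\tau))\to\ey v_*^2=:\bt>0$ with $v_*=-\sqrt{2\bt}$, and combining $U(s(\tau))\to\bt$ with $\text{dist}(s(\tau),\text{Cr}(U))\to0$ shows every limit point of $s(\tau)$ lies in $\text{Cr}(U;\bt)$ --- this is (c). Finally $r^{\al/2}\dot r=v\to-\sqrt{2\bt}$ means $\tfrac{d}{dt}\big(\tfrac{2}{2+\al}r^{(2+\al)/2}\big)\to-\sqrt{2\bt}$; integrating from $t$ to $t_0$ (where $r=0$) gives $r(t)\simeq(\kp_-|t-t_0|)^{2/(2+\al)}$ with $\kp_-:=\tfrac{2+\al}{2}\sqrt{2\bt}$, equivalently $\bt=\ey\big(\tfrac{2}{2+\al}\kp_-\big)^2$, and then $\dot r=r^{-\al/2}v\simeq-\tfrac{2\kp_-}{2+\al}(\kp_-|t-t_0|)^{-\al/(2+\al)}$, which is (a) and (b).

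The crux --- and the only place where more than routine computation is involved --- is the convergence of $v$, equivalently of $U(s(t))$: a priori $U(s(t))$ could oscillate as $t\to t_0$, and it is precisely the McGehee rescaling together with the Lyapunov/gradient-like structure on the collision manifold that rules this out. The two technical points requiring care are that the slow time runs to $+\infty$ (so that boundedness can be upgraded to genuine limits) and the correct application of LaSalle's principle to the asymptotically autonomous blown-up system; once $v(\tau)$ is shown to converge, parts (a), (b) and (c) all follow by unwinding.
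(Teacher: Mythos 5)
The paper does not prove this proposition at all: it is quoted verbatim from \cite[Section 4]{BFT08}, whose argument is of Sundman--Wintner type, built on monotone radial quantities rather than on a full blow-up of the flow. Your McGehee route is a legitimate and standard alternative (it is the method of Devaney and of \cite{BTV13}), your rescaled system $r'=rv$, $s'=w$, $v'=(1-\al/2)|w|^2+\al hr^{\al}$, $w'=(\al/2-1)vw-|w|^2s+\tnb U(s)$ is computed correctly, and the preliminary steps ($r\dot r\to0$, eventual monotonicity of $r$, collision at $\tau=+\infty$) and the final unwinding of (a), (b), (c) from the convergence of $v$ are all sound.

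The one step that does not close as written is the appeal to ``LaSalle's invariance principle for asymptotically autonomous systems'' to place the $\omega$-limit set inside $\{r=0,\,w=0\}$. The function $-v$ is a Lyapunov function \emph{only on the collision manifold} $\{r=0\}$; along the actual orbit $v'=(1-\al/2)|w|^2+\al hr^{\al}$ fails to have a sign when $h<0$ (the proposition makes no sign assumption on $h$), and the $\omega$-limit set of an autonomous flow, while invariant, need not avoid orbits along which the boundary Lyapunov function strictly increases --- excluding those is exactly the content of the chain-recurrence refinement of the asymptotically autonomous LaSalle theorem, which you would have to invoke and verify. There is a cleaner patch that works for every $h$ and is already sitting in the paper: the function $\Gamma=\ey r^{\al}(2h-\dot r^2)=hr^{\al}-\ey v^2$ of \eqref{eq;I-Gamma} satisfies $\Gamma'=-(1-\tfrac{\al}{2})v|w|^2\ge0$ once $\dot r<0$ (this is Lemma \ref{lem;Gamma-dot} rewritten in the slow time), and it is bounded above; hence $\Gamma$ converges, hence $v^2=2(hr^{\al}-\Gamma)$ converges, hence $v$ converges since it is eventually of one sign. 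A short contradiction argument (if $v\to0$ then the energy relation forces $|w|^2\to 2U(s)\ge 2\min_{\bs}U>0$, whence $v'>0$ bounded below and $v\to+\infty$) shows the limit is strictly negative, and from there your own chain --- exponential decay of $r^{\al}$ in $\tau$, integrability of $|w|^2$, Barbalat to get $w\to0$, invariance of the $\omega$-limit set to get $\tnb U=0$ on it --- goes through verbatim. With that substitution the proof is complete; as it stands, the LaSalle sentence is the gap.
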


The above type of asymptotic estimates are well-known for collision solutions in celestial mechanics (\cite{Wt41}, \cite{Sp70}, \cite{C98} and \cite{FT04}). In \cite{BFT08}, it was generalized to a large class of quasi-homogeneous potentials with singularity including the case we are considering here.  

\begin{thm} \label{thm;asym} Let $x(t)$ be a positive $h$-energy solution of \eqref{eq;aniso-kepler} with maximal domain $(T^-, T^+)$. For $T^+$, it is either a finite number or positive infinity, and
\begin{enumerate}
\item[(a).] if $T^+$ is finite, then $x(t) \to 0$, when $t \to T^{+}$;
\item[(b).] if $T^{+} = + \infty$, then there exists an $s^{+} \in \bs$, such that when $t \to +\infty$, 
\end{enumerate}
\begin{equation} \label{eq;asym-infty}
 x(t) =
\begin{cases}
\sqrt{2h} t s^{+}-\frac{\nabla U(s^{+})}{2h} \log t+ o(\log t), \; & \text{ if } \al =1;\\
\sqrt{2h} t s^+ -\frac{\nabla U(s^+)}{\al(1-\al)(\sqrt{2h})^{\al+1}} t^{1-\al} + o(t^{1-\al}), \; & \text{ if } \al \in (0, 2) \setminus \{1\}.
\end{cases}
\end{equation} 
A similar result as above holds for $T^-$.
\end{thm}

\begin{rem}
	Although the second order term is obtained in \eqref{eq;asym-infty}, for the sake of simplicity in the theorems stated below we will only write the first order term in the corresponding asymptotic expansions. 
\end{rem}

By the above result, a positive energy solution either approaches the singularity at the origin at a finite time (forwardly or backwardly), or becomes unbounded as time goes to positive or negative infinity. Moreover in the latter it must satisfy the asymptotic expression given in \eqref{eq;asym-infty}. 
 
By Lemma \ref{lem;Lag-Jacobi}, $T^{\pm}$ in the above theorem cannot both be finite. Following notations from celestial mechanics, we will also refer positive energy solutions as \textbf{hyperbolic solutions}. In particular, a positive energy solution will be called \textbf{bi-hyperbolic}, if its maximal domain is $\rr$.

With the above result, we are interested in the following two questions: 

\begin{enumerate}
\item[(Q1):] \emph{for any $h>0$, starting from a given initial configuration, is there an $h$-energy solution with the prescribed asymptotic expression as time goes to negative or positive infinity?}

\item[(Q2):] \emph{for any $h>0$, is there an $h$-energy solution with given asymptotic expressions, when time approaches both positive and negative infinities?}
\end{enumerate}

For the $n$-body problem, a question similar to (Q1) was completely answered first in \cite{MV20}, and later in \cite{LYZ21}, \cite{Yu24} and \cite{PT24} with different approaches but all variational in nature. Meanwhile a question similar to (Q2) is far from being answered, up to our knowledge only partial results are available in \cite{DMMY20}. 

As we can see, (Q2) is closely related to scattering theory, and since Rutherford, the major part of our knowledge about molecules, atoms and elementary particles is obtained by scattering experiment. For more details see \cite[Chapter 12]{Kn18}.

Our approach in this paper is variational as well. Recall that solutions of \eqref{eq;aniso-kepler} correspond to critical points of the following action functional 
\begin{equation}
\A(\gm; t_1, t_2) = \int_{t_1}^{t_2} L(\gm, \dot{\gm}) \,dt = \int_{t_1}^{t_2} \ey |\dot{\gm}|^2 + U(\gm) \,dt, \; \gm \in H^1_{t_1, t_2}(p, q).
\end{equation}
For any $p, q \in \rr^d$ and $t_1< t_2 \in \rr$, $H^1_{t_1, t_2}(p, q)$ denotes the set of all Sobolev paths going from $p$ to $q$, as time goes from $t_1$ to $t_2$.

\begin{dfn}
\label{dfn;fixed-time minimizer}  We say $\gm \in H^1_{t_1, t_2}(p, q)$ is \textbf{a fixed-time minimizer} of the action functional $\A$, if 
$$ \A(\gm; t_1, t_2) \le \A(\xi; t_1, t_2), \; \forall \xi \in H^1_{t_1, t_2}(p, q),$$
and \textbf{a fixed-time local minimizer}, if the above $\xi$'s further satisfy
$$ \|\xi -\gm\|_{H^1} < \dl, \text{ for some } \dl>0 \text{ small enough.}$$
\end{dfn}

A minimizer, or more general a critical point of $\A$, is a classical solution of \eqref{eq;aniso-kepler}, only if it does not contain any singularity. We call such a minimizer \emph{collision-free}, following notations from celestial mechanics. To apply variational methods to our problem, or any problem with singularity, it is crucial to rule out singularities in the corresponding critical points.

When $\al \ge 2$, it is well-known any critical point of $\A$ with finite action value can not contain any singularity. However for $\al \in (0, 2)$, critical points with finite action value could still contain singularities. In particular, if only condition \eqref{eq;U-homo} is satisfied, a local minimizer may not always be collision-free. 

For the $n$-body problem, a local minimizer must be collision-free. This was proven in \cite{C02} for the Newtonian potential and in \cite{FT04} for $\al \in (0, 2)$ using Marchal's averaging method \cite{Mc02}. Such a method can be applied to the anisotropic problem, if the potential is invariant under certain rotational symmetry, and the following result was proven in \cite{BFT08}. 

\begin{prop}{\cite[Theorem 6.1]{BFT08}} \label{prop;coll-free-1}
	A fixed-time local minimizer of $\A$ is collision-free, if
\begin{equation}
\label{eq;U-inv} \text{there exists a two-dim linear subspace } W \subset \rr^d \text{ with } U|_{W \cap \bs} = \text{constant}. \tag{U2}
\end{equation}
\end{prop}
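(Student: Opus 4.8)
The plan is to adapt Marchal's averaging method, following the scheme of \cite{FT04} and \cite{BFT08}, to the anisotropic potential under hypothesis \eqref{eq;U-inv}. Suppose, for contradiction, that a fix-time local minimizer $\gm \in H^1_{t_1,t_2}(p,q)$ has a collision at some interior time $t_0$, i.e. $\gm(t_0)=0$. Localizing, we may assume $\gm$ has a single collision at $t_0$ and is otherwise smooth on a small interval $(t_0-\tau, t_0+\tau)$, with endpoints $\gm(t_0\pm\tau)=:a_\pm\neq 0$. The strategy is to produce a competitor path, agreeing with $\gm$ outside $(t_0-\tau,t_0+\tau)$, that strictly lowers the action — contradicting local minimality once $\tau$ (hence the $H^1$-distance) is taken small.

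The key steps, in order, are as follows. First I would record the asymptotic estimates near the collision supplied by Proposition \ref{prop;Sundman-est}: writing $\gm = r s$, one has $r(t)\simeq (\kp|t-t_0|)^{2/(2+\al)}$, the limit $U(s(t))\to\bt$, and $\mathrm{dist}(s(t),\mathrm{Cr}(U;\bt))\to 0$; in particular $s(t)$ converges to the critical set, and after a rotation using \eqref{eq;U-inv} we may assume the relevant limiting directions lie in (or can be pushed into) the distinguished $2$-plane $W$ on which $U$ is rotationally invariant. Second, I would introduce the averaged competitors: for a rotation $R_\theta$ acting in $W$, consider the family $\gm_\theta(t) := \rho_\theta(t)\,\big(\text{suitably rotated }\gm(t)\big)$ obtained by rotating the angular part inside $W$ by an angle that interpolates from $0$ at $t_0-\tau$ to $0$ at $t_0+\tau$ while being a genuine rotation near $t_0$, and compute the average of $\A(\gm_\theta)$ over $\theta$. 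Because $U|_{W\cap\bs}$ is constant, the potential term is \emph{unchanged} by the rotation in $W$; the averaging only affects the kinetic term, and Marchal's computation shows the average of the kinetic energy over $\theta$ is strictly less than the kinetic energy of $\gm$ unless $\gm$ already rotates rigidly — which a colliding solution cannot, since a rigid rotation through the origin is not a solution of \eqref{eq;aniso-kepler}. Third, a short additional argument handles the case where the collision direction is not a priori in $W$: one first uses the variational inequality (the minimizer beats the "lower-energy" test paths) to force the asymptotic directions $s(t_0^\pm)$ into $W\cap\bs$, exactly as in \cite[Theorem 6.1]{BFT08}; the homogeneity $\al\in(0,2)$ is what makes these local modifications have finite, controllably small action cost. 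Finally, one checks the competitor is $H^1$-close to $\gm$ (its distance is $O(\tau)$, or a suitable power of $\tau$), so it lies in the admissible neighborhood, and the strict decrease of the action yields the contradiction; the boundary case of a collision at $t_1$ or $t_2$ is handled by a one-sided version of the same construction.

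The main obstacle I anticipate is making the averaging estimate genuinely strict and uniform near the collision time: one must show that the gain in the kinetic term — which comes from the "wandering" of the angular component $s(t)$ as it spirals into $\mathrm{Cr}(U;\bt)$ — dominates any loss incurred by the interpolating cutoff of the rotation angle near the endpoints $t_0\pm\tau$, and this requires the fine asymptotics of Proposition \ref{prop;Sundman-est} together with the fact that $\al<2$ keeps $\dot r\in L^2$. The other delicate point is the reduction to $W$: if $\mathrm{Cr}(U;\bt)$ meets $W\cap\bs$ only tangentially (or not at all for a given energy level), one needs the rotational invariance of $U$ on $W$ to be exploited at the level of the \emph{angular} dynamics rather than pointwise, so that the rotated path is still admissible and still collides. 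Both issues are precisely where \eqref{eq;U-inv} enters, and the argument of \cite{BFT08} shows they can be resolved; I would simply cite and reproduce that construction, emphasizing the two points above.
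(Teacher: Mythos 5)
The central mechanism in your sketch is inverted, and as written the construction cannot work. You propose to average over \emph{rotations} $R_\theta$ of the path inside $W$, claiming that the potential term is unchanged while ``the average of the kinetic energy over $\theta$ is strictly less than the kinetic energy of $\gm$.'' Both halves fail. First, a rotation fixes the origin, so every competitor $\gm_\theta$ still passes through $0$ at the collision time; the family never detaches the path from the singularity. Second, writing $\gm_\theta(t)=R_{\theta\chi(t)}\gm(t)$ with a cutoff $\chi$, one gets $|\dot\gm_\theta|^2=|\dot\gm|^2+2\theta\dot\chi\,\langle R_{\theta\chi}\dot\gm,\,R'_{\theta\chi}\gm\rangle+\theta^2\dot\chi^2|\gm|^2$; the cross term averages to zero over a symmetric range of $\theta$ and the last term is nonnegative, so the averaged kinetic energy can only \emph{increase}. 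There is no gain anywhere, hence no contradiction with minimality.

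Marchal's averaging, as used in \cite[Theorem 6.1]{BFT08} and as reproduced by this paper in Section \ref{sec;coll-free} for the Gutzwiller case, works the other way around: after blowing up to the homothetic collision--ejection solution $\xb$ (Lemmas \ref{lem;blow-up} and \ref{lem;blow-up-homo}), one perturbs by \emph{translations} $\xb+f_{\ep}(t)\sg$ with $\sg$ ranging over the circle $W\cap\bs$ and $f_\ep$ the tent function of \eqref{eq;f-ep}. The kinetic term then costs $O(\ep)$ (the linear term in $\sg$ averages to zero over the circle, leaving $\tfrac12\int\dot f_\ep^2$, as in \eqref{eq;A1}), while the circle-average of the potential along the perturbed paths drops by an amount of order $\ep^{\frac{2-\al}{2}}$ near the collision (the analogue of \eqref{eq;A2}); it is precisely here that the rotational invariance $U|_{W\cap\bs}=\mathrm{const}$ is needed to control the angular average of $U(\xb+f_\ep\sg)$, and $\al\in(0,2)$ makes $\ep^{\frac{2-\al}{2}}$ dominate $\ep$. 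Since the average over $\sg$ strictly decreases the action, some fixed $\sg$ does too, and pulling the deformation back through the blow-up (Lemma \ref{lem;action-homo}) contradicts local minimality. Your preliminary reduction (isolating the collision, invoking Proposition \ref{prop;Sundman-est}) is fine, but the averaged family must consist of translated, not rotated, paths; also, there is no need to force the limiting directions $s^{\pm}$ into $W$ --- the perturbation directions, not the collision directions, live in $W$.
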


Here we prove that for Gutzwiller's anisotropic Kepler problem, a local minimizer must be collision-free. 

\begin{prop} \label{prop;coll-free-2}
Under condition \eqref{eq;U-aniso}, a fixed-time local minimizer of $\A$ is collision-free.  
\end{prop}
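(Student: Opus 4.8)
The plan is to argue by contradiction and rule out the collision with a Marchal–Ferrario–Terracini–type averaged variation, using the eigenstructure of $M$ in place of the rotational invariance that drives Proposition~\ref{prop;coll-free-1}. Suppose $\gm\in H^1_{t_1,t_2}(p,q)$ is a fix-time local minimizer that is not collision-free, i.e. $\gm(t_0)=0$ for some $t_0$; one may assume $p,q\neq 0$, so the collision is interior, and (collisions being isolated) I fix one such $t_0\in(t_1,t_2)$ and write $\gm(t)=r(t)s(t)$ near $t_0$ with $r=|\gm|$. Under \eqref{eq;U-aniso}, $Cr(U)=\{s\in\bs:\ Ms\parallel s\}$; if two of the $m_i$'s coincide, the corresponding $e_i$'s span a plane on which $U$ is constant, so \eqref{eq;U-inv} holds and Proposition~\ref{prop;coll-free-1} already applies. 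Hence I may assume the $m_i$ are pairwise distinct, so $Cr(U)=\{\pm e_1,\dots,\pm e_d\}$ is finite and Proposition~\ref{prop;Sundman-est}(c) forces $s(t)\to\bar s=\pm e_j$ for some $j$, with $\bt:=U(\bar s)=m_j^{-\al/2}$. The computation I would rely on is that for $k\neq j$,
\[
\langle e_k,\,D^2U(\bar s)\,e_k\rangle=-\tfrac{\al m_k}{m_j}\,\bt<0,
\]
a consequence of the $(-\al)$–homogeneity of $U$ together with $Me_j=m_je_j$, $Me_k=m_ke_k$; more generally $\langle v,D^2U(w)v\rangle<0$ for every unit $v\in\bar s^\perp$ and every unit $w$ close enough to $\bar s$ (equivalently $\sum_{i\neq k}m_iw_i^2>(\al+1)m_kw_k^2$ when $v=e_k$), hence by homogeneity $\langle v,D^2U(\xi)v\rangle<0$ whenever $\xi/|\xi|$ is near $\bar s$. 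This ``transversal concavity'' of $U$ along the collision axis $\rr\bar s$, together with $M>0$ (so that the singularity is an isolated point, not a collision wall), is the structural input that replaces \eqref{eq;U-inv}.

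Next I would perturb transversally to $\bar s$ and average. Put $E:=\bar s^\perp=\mathrm{span}\{e_i:i\neq j\}$, which is $M$–invariant with $\langle \bar s,Mv\rangle=0$ and $\langle v,Mv\rangle\ge m_*:=\min_{i\neq j}m_i$ for $v\in E\cap\bs$. Fix a bump $\phi\in C^\infty_c(t_1,t_2)$ with $0\le\phi\le1$, $\phi(t_0)=1$, $\mathrm{supp}\,\phi\subset(t_0-\tau,t_0+\tau)$, where $\tau$ is small enough that $\sup_{\mathrm{supp}\,\phi}|s-\bar s|\le\eta$ (the required smallness of $\eta$ chosen later) and $\mathrm{supp}\,\phi$ meets no other collision of $\gm$. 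For $\dl>0$ and $v\in E\cap\bs$ set $\gm_v:=\gm+\dl\phi v\in H^1_{t_1,t_2}(p,q)$; since $\langle\gm_v,M\gm_v\rangle=(1+O(\eta))\bigl(m_jr^2+\langle v,Mv\rangle\dl^2\phi^2\bigr)>0$ on $\mathrm{supp}\,\phi$, $\gm_v$ is collision-free and $\A(\gm_v;t_1,t_2)<\infty$. Averaging over $v\in E\cap\bs$ (two antipodal points when $d=2$) annihilates every term linear in $v$, giving
\[
\operatorname{avg}_v \A(\gm_v;t_1,t_2)=\A(\gm;t_1,t_2)+\tfrac{\dl^2}{2}\!\int_{t_1}^{t_2}\!\dot\phi^2\,dt+\int_{t_1}^{t_2}\!\bigl(\operatorname{avg}_v U(\gm_v)-U(\gm)\bigr)\,dt,
\]
and since $\|\gm_v-\gm\|_{H^1}=\dl\|\phi v\|_{H^1}\to0$, it suffices to make the right-hand side $<\A(\gm;t_1,t_2)$ for $\dl$ small.

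For the potential term I would split $\mathrm{supp}\,\phi$ at $r=\Lmd\dl$ for a large constant $\Lmd$. On $\{r\ge\Lmd\dl\}$ the whole segment from $\gm(t)$ to $\gm_v(t)$ has direction within $\arcsin(1/\Lmd)+O(\eta)$ of $\bar s$, so (fixing $\Lmd$ large, then $\eta$ small) $\langle v,D^2U(\cdot)v\rangle<0$ along it; the exact second-order Taylor formula with integral remainder then gives $U(\gm_v)-U(\gm)\le\dl\phi\langle\nabla U(\gm),v\rangle$ pointwise, and averaging in $v$ yields $\operatorname{avg}_v U(\gm_v)-U(\gm)\le 0$ on $\{r\ge\Lmd\dl\}$. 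On $\{r<\Lmd\dl\}$ I would instead use the exact cancellation $\langle\bar s,Mv\rangle=0$: $\langle\gm_v,M\gm_v\rangle\ge(1-C\eta)(m_jr^2+m_*\dl^2\phi^2)$, hence, uniformly in $v$,
\[
U(\gm_v)-U(\gm)\ \le\ \bigl(m_jr^2+m_*\dl^2\phi^2\bigr)^{-\al/2}-(m_jr^2)^{-\al/2}+C\eta\,(m_jr^2)^{-\al/2}.
\]
Using $r(t)\simeq(\kp|t-t_0|)^{\pwa}$, $\phi\ge\tfrac12$ near $t_0$, the change of variables $|t-t_0|=\dl^{\pwb}\sg$ and $\int_0^\infty\sg^{-\al\pwa}\,d\sg<\infty$ (this is where $\al<2$ is used), both integrals over $\{r<\Lmd\dl\}$ are $O(\dl^{(2-\al)/2})$, the first being $\le-c_1\dl^{(2-\al)/2}$ with $c_1>0$ — the genuine gain, coming from $U(\gm)\simeq\bt r^{-\al}\to\infty$. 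Choosing $\eta$ small enough to absorb the $C\eta$–error into $c_1$,
\[
\operatorname{avg}_v \A(\gm_v;t_1,t_2)-\A(\gm;t_1,t_2)\ \le\ O(\dl^2)-\tfrac{c_1}{2}\,\dl^{(2-\al)/2}\ <\ 0
\]
for $\dl$ small, because $\tfrac{2-\al}{2}<2$. So some $\gm_v$, which is $H^1$-close to $\gm$, has strictly smaller action, contradicting local minimality.

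The hard part is the $\{r<\Lmd\dl\}$ estimate: there $U(\gm)$ and $\operatorname{avg}_v U(\gm_v)$ are both of size $\dl^{-\al}$, and the true gain, the error caused by $s(t)\neq\bar s$, and (absent the averaging) the first-order term are a priori all of the same order $\dl^{(2-\al)/2}$, so the argument closes only under the correct order of quantifiers — fix $\Lmd$ so the transversal concavity holds on $\{r\ge\Lmd\dl\}$, then $\tau$ (hence $\eta$) small, then $\dl$ small — together with $M>0$ and the sign $\langle e_k,D^2U(\bar s)e_k\rangle<0$. This is precisely the place where condition \eqref{eq;U-inv} is invoked in \cite{BFT08}; here that role is played by the eigenstructure of $M$.
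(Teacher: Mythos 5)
Your framework (deform the minimizer directly by $\dl\phi v$ and average over transversal directions $v$, Marchal-style, rather than blowing up first and deforming the limiting homothetic motion as the paper does following Ferrario--Terracini and Montgomery) is viable, and your estimates are essentially correct \emph{in the case where the two one-sided limits of $s(t)=\gm(t)/|\gm(t)|$ at the collision lie on the same coordinate axis}. But there is a genuine gap: Proposition \ref{prop;Sundman-est} only gives, separately for $t\to t_0^-$ and $t\to t_0^+$, convergence of $s(t)$ to some point of $Cr(U)=\{\pm\ee_1,\dots,\pm\ee_d\}$, with possibly different limits $s^-$ and $s^+$ and even different levels $\bt_\pm$. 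Your setup silently assumes a single $\bar s$ with $\sup_{\mathrm{supp}\,\phi}|s-\bar s|\le\eta$, so the case $s^-=\pm\ee_{j_1}$, $s^+=\pm\ee_{j_2}$ with $j_1\ne j_2$ is not addressed. This is exactly \emph{Case 1} of the paper's Lemma \ref{lem;homothetic-not-minimizer}, and it cannot be dismissed a priori. (You also assert isolatedness of collisions in passing; the paper needs Lemma \ref{lem;coll-iso} for that, though this part is easily repaired.)

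The mixed case is not a notational issue: your argument actually breaks there. In the planar case $d=2$ with $m_1\ne m_2$ --- the only planar case where \eqref{eq;U-inv} fails and Proposition \ref{prop;coll-free-2} is needed --- there is no nonzero direction orthogonal to both $\ee_1$ and $\ee_2$, so whichever hyperplane $E=\bar s^{\perp}$ you pick, on one side of the collision the admissible $v$ has a component along the local radial direction $s^+$. There your transversal concavity fails ($\langle \ee_2, D^2U(\ee_2)\ee_2\rangle=\al(\al+1)m_2^{-\al/2}>0$), and the resulting positive averaged second-order contribution on $\{r\ge\Lmd\dl\}$ is of size $\dl^2\int r^{-\al-2}\,dt\sim \Lmd^{-\frac{2+\al}{2}}\dl^{\frac{2-\al}{2}}$, i.e.\ of the \emph{same} order as your claimed gain; worse, on $\{r<\Lmd\dl\}$ the key cancellation $\langle s, Mv\rangle=O(\eta)$ is false (it tends to $\pm m_2$), and for $v=-s^+$ the perturbed path $\gm+\dl\phi v$ passes arbitrarily close to the origin, so the bound $U(\gm_v)\le (m_jr^2+m_*\dl^2\phi^2)^{-\al/2}+\dots$ fails outright and the average can even be infinite for $\al\ge 1$. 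The paper's resolution of this case is necessarily asymmetric and non-averaged: it takes the single deformation direction $\sg=s^-$, which is orthogonal to $s^+$ in both $\langle\cdot,\cdot\rangle$ and $\me{\cdot,\cdot}$ (giving the transversal gain on the $+$ side) while being a radial \emph{outward} push on the $-$ side, where the favorable sign of the first-order terms $\langle\sg,s^-\rangle>0$ and $\me{\sg,s^-}>0$ in \eqref{eq;B1}--\eqref{eq;B3} is exploited rather than averaged away. To complete your proof you would need to supply an analogue of this construction (or another argument excluding $j_1\ne j_2$); for $d\ge3$ you could instead average over $v\in\{\ee_{j_1},\ee_{j_2}\}^{\perp}\cap\bs$, but that option is unavailable in the plane.
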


\begin{rem}
Notice that for Gutzwiller's anisotropic Kepler problem, condition \eqref{eq;U-inv} is satisfied, if $m_i = m_j$ for some $i \ne j$, but not if $m_i \ne m_j$ for any $i \ne j$. In particular, for the planar case, condition \eqref{eq;U-inv} is satisfied only when it is the Kepler problem.  
\end{rem}

Using the above propositions, we can prove the following result regarding (Q1).

\begin{thm}
\label{thm;hyper} Under condition \eqref{eq;U-aniso} or \eqref{eq;U-inv}, for any $h>0$, $x_0 \in \rr^d$ and $s^{\pm} \in \bs$, there is an $h$-energy hyperbolic solution $x: [t_0, \pm \infty) \to \rr^d$ satisfying $x(t_0) =x_0$ and 
$$ x(t) = \sqrt{2h} |t| s^{\pm} + o(|t|), \; \text{as } t \to \pm \infty. $$  
\end{thm}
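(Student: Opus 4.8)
The plan is to construct the hyperbolic solution as a limit of fix-time minimizers on a sequence of bounded intervals, and then use the collision-free results (Proposition \ref{prop;coll-free-1} or \ref{prop;coll-free-2}) together with the asymptotic description of Theorem \ref{thm;asym} to identify the limiting object as the desired solution. To set this up, I fix $h>0$, $x_0\in\rr^d$, and $s^+\in\bs$ (the case $s^-$ is symmetric), and for each large $n$ I choose a target point $q_n := R_n s^+$ with $R_n \to \infty$, and a terminal time $t_n$ related to $R_n$ roughly by the heuristic $R_n \simeq \sqrt{2h}\, t_n$ coming from \eqref{eq;asym-infty}. On each interval $[t_0, t_n]$ I minimize $\A(\cdot\,; t_0, t_n)$ over $H^1_{t_0,t_n}(x_0, q_n)$. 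Existence of a minimizer $\gm_n$ follows from the direct method: the Lagrangian is coercive and weakly lower semicontinuous, $U>0$ so the action is bounded below by the free-particle action, and standard arguments (as in \cite{BFT08}, \cite{FT04}) give a minimizer in the weak $H^1$ closure; a priori upper bounds on $\A(\gm_n; t_0, t_n)$ come from plugging in the straight-line path from $x_0$ to $q_n$, which has action $\frac{|q_n - x_0|^2}{2(t_n-t_0)} + \int U$, and with the right calibration of $t_n$ this is $\sqrt{2h}\,R_n + O(1)$-type, matching the expected growth.

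The next step is to pass to the limit. By Proposition \ref{prop;coll-free-1} or \ref{prop;coll-free-2} (applicable precisely because we assume \eqref{eq;U-aniso} or \eqref{eq;U-inv}), each $\gm_n$ is collision-free, hence a genuine classical solution of \eqref{eq;aniso-kepler} on $[t_0, t_n]$, and in particular it has a well-defined energy $h_n$ which I will need to show converges to $h$; this is arranged by the choice of $t_n$, or alternatively by reparametrizing so that the energy is exactly $h$ and letting the terminal data float. The initial velocities $\dot\gm_n(t_0)$ are bounded: from the energy relation $\ey|\dot\gm_n(t_0)|^2 = h_n + U(x_0)$, boundedness of $\dot\gm_n(t_0)$ is equivalent to boundedness of $h_n$, which follows from the action estimates above via a Clarke-duality / Noether-type argument, or more concretely because the minimizing property forces the path not to waste kinetic energy. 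With $(\gm_n(t_0), \dot\gm_n(t_0))$ in a compact set, I extract a subsequence converging to some $(x_0, v_\infty)$, and by continuous dependence on initial conditions the solutions $\gm_n$ converge in $C^2_{loc}$ on $[t_0,\infty)$ — one must check the limit solution does not run into the origin in finite time, which is where the positivity of energy and the fact that each $\gm_n$ is a minimizer (so its limit inherits a local minimizing property ruling out collisions) come in — to a solution $x(t)$ on $[t_0, +\infty)$ with $x(t_0) = x_0$ and energy $h$.

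Finally I must verify the asymptotic direction: $x(t)/|x(t)| \to s^+$ as $t\to+\infty$. By Theorem \ref{thm;asym}(b), since $x$ is a positive-energy solution defined on $[t_0,+\infty)$, it automatically satisfies $x(t) \simeq \sqrt{2h}\,|t| s^\infty + o(|t|)$ for \emph{some} $s^\infty\in\bs$; the content is to show $s^\infty = s^+$. This is the main obstacle and the place where the minimization is really used. The idea is a comparison/calibration argument: if $s^\infty \ne s^+$, then for large $n$ the minimizer $\gm_n$, which ends very near the ray $\rr^+ s^+$, would have to bend significantly to reach $q_n$ while its bulk behaviour points toward $s^\infty$; one estimates that such bending costs strictly more action than a competitor path that heads off in the $s^+$ direction and then adjusts near the end, contradicting minimality once $R_n$ is large. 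Concretely, I would compare $\A(\gm_n; t_0,t_n)$ against the action of an explicit quasi-hyperbolic test path (a Keplerian-type hyperbola, or a broken path: straight to a point near $R_n s^+$ then a short correction), and show the difference of actions has a definite sign depending on the angle between $s^\infty$ and $s^+$; controlling the error terms uniformly in $n$ — using the sub-linear $o(|t|)$ from Theorem \ref{thm;asym} and the decay of $U$ along unbounded solutions — is the delicate part. An alternative, possibly cleaner route is to fix the asymptotic direction from the start by working with paths constrained to end exactly on the ray $\rr^+ s^+$ (only the endpoint's norm $R_n$ is free) and then show the constraint is not active in the limit because the free minimizer over all of $\rr^d$ with energy $h$ already has the right direction; either way the crux is the same asymptotic-direction identification.
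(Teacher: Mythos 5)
Your overall skeleton (minimize over paths from $x_0$ to far-away points $R_n s^+$, use Propositions \ref{prop;coll-free-1}/\ref{prop;coll-free-2} to get genuine solutions, pass to a limit) matches the paper, but the two load-bearing steps are left unestablished, and the paper closes them by mechanisms you do not supply. First, the energy prescription: you minimize $\A$ with fixed endpoints \emph{and} fixed times $t_n$ "calibrated" so that $h_n\to h$, but nothing in your argument controls $h_n$; the appeal to a "Clarke-duality / Noether-type argument" is not a proof. The paper sidesteps this entirely by minimizing the free-time functional $\A_h(\xi)=\int L(\xi,\dot\xi)+h\,dt$ over all intervals (Definition \ref{dfn;free-time-minimizer}, Lemma \ref{lem;exist-free-minimizer}), so every collision-free minimizer has energy exactly $h$ by a standard Aubry--Mather fact. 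You mention this alternative in passing but do not commit to it, and the two settings require different a priori estimates (the paper's key bound is $\phi_h(x_0,q)\le C_1+C_2|q|$ from Lemma \ref{lem;phh-le}, which then yields $T_n\to\infty$ and uniform local $H^1$ bounds via Cauchy--Schwarz).

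Second, and more seriously, the identification $s^\infty=s^+$ is the crux of the theorem and your proposal only sketches an action-comparison ("bending costs more action than a competitor") that you yourself flag as delicate and do not carry out; making the error terms uniform in $R_n$ there is a genuine difficulty, not a routine check. The paper needs no such comparison. It uses a purely dynamical, quantitative estimate: Lemma \ref{lem;|s-dot|} shows that for any positive-energy solution, once $\dot I\ge 0$ at some $t_0$, the total angular variation satisfies $\int_{t_1}^{t_2}|\dot s|\,dt\le C(t_1-t_0)^{-\al/2}$ with $C$ depending only on $h$, $U_{\max}$ and $r(t_0)$ (this comes from the monotonicity of $\Gamma$ in Lemma \ref{lem;Gamma-dot} plus the Lagrange--Jacobi identity). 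Since each $\gm_n$ ends exactly on the ray, $s_n(T_n)=s^+$, so $|s_n(t_1)-s^+|\le\ep/2$ uniformly in $n$ for $t_1$ large, and the $C^2_{loc}$ convergence $\gm_n\to\gm$ transfers this to the limit. To make that convergence legitimate one also needs the limit path to be collision-free, which the paper gets by proving the limit is itself an $h$-free-time minimizer (Lemma \ref{lem;gm-free-minimizer}, a gluing argument with lower semicontinuity); your "the limit inherits a local minimizing property" gestures at this but is exactly the point that needs the careful gluing. Without the uniform angular-variation bound (or a completed version of your comparison argument), the proof of the asymptotic direction is missing.
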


\begin{rem}
	When $x_0=0$, by $x(t_0)=x_0$, we mean  $\lim_{t \to t_0^+} x(t) =0$.
\end{rem}

\begin{rem}\label{rem:coll-free} From our proof of the theorem in Section \ref{sec;hyp-sol}, one can see that the above result holds as long as the fixed-time minimizers of $\A$ are collision-free.
\end{rem}
For (Q2), the main difficulty is the lack of coercivity, when we approach it with variational methods. In the planar case, one way to force coercivity is to impose appropriate topological constraint. The problem then is minimizers are not always collision-free as shown in \cite{Gordon77} and \cite{BTV13}. Despite of this, we are able to give some partial answer to (Q2). To explain it, set $\rr^2 \simeq \cc$ and define 
$$ \tilde{U}: \rr \to \rr; \; \tht \mapsto U(e^{i\tht}).$$
Denote the set of minimizers of $\tilde{U}$ as 
\begin{equation} \label{eq;min-U}
\mathcal{M}(\tilde{U}) :=\{ \tht_0 \in \rr: \tilde{U}(\tht_0) \le \tilde{U}(\tht), \; \forall \tht \in \rr \}.
\end{equation}

\begin{thm}
\label{thm;bi-hyper} When $d=2$, under the non-degenerate condition  
\begin{equation} \tag{U3}
\label{eq;U-nondeg}  \tilde{U}''(\tht_0) \ne 0,  \; \forall \tht_0 \in \mathcal{M}(\tilde{U}),
\end{equation}
for any $\tht_{\pm} \in \rr$ satisfying $|\tht_+-\tht_-| >\pi$, there is a constant $\bar{\al} =\bar{\al}(U, \tht_{\pm}) \in (0, 2)$, such that for any $\al \in (\bar{\al}, 2)$, there is an $h$-energy bi-hyperbolic solution $x: \rr \to \cc$ satisfying
$$x(t) = \sqrt{2h} |t| e^{i \tht_{\pm}} + o(|t|), \; \text{ as } t \to \pm \infty.$$
\end{thm}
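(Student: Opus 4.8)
The plan is to obtain the bi-hyperbolic solution as a limit of fixed-time minimizers on longer and longer time intervals, with a topological constraint that forces the path to pass around (rather than through) the origin — this constraint is what supplies coercivity in the planar case. Concretely, fix $h>0$ and the asymptotic directions $e^{i\tht_\pm}$ with $|\tht_+-\tht_-|>\pi$. For each large $\tau>0$, choose endpoints $p_\tau := R_\tau e^{i\tht_-}$ and $q_\tau := R_\tau e^{i\tht_+}$ with $R_\tau \sim \sqrt{2h}\,\tau$ (matching the expected asymptotic expansion at times $\mp\tau$), and minimize $\A(\gm; -\tau, \tau)$ over the subset of $H^1_{-\tau,\tau}(p_\tau, q_\tau)$ consisting of paths whose winding data relative to the origin is fixed in the homotopy class determined by $\tht_\pm$ (e.g. requiring the argument of $\gm(t)$, suitably lifted, to run monotonically — or at least to sweep a total angle — from $\tht_-$ to $\tht_+$). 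Because $|\tht_+-\tht_-|>\pi$, any such path of bounded action must be bounded away from small values of $r=|\gm|$ on a macroscopic portion of the interval, so the kinetic term controls the $H^1$ norm and a minimizer $\gm_\tau$ exists in this class. The first genuine step is to make this coercivity precise: a path forced to rotate by more than $\pi$ around the origin while joining two far-away points cannot have arbitrarily small action, and in fact the angular sweep combined with the energy relation \eqref{eq;energy-polar} gives a lower bound on $r$ along the way.

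The second step is to rule out collisions in $\gm_\tau$. Here the topological constraint is a double-edged sword: as in \cite{Gordon77} and \cite{BTV13}, a homotopy constraint can force collisions, so one must check the constrained minimizer does not graze the origin. This is where condition \eqref{eq;U-nondeg} and the restriction $\al$ close to $2$ enter. The idea is a local comparison argument at a putative collision time $t_0$: by Proposition \ref{prop;Sundman-est}, a collision at $t_0$ forces $s(t)\to \text{Cr}(U;\bt_\pm)$, and since we are in the plane, $\text{Cr}(U;\bt_\pm)$ consists of critical points of $\tilde U$; at a minimum of $\tilde U$ the collision would be a "linear" ejection–collision, and at other critical points the Morse index considerations make the arc non-minimizing. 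One then builds a local variation — blowing up a neighborhood of the collision and inserting an arc that detours around the origin with the correct winding — whose action is strictly smaller. The homogeneity exponent $\al$ being near $2$ is exactly what makes the collision action cost comparable to, but beatable by, the detour cost: this is the quantitative heart of the argument and the reason $\bar\al=\bar\al(U,\tht_\pm)$ appears. \emph{I expect this collision-exclusion step to be the main obstacle}, since it must simultaneously respect the topological class, exploit \eqref{eq;U-nondeg}, and produce the threshold $\bar\al$.

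The third step is the limit $\tau\to\infty$. With uniform (on compact time intervals) $C^1$ bounds coming from the energy relation and collision-free-ness, plus uniform lower bounds on $r$ near $t=0$, a diagonal/Arzelà–Ascoli argument extracts a subsequence $\gm_\tau \to x$ converging in $C^1_{loc}(\rr)$ to a solution $x:\rr\to\cc$ of \eqref{eq;aniso-kepler} of energy $h$. One checks $x$ is non-collision (the uniform lower bound on $r$ over $t=0$, propagated by the equation, prevents the limit from being the origin, and a limit of collision-free minimizers that minimize locally remains collision-free by Remark \ref{rem:coll-free}-type reasoning), hence its maximal domain is all of $\rr$ by Theorem \ref{thm;asym} and Lemma \ref{lem;Lag-Jacobi}. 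Finally, one verifies the prescribed asymptotics: since $x$ has positive energy and is defined on all of $\rr$, Theorem \ref{thm;asym}(b) gives $x(t)\simeq \sqrt{2h}\,|t|\,s^\pm + o(|t|)$ for some $s^\pm\in\bs$, and it remains to identify $s^\pm = e^{i\tht_\pm}$. This last identification comes from the topological constraint surviving in the limit: the total angle swept by $\gm_\tau$ is pinned to $\tht_+-\tht_-$ (modulo the bounded contribution near $t=0$), and since each asymptotic end contributes a definite half-turn's worth of angular control, the directions $s^\pm$ must be the prescribed $e^{i\tht_\pm}$ rather than any rotate of them. A short argument using the monotonicity of the argument on the asymptotic ends (valid because $r\to\infty$ and $\dot r\to\sqrt{2h}$, so $\dot s\to 0$ and the path is nearly radial at infinity) closes this gap.
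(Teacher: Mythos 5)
Your overall architecture (constrained minimization on expanding domains, collision exclusion under \eqref{eq;U-nondeg}, diagonal limit, identification of the asymptotic directions) matches the paper's, but there are two genuine gaps. The most serious one concerns compactness of the minimizing sequence as the domains expand, which is the \emph{only} place the hypothesis $|\tht_+-\tht_-|>\pi$ is actually used, and you have misidentified its role. You invoke the angle condition to get a \emph{lower} bound on $r=|\gm_\tau|$ ("bounded away from small values of $r$\dots so the kinetic term controls the $H^1$ norm"), but that only gives existence of each $\gm_\tau$ and is not the danger. The danger is escape: the closest approach $\rho_n=\min_t|\gm_n(t)|$ could tend to $+\infty$, in which case the rescaled paths $\rho_n^{-1}\gm_n(\rho_n\tau)$ converge to a free straight-line motion, whose incoming and outgoing arguments differ by exactly $\pi$; your local limits on compact time intervals would then be empty or would connect directions differing by $\pi$, not by $|\tht_+-\tht_-|$. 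The paper's Lemma \ref{lem;rho-bound} proves $\liminf\rho_n<\infty$ precisely by this dichotomy (an action comparison $2\sqrt{2h}R_n\le\A_h(\gm_n)\le 2\sqrt{2h}(1-\dl)R_n$ when $\rho_n/R_n\to d>0$, and the straight-line blow-down when $\rho_n/R_n\to 0$, each contradicting $|\tht_+-\tht_-|>\pi$). Nothing in your proposal supplies a uniform upper bound on the closest approach, so the Arzel\`a--Ascoli extraction in your third step is not justified.

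The second gap is your use of \emph{fixed-time} minimizers on $[-\tau,\tau]$ with $R_\tau\sim\sqrt{2h}\,\tau$: a fixed-time minimizer has some energy determined implicitly by $(\tau,R_\tau)$, not $h$, and identifying the limiting energy as $h$ from this scaling is a nontrivial claim you do not prove. The paper avoids this entirely by minimizing the free-time functional $\A_h=\A+h(t_2-t_1)$ over all time intervals (Definition \ref{dfn;free-time-minimizer} and Lemma \ref{lem;free-minimizer-top}), for which any collision-free minimizer automatically has energy exactly $h$. Finally, your collision-exclusion step is only a sketch; the paper simply invokes \cite[Theorem 2]{BTV13} (Proposition \ref{prop;BTV}), and to apply it one also needs the confinement $\tht(t)\in[\psi_1,\psi_2]$ between adjacent global minimizers of $\tilde U$, which follows from minimality and which your argument omits. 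Your closing identification of $s^\pm=e^{i\tht_\pm}$ is in the right spirit but needs the uniform-in-$n$ tail estimate of Lemma \ref{lem;|s-dot|} rather than a per-path monotonicity remark.
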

\begin{rem}
The constant $\bar{\al}$ was found in \cite{BTV13}, for the detail see Proposition \ref{prop;BTV}. 
\end{rem}
 
While condition \eqref{eq;U-nondeg} is relatively weak, the constant $\bar{\al}$ could be larger than 1, which rules out the most important Newtonian potential. On the other hand, for Gutzwiller's anisotropic Kepler problem, we are able to obtain a result, which holds for all $\al \in (0, 2)$.   

\begin{thm}
\label{thm;bi-hyperbolic-aniso} When $d=2$, under the conditions \eqref{eq;U-aniso} and  
\begin{equation} \label{U;spiral}  \frac{m_2}{m_1} > 1 + \frac{(2-\al)^2}{8\al}
\tag{U4} 
\end{equation} 
for any pair of $s^{\pm} = s^{\pm}_1+ is^{\pm}_2 \in \cc$ with $|s^{\pm}|=1$, if one of the following conditions holds, 
\begin{enumerate}
	\item[(i).] $s^+ \ne -s^-$, $s^-_1 \ne \pm 1$ and $s^-_2 s^+_2 \le 0$;
	\item[(ii).] $s^+ \ne -s^-$, $s^-_1 = \pm 1$, 
\end{enumerate}
there exists at least one $h$-energy bi-hyperbolic solution $x: \rr \to \cc$ satisfying
$$x(t) = \sqrt{2h} |t| s^{\pm} + o(|t|), \; \text{as } t \to \pm \infty.$$

\end{thm}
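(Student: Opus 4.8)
The plan is to obtain the bi-hyperbolic solution as a $C^2_{loc}$--limit of fixed--energy action minimizers along curves whose two endpoints recede to infinity along the prescribed rays $\rr_+ s^{\pm}$, the minimization being performed inside a carefully chosen free--homotopy class of curves in $\cc\setminus\{0\}$. Fix $h>0$. For $R>0$ large set $p_R=Rs^-$, $q_R=Rs^+$, and for $\gm\in H^1_{0,\tau}(p_R,q_R)$, $\tau>0$, consider the free--time action $\A_h(\gm;\tau):=\A(\gm;0,\tau)+h\tau$; recall (Maupertuis' principle) that a joint minimizer of $\A_h$ in $(\gm,\tau)$ is, after an affine time rescaling, an $h$--energy solution of \eqref{eq;aniso-kepler}, equivalently a shortest curve for the Jacobi length $\int\sqrt{2(U(\gm)+h)}\,|\dot\gm|\,dt$, whose weight is $\ge\sqrt{2h}>0$. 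I would \textbf{not} minimize over all of $H^1_{0,\tau}(p_R,q_R)$, since the unconstrained minimizer degenerates to essentially the straight chord from $p_R$ to $q_R$, whose distance to the origin diverges with $R$ (here the exclusion of $s^+=-s^-$ is natural, as in that case the chord passes through the singularity). Instead I would impose a winding--type constraint $\K=\K(s^{\pm})$ --- roughly, that the closed loop made of $\gm$ and a suitable circular arc from $q_R$ back to $p_R$ have a prescribed nonzero winding number about $0$, which forces $\gm$ to the far side of the origin and hence keeps its perihelion bounded. The hypotheses $s^+\ne -s^-$ together with case (i) or (ii) are precisely what make available such a class $\K$ with the three properties I will use: it is nonempty with finite Jacobi length, it is closed under uniform limits of collision--free curves, and near the origin it is ``locally trivial'', in that a short near--collision sub--arc can be replaced inside $\K$ by a collision--free detour.

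Granting such a $\K$, I would proceed in three steps. \emph{Existence and collision--freeness of a minimizer $\gm_R$:} Jacobi lengths in $\K$ are bounded (from below by $\sqrt{2h}\,|p_R-q_R|$, from above by a reference curve), so arc--length parametrization, Arzel\`a--Ascoli, and lower semicontinuity of length give a minimizer $\gm_R$ in $\overline{\K}$. To exclude a minimizer touching the origin, note that $\gm_R$ is a fixed--time local minimizer of $\A$ among nearby curves in $\K$, so Proposition~\ref{prop;coll-free-2} (which already assumes \eqref{eq;U-aniso}) provides a local averaging estimate forbidding collisions --- but one must check the modification stays in $\K$. Here \eqref{U;spiral} is decisive: it is exactly Devaney's regime \cite{Dev78a} in which the linearization of McGehee's blown--up flow at the collision equilibria has non--real eigenvalues, so collision--ejection arcs spiral infinitely around the origin, and a near--collision sub--arc of $\gm_R$ can therefore be rounded off \emph{inside} the locally trivial class $\K$ with a strict decrease of the action, by the Marchal--type averaging underlying Proposition~\ref{prop;coll-free-2} (see \cite{Mc02}). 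Hence $\gm_R$ is collision--free and, after reparametrization, a genuine $h$--energy solution $x_R$ on a finite interval. \emph{Uniform perihelion bounds:} translating time so $|x_R(0)|=\min_t|x_R(t)|=:r_R$, the constraint $\K$ gives $r_R\le M$ with $M$ independent of $R$ (the path must reach the far side of $0$), while $r_R\ge r_0>0$: if $r_R\to 0$ along a subsequence, then by \eqref{U;spiral} the solutions would spiral inside $\{|x|\le\dl\}$ a number of times tending to infinity, so the action of that near--origin portion would eventually exceed the fixed cost of a collision--free detour in $\K$, contradicting minimality. \emph{Passage to the limit:} with $r_0\le r_R\le M$, the energy relation $\ey|\dot x_R|^2=h+U(x_R)$ and \eqref{eq;aniso-kepler} give uniform $C^2$ bounds on each compact time interval, so along a subsequence $x_R\to x_\infty$ in $C^2_{loc}(\rr,\cc\setminus\{0\})$, an $h$--energy solution on all of $\rr$. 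Being nonconstant it is unbounded forwards and backwards, so Theorem~\ref{thm;asym}(b) gives $x_\infty(t)\simeq\sqrt{2h}\,|t|\,\bar s^{\pm}+o(|t|)$ for some $\bar s^{\pm}\in\bs$; comparing $x_R$ in the far field with a competitor exiting straight along $\rr_+ s^{\pm}$, equivalently using that $\K$ records the two asymptotic directions and is stable under the limit, forces $\bar s^{\pm}=s^{\pm}$. This $x_\infty$ is the desired bi--hyperbolic solution.

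The step I expect to be the genuine obstacle is this collision analysis together with the two perihelion bounds as $R\to\infty$: it requires marrying the local Marchal/averaging estimate of Proposition~\ref{prop;coll-free-2} with Devaney's infinite--spiraling phenomenon governed by \eqref{U;spiral}, and it is exactly there that the technical case split (i)--(ii), together with $s^+\ne -s^-$, is needed, so as to have a topological constraint that is simultaneously coercive for the perihelion, preserved under $C^2_{loc}$ convergence, and compatible with the near--collision surgery.
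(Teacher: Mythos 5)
Your overall skeleton (free-time $h$-action minimizers between $R_ns^-$ and $R_ns^+$ inside a winding-type class, collision exclusion, uniform perihelion control, $C^2_{loc}$ limit) matches the paper's, but three steps you lean on are either missing or would fail as stated. First, the class $\K$ is never constructed: the entire content of hypotheses (i)--(ii) is that, using the reflection symmetries of \eqref{eq;U-aniso}, one can choose a branch $\tht_\pm$ of the endpoint angles satisfying \emph{simultaneously} $|\tht_+-\tht_-|>\pi$ (needed for the perihelion upper bound, Lemma \ref{lem;rho-bound}) and membership in one of the explicit angle ranges of Proposition \ref{prop;coll-free-top-2} (needed for collision exclusion inside the class). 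Saying the hypotheses ``are precisely what make $\K$ available'' defers the one place where (i)--(ii) actually enter. Second, your collision exclusion is not the paper's and is not substantiated: Proposition \ref{prop;coll-free-2} is proved by a Montgomery-type deformation in a fixed direction $\sg$, not by Marchal averaging \cite{Mc02}, and in the constrained setting the paper must split by the limiting collision direction. For arcs asymptotic to $\pm 1$ it invokes Lemma \ref{lem;HY}, i.e.\ the infinite Morse index result of \cite{HY18} under \eqref{U;spiral} -- this is the rigorous counterpart of your appeal to Devaney's spiraling \cite{Dev78a}, and it cannot be replaced by a ``rounding off inside $\K$'' because no admissible deformation direction is available there. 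For arcs asymptotic to $\pm i$ it chooses $\sg$ case by case so that the deformed path stays in the class (Lemma \ref{lem;homothetic-not-minimizer}, Case 2, and the case list in Proposition \ref{prop;coll-free-top-2}).

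Third, your uniform lower perihelion bound $r_R\ge r_0>0$ rests on the claim that a near-collision arc spiraling many times has action exceeding a fixed detour cost; this is false in the relevant regime, since by the Sundman-type estimates (Proposition \ref{prop;Sundman-est}) the action of an arc shadowing a collision--ejection orbit stays \emph{bounded} -- successive turns occur at geometrically shrinking radii and contribute a summable amount. The paper needs no such bound: it only proves $\liminf\rho_n<\infty$, passes to a weak $H^1_{loc}$ limit, shows the limit is itself a constrained free-time minimizer on every $[-T,T]$ (Lemma \ref{lem;gm-free-minimizer-2}, via the gluing/lower-semicontinuity argument), and then applies Proposition \ref{prop;coll-free-top-2} \emph{to the limit} to conclude it is collision-free. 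Similarly, identifying the asymptotic directions requires the uniform tail estimate of Lemma \ref{lem;|s-dot|} on $\int|\dot s_n|$, not merely ``stability of $\K$ under the limit''. With these three repairs your argument becomes the paper's proof; as written, it has genuine gaps at exactly the points you flagged as the obstacles.
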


\section{Asymptotic analysis of positive energy solutions}

Throughout this section, let $x(t)= r(t)s(t)$ be a positive $h$-energy solution with maximal domain $(T^-, T^+)$. We introduce two auxiliary functions that will be needed in our proofs.  
\begin{equation}
\label{eq;I-Gamma} I(t) = I(x(t)) = r^2(t); \; \; \Gamma(t) = \Gamma (x(t)) = \frac{1}{2}r^{\al}(t)(2h - \dot{r}^2(t)). 
\end{equation} 
\begin{lem}[Lagrange-Jacobi identity]
\label{lem;Lag-Jacobi} $ \forall t \in (T^-, T^+)$, $\ddot{I}(t) = 4h + 2(2-\al)U(x(t)) >4h$.
\end{lem}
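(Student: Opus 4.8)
The plan is to compute $\ddot I$ directly in Cartesian coordinates and then substitute the energy relation. Since $I(t) = r^2(t) = |x(t)|^2 = \langle x(t), x(t)\rangle$, differentiating twice gives $\dot I = 2\langle x, \dot x\rangle$ and $\ddot I = 2|\dot x|^2 + 2\langle x, \ddot x\rangle$. Using the equation of motion \eqref{eq;aniso-kepler}, the second term becomes $2\langle x, \nabla U(x)\rangle$.

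The next step is to invoke Euler's identity for homogeneous functions. Condition \eqref{eq;U-homo} says $U$ is positively homogeneous of degree $-\al$, so $\langle x, \nabla U(x)\rangle = -\al\, U(x)$ for all $x \ne 0$. (Equivalently, one can read this off from the polar form $U(x) = |x|^{-\al} U(x/|x|)$ by differentiating in the radial direction; the first line of \eqref{eq;aniso-keper-polar} together with \eqref{eq;til-nb} encodes the same fact.) Hence $\ddot I = 2|\dot x|^2 - 2\al\, U(x)$.

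Finally I would eliminate $|\dot x|^2$ using conservation of energy \eqref{eq;energy}: since $h = \tfrac12|\dot x|^2 - U(x)$, we have $|\dot x|^2 = 2h + 2U(x)$, and therefore
\begin{equation*}
\ddot I(t) = 2\big(2h + 2U(x(t))\big) - 2\al\, U(x(t)) = 4h + 2(2-\al)\,U(x(t)).
\end{equation*}
The strict inequality $\ddot I(t) > 4h$ is then immediate: $\al \in (0,2)$ forces $2-\al>0$, and condition \eqref{eq;U-homo} (with $U|_{\bs}$ taking values in $\rr^+$ and $U$ homogeneous) guarantees $U(x(t)) > 0$ along the solution, so the correction term $2(2-\al)U(x(t))$ is strictly positive.

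There is essentially no serious obstacle here — it is a one-line computation once Euler's identity and energy conservation are in hand. The only points requiring a modicum of care are (i) making sure the homogeneity degree is tracked with the right sign (degree $-\al$, not $\al$), and (ii) noting that the solution stays away from the origin on its maximal interval, so that $\nabla U(x(t))$ and $U(x(t))$ are well-defined and $U(x(t))>0$ everywhere on $(T^-,T^+)$; this is exactly why the stated inequality is strict rather than merely $\ge$.
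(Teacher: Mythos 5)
Your proof is correct and follows exactly the same route as the paper: differentiate $I=\langle x,x\rangle$ twice, apply Euler's identity for the $(-\al)$-homogeneous potential to rewrite $\langle x,\nabla U(x)\rangle=-\al U(x)$, and eliminate $|\dot x|^2$ via the energy relation. Your added remarks on the sign of the homogeneity degree and the positivity of $U$ along the orbit are correct but only make explicit what the paper leaves implicit.
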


\begin{proof}
By a direct computation, $\dot{I} = 2 \langle \dot{x}, x \rangle$. Then for $\al \in (0, 2)$,
$$
\begin{aligned}
 \ddot{I}/2 & = |\dot{x}|^2 + \langle x, \ddot{x} \rangle = |\dot{x}|^2 + \langle x, \nabla U(x) \rangle \\
 			  & = 2 ( h + U(x)) - \al U(x) = 2h + (2-\al) U(x) >2h.
 \end{aligned} 
$$
\end{proof}

\begin{lem}
\label{lem;I-ge} For any $T^-<t_0< t_1 <T^+$, 
\begin{equation}
\label{eq;I-ge} I(t_1) \ge 2h(t_1 -t_0)^2 + \dot{I}(t_0)(t_1 -t_0) + I(t_0).  
\end{equation}
\end{lem}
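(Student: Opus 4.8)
The plan is to obtain \eqref{eq;I-ge} simply by integrating the Lagrange–Jacobi inequality of Lemma~\ref{lem;Lag-Jacobi} twice over $[t_0,t_1]$. Note first that $[t_0,t_1]$ is a compact subinterval of the maximal domain $(T^-,T^+)$, so on it $x(t)=r(t)s(t)$ is a genuine classical solution, $U(x(t))>0$ is continuous, and $\ddot I(t)=4h+2(2-\al)U(x(t))$ is a well-defined continuous function which is strictly larger than $4h$.

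The cleanest way I would carry this out is to introduce the auxiliary function
$$ g(t) := I(t) - 2h(t-t_0)^2 - \dot I(t_0)(t-t_0) - I(t_0), \qquad t\in[t_0,t_1]. $$
By construction $g(t_0)=0$ and $\dot g(t_0)=\dot I(t_0)-\dot I(t_0)=0$, while $\ddot g(t)=\ddot I(t)-4h=2(2-\al)U(x(t))>0$ by Lemma~\ref{lem;Lag-Jacobi}. Hence $\dot g$ is strictly increasing on $[t_0,t_1]$, so $\dot g(t)>\dot g(t_0)=0$ for $t\in(t_0,t_1]$; consequently $g$ is strictly increasing there and $g(t)>g(t_0)=0$ for all $t\in(t_0,t_1]$. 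Taking $t=t_1$ gives $g(t_1)>0$, which is exactly the inequality \eqref{eq;I-ge} --- in fact with strict inequality, and the stated $\ge$ follows a fortiori.

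Equivalently, one can integrate directly: from $\ddot I(s)>4h$ one gets $\dot I(t)-\dot I(t_0)=\int_{t_0}^{t}\ddot I(s)\,ds>4h(t-t_0)$, and then $I(t_1)-I(t_0)=\int_{t_0}^{t_1}\dot I(t)\,dt>\int_{t_0}^{t_1}\bigl(\dot I(t_0)+4h(t-t_0)\bigr)\,dt=\dot I(t_0)(t_1-t_0)+2h(t_1-t_0)^2$. I do not expect any genuine obstacle here; the only point worth a moment's care is that the estimate is applied on an interval $[t_0,t_1]\subset(T^-,T^+)$ where the solution --- and hence Lemma~\ref{lem;Lag-Jacobi} --- is valid pointwise, so the two integrations are legitimate. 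One might also add as a remark that the argument shows $I$ is strictly convex along any positive-energy solution, which is the geometric content behind the estimate and is presumably what forces $T^+$ and $T^-$ not to both be finite (as noted after Theorem~\ref{thm;asym}).
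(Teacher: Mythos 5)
Your proof is correct and is essentially the paper's own argument: the paper likewise integrates $\ddot I\ge 4h$ once to get $\dot I(t)\ge 4h(t-t_0)+\dot I(t_0)$ and then integrates again over $[t_0,t_1]$, which is exactly your second ("equivalently") paragraph; the auxiliary function $g$ is just a repackaging of the same double integration. The observation that the inequality is in fact strict (since $U>0$ and $\al\in(0,2)$) is a harmless bonus.
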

\begin{proof}
By the above lemma, $\ddot{I} \ge 4h$, as a result for any $t_0 \le t \le t_1$, 
\begin{equation}
\label{eq;I-dot} \dot{I}(t) \ge 4ht - 4h t_0 + \dot{I}(t_0).
\end{equation}
Integrating both sides of the above inequality from $t_0$ to $t_1$ gives us the desired result. 
\end{proof}

\begin{lem}
\label{lem;Gamma-dot} For any $t \in (T^-, T^+)$, $\dot{\Gamma}(t) = -\frac{2 -\al}{2} r^{\al +1} \dot{r} |\dot{s}|^2$.
\end{lem}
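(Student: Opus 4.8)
The plan is to prove the identity by direct differentiation of $\Gamma$, using the radial equation of motion in \eqref{eq;aniso-keper-polar}, Euler's identity for the $\al$-homogeneous potential, and the energy relation \eqref{eq;energy-polar} to eliminate the second derivative $\ddot r$ and the potential term $U(s)$.

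First I would rewrite $\Gamma = h r^{\al} - \ey r^{\al}\dot r^2$ and differentiate in $t$ to get
\begin{equation*}
\dot{\Gamma} = \al h r^{\al-1}\dot r - \tfrac{\al}{2} r^{\al-1}\dot r^3 - r^{\al}\dot r\,\ddot r.
\end{equation*}
Next I would treat the term $r^{\al}\dot r\,\ddot r$: from the first line of \eqref{eq;aniso-keper-polar} together with the Euler relation $\langle \nabla U(s), s\rangle = -\al U(s)$ (valid since $U$ is $\al$-homogeneous), one has $\ddot r = r|\dot s|^2 - \al r^{-(\al+1)} U(s)$, hence $r^{\al}\dot r\,\ddot r = r^{\al+1}\dot r |\dot s|^2 - \al r^{-1}\dot r\, U(s)$. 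Finally, to remove $U(s)$ I would invoke the energy identity \eqref{eq;energy-polar}, which gives $r^{-\al}U(s) = \ey\dot r^2 + \ey r^2|\dot s|^2 - h$, i.e. $U(s) = r^{\al}\big(\ey\dot r^2 + \ey r^2|\dot s|^2 - h\big)$. Substituting this back and collecting terms, the monomials $h r^{\al-1}\dot r$ and $r^{\al-1}\dot r^3$ cancel in pairs and only the $r^{\al+1}\dot r|\dot s|^2$ contribution survives, with coefficient $-(1-\tfrac{\al}{2}) = -\tfrac{2-\al}{2}$, which is exactly the claimed formula.

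I do not expect a genuine obstacle here: the argument is a short algebraic computation once the three substitutions are in place. The only points deserving care are the sign in Euler's identity $\langle\nabla U(s),s\rangle = -\al U(s)$ and the consistent elimination of $U(s)$ via \eqref{eq;energy-polar}; after those, the cancellation is automatic and the $r|\dot s|^2$ term in $\ddot r$ is what produces the right-hand side.
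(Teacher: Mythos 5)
Your computation is correct; I checked the algebra and the cancellations work out exactly as you describe, giving the coefficient $-\tfrac{2-\al}{2}$. However, your route is genuinely different from the paper's. The paper first substitutes the energy identity \eqref{eq;energy-polar} into the definition of $\Gamma$ to obtain $\Gamma = \ey r^{2+\al}|\dot s|^2 - U(s)$, differentiates that expression, and then eliminates the term $r^{2+\al}\langle \dot s, \ddot s\rangle$ by pairing the \emph{second} (tangential) equation of \eqref{eq;aniso-keper-polar} with $\dot s$; the two occurrences of $\langle \nabla U(s), \dot s\rangle$ cancel and the result drops out. You instead differentiate $\Gamma$ in its original form $h r^{\al} - \ey r^{\al}\dot r^2$, eliminate $\ddot r$ via the \emph{first} (radial) equation together with Euler's identity $\langle \nabla U(s), s\rangle = -\al U(s)$, and only then invoke the energy identity to remove $U(s)$. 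The trade-off: the paper's version never needs Euler's identity and keeps the computation entirely in the tangential variables, at the cost of one preliminary substitution; yours stays with the radial variables, which makes it more transparent why the sign of $\dot r$ controls the monotonicity of $\Gamma$ (the fact actually used later in Lemma \ref{lem;|s-dot|} and Proposition \ref{prop;r-s-lim}), but requires juggling three substitutions whose cancellations must all be tracked. Both are complete proofs of the same identity.
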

\begin{proof}
By the energy identity \eqref{eq;energy-polar}, 
\begin{equation}
\label{eq;Gamma} \Gamma(t) = \ey r^{2 +\al} |\dot{s}|^2 - U(s) 
\end{equation}
Then a direct computation shows
$$ \dot{\Gamma} = \frac{2 +\al}{2} r^{\al +1} \dot{r} |\dot{s}|^2 + r^{2+\al} \langle \dot{s}, \ddot{s} \rangle - \langle \nabla U(s), \dot{s} \rangle. $$
Meanwhile by taking an inner product of the second equation in \eqref{eq;aniso-keper-polar} with $\dot{s}$, we get 
$$ r^{2 +\al} \langle \dot{s}, \ddot{s} \rangle = \langle \nabla U(s), s \rangle - 2 r^{\al +1} \dot{r} |\dot{s}|^2. $$
Plugging this into the previous equation gives us the desired result. 
\end{proof}

\begin{lem}
\label{lem;|s-dot|} If $\dot{I}(t_0) \ge 0$ for some $t_0 \in (T^-, T^+)$, then for any $t_0 < t_1 < t_2 < T^+$, 
\begin{equation}
\label{eq;int-s-dot} \int_{t_1}^{t_2} |\dot{s}| \,dt \le  \frac{2}{\al} \frac{\sqrt{2h r^{\al}(t_0)+2U_{max}}}{(2h )^{\frac{2 +\al}{4}}}(t_1-t_0)^{-\frac{\al}{2}} .
\end{equation}
\end{lem}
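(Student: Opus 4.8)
The plan is to derive a pointwise upper bound for $|\dot s(t)|$ on $[t_1,t_2]$ that decays like a power of $(t-t_0)$, and then integrate it. Two ingredients drive the estimate: a lower bound of the form $r(t)\ge\sqrt{2h}\,(t-t_0)$ and the monotonicity of the auxiliary function $\Gamma$, both of which follow from the hypothesis $\dot I(t_0)\ge 0$.

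First I would invoke \eqref{eq;I-dot}, which already appeared in the proof of Lemma \ref{lem;I-ge}: it gives $\dot I(t)\ge 4h(t-t_0)+\dot I(t_0)\ge 4h(t-t_0)\ge 0$ for all $t\in[t_0,T^+)$. Since $\dot I=2r\dot r$ and $r>0$, this forces $\dot r\ge 0$ on $[t_0,T^+)$; integrating the inequality once more (equivalently, Lemma \ref{lem;I-ge} together with $I(t_0)\ge 0$) yields $r(t)\ge\sqrt{2h}\,(t-t_0)$ for $t\ge t_0$. Next, because $\al\in(0,2)$ and $\dot r\ge 0$ on $[t_0,T^+)$, Lemma \ref{lem;Gamma-dot} shows $\dot\Gamma(t)=-\tfrac{2-\al}{2}r^{\al+1}\dot r\,|\dot s|^2\le 0$ there, so $\Gamma$ is non-increasing on $[t_0,T^+)$; hence, using the definition \eqref{eq;I-Gamma} and $\dot r^2(t_0)\ge 0$, we get $\Gamma(t)\le\Gamma(t_0)=\tfrac12 r^{\al}(t_0)\big(2h-\dot r^2(t_0)\big)\le h\,r^{\al}(t_0)$ for all $t\in[t_0,T^+)$.

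With these in hand, the identity \eqref{eq;Gamma} gives $\tfrac12 r^{2+\al}|\dot s|^2=\Gamma(t)+U(s)\le h\,r^{\al}(t_0)+U_{max}$, where $U_{max}:=\max_{\bs}U$, and combining with the lower bound on $r$ produces
\begin{equation*}
|\dot s(t)|\ \le\ \frac{\sqrt{2h\,r^{\al}(t_0)+2U_{max}}}{r^{\frac{2+\al}{2}}(t)}\ \le\ \frac{\sqrt{2h\,r^{\al}(t_0)+2U_{max}}}{(2h)^{\frac{2+\al}{4}}}\,(t-t_0)^{-\frac{2+\al}{2}},\qquad t\in[t_0,T^+).
\end{equation*}
Integrating this from $t_1$ to $t_2$ and using that $\tfrac{2+\al}{2}>1$, so that $\int_{t_1}^{t_2}(t-t_0)^{-\frac{2+\al}{2}}\,dt=\tfrac{2}{\al}\big[(t_1-t_0)^{-\al/2}-(t_2-t_0)^{-\al/2}\big]\le\tfrac{2}{\al}(t_1-t_0)^{-\al/2}$, gives exactly \eqref{eq;int-s-dot}. (Here the upper limit in the statement should read $t_2<T^+$.)

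I do not anticipate a genuine obstacle: the content is entirely in assembling the earlier lemmas in the right order. The one point requiring care is the sign bookkeeping — that $\dot I(t_0)\ge 0$ propagates to $\dot r\ge 0$ on the whole interval $[t_0,T^+)$, which is precisely what makes the lower bound on $r$ and the decay of $\Gamma$ cooperate — together with the observation that the exponent $\tfrac{2+\al}{2}$ is strictly larger than $1$, so the time integral converges and decays in $t_1-t_0$.
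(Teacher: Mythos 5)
Your proof is correct and follows essentially the same route as the paper's: derive $\dot r\ge 0$ from $\dot I(t_0)\ge 0$ via \eqref{eq;I-dot}, use Lemma \ref{lem;Gamma-dot} to get $\Gamma(t)\le\Gamma(t_0)\le h\,r^{\al}(t_0)$, combine with \eqref{eq;Gamma} and the lower bound $r(t)^2\ge 2h(t-t_0)^2$ from Lemma \ref{lem;I-ge}, and integrate the resulting pointwise bound $|\dot s(t)|\lesssim (t-t_0)^{-1-\al/2}$. You are also right that the condition $t_2<T^-$ in the statement is a typo for $t_2<T^+$.
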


\begin{proof}
Notice that $\dot{r}(t) >0$, $\forall t >t_0$, as the assumption and \eqref{eq;I-dot} imply
$$ \dot{I}(t) = 2r(t) \dot{r}(t) \ge 4h(t-t_0) + \dot{I}(t_0) >0, \; \forall t >t_0. $$
Then Lemma \ref{lem;Gamma-dot} implies $\dot{\Gamma}(t) \le 0$, $\forall t >t_0$. As a result, when $r_0 = r(t_0)$,
$$ \Gamma(t) \le \Gamma(t_0) \le h r^{\al}_0, \; \forall t > t_0. $$
Let $C_1 = 2U_{max}$, then the above inequality and \eqref{eq;Gamma} imply
$$ r^{2 +\al}(t) |\dot{s}(t)|^2 \le 2\big(h r_0^{\al} + U(s(t)) \big) \le 2h r_0^{\al} + C_1. $$
Meanwhile according to \eqref{eq;I-ge}, 
$$ I(t) =r^2(t) \ge 2h(t-t_0)^2, \; \forall t >t_0. $$
Combining these estimates, we get 
$$ |\dot{s}(t)| \le \left( \frac{2h r_0^{\al}+C_1}{r^{2 +\al}(t)} \right)^{\ey} \le \frac{\sqrt{2h r_0^{\al}+C_1}}{(2h(t-t_0)^2)^{\frac{2+\al}{4}}} \le  \frac{\sqrt{2h r_0^{\al}+C_1}}{(2h )^{\frac{2+\al}{4}}} (t-t_0)^{-1 -\frac{\al}{2}}. $$
The result now follows from a direct integration. 
\end{proof}

With the above lemmas, we can prove the following asymptotic estimates. 
\begin{prop} \label{prop;r-s-lim} 
When $T^{\pm} = \pm \infty$, the following asymptotic estimates hold. 
\begin{enumerate}
\item[(a).] $r(t) = \sqrt{2h} |t| + o(|t|)$, as $t \to \pm \infty$.
\item[(b).] there is a $s^{\pm} \in \bs$, such that $\lim_{t \to \pm \infty} s(t) = s^{\pm}$. 
\end{enumerate}
\end{prop}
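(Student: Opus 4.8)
I would prove the statement as $t \to +\infty$; the case $t\to -\infty$ follows verbatim after replacing $x(t)$ by the time-reversed solution $t \mapsto x(-t)$, which is again a positive $h$-energy solution with forward domain all of $[0,+\infty)$. Write $U_{max} := \max_{\bs} U$. The first move is to locate a time $t_0$ after which the solution is ``expanding''. By Lemma \ref{lem;Lag-Jacobi}, $\ddot I(t) \ge 4h > 0$ on all of $(T^-,+\infty)$, so $\dot I$ is strictly increasing with $\dot I(t)\to +\infty$; fix $t_0$ with $\dot I(t_0)\ge 0$. Then \eqref{eq;I-dot} gives $\dot I(t) = 2r(t)\dot r(t) \ge 4h(t-t_0) > 0$, hence $\dot r(t) > 0$, for every $t > t_0$, while Lemma \ref{lem;I-ge} gives $r^2(t) \ge 2h(t-t_0)^2 + I(t_0) \to +\infty$, so $r(t)\to +\infty$ and $r^{-\al}(t)\to 0$.

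For part (a) I would exploit the monotonicity of $\Gamma$. Since $\dot r > 0$ on $(t_0,+\infty)$, Lemma \ref{lem;Gamma-dot} gives $\dot\Gamma = -\tfrac{2-\al}{2}\,r^{\al+1}\dot r\,|\dot s|^2 \le 0$ there, while \eqref{eq;Gamma} shows $\Gamma = \tfrac12 r^{2+\al}|\dot s|^2 - U(s) \ge -U_{max}$. Hence $\Gamma$ is non-increasing and bounded below on $(t_0,+\infty)$, so it converges to a finite limit. But $\Gamma(t) = \tfrac12 r^{\al}(t)\bigl(2h-\dot r^2(t)\bigr)$ with $r^{\al}(t)\to +\infty$, so $2h-\dot r^2(t)\to 0$, and since $\dot r > 0$ this forces $\dot r(t)\to \sqrt{2h}$. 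Applying L'H\^{o}pital's rule to $r(t)/t$ (numerator and denominator both tending to $+\infty$) yields $r(t)/t \to \sqrt{2h}$, i.e.\ $r(t) = \sqrt{2h}\,|t| + o(|t|)$.

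For part (b) I would feed the same $t_0$ into Lemma \ref{lem;|s-dot|}: for all $t_0 < t_1 < t_2$,
$$ \int_{t_1}^{t_2}|\dot s|\,dt \le \frac{2}{\al}\,\frac{\sqrt{2h\,r^{\al}(t_0)+2U_{max}}}{(2h)^{\frac{2+\al}{4}}}\,(t_1-t_0)^{-\frac{\al}{2}}. $$
The right-hand side is independent of $t_2$ and, since $\al>0$, tends to $0$ as $t_1\to +\infty$; hence $\int_{t_1}^{+\infty}|\dot s|\,dt \to 0$, so $s(\cdot)$ is Cauchy at $+\infty$ and $\lim_{t\to +\infty}s(t) =: s^+$ exists, with $|s^+|=1$ since $|s(t)|\equiv 1$.

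I do not expect a serious obstacle: every ingredient is already supplied by Lemmas \ref{lem;Lag-Jacobi}--\ref{lem;|s-dot|}. The one step deserving care is part (a): convergence of $\dot r$ is not given a priori and must be extracted, and the clean route is the one above, pitting boundedness of the monotone quantity $\Gamma$ against the blow-up of $r^{\al}$. It is also worth stating explicitly that the bound in Lemma \ref{lem;|s-dot|} is uniform in the upper endpoint $t_2$, which is exactly what makes the tail of $\int|\dot s|$ vanish and hence yields convergence of $s$.
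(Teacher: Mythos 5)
Your proof is correct, and part (b) is in substance identical to the paper's: the paper also derives $\tfrac12 r^{2+\al}|\dot s|^2\le C$ from the monotonicity of $\Gamma$ and integrates $|\dot s|\lesssim t^{-\frac{2+\al}{2}}$; you simply invoke Lemma \ref{lem;|s-dot|}, which packages exactly that estimate, and correctly flag that its bound is uniform in $t_2$. The one genuine difference is in part (a). The paper does not use $\Gamma$ there at all: it applies L'H\^opital twice to $I/t^2$, using the Lagrange--Jacobi identity $\ddot I = 4h + 2(2-\al)U(x)\to 4h$ to get $\dot I/(2t)\to 2h$, and then extracts $\dot r\to\sqrt{2h}$ from $\dot I = 2r\dot r$ by dividing by $(I/t^2)^{1/2}\to\sqrt{2h}$. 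You instead pit the convergence of the monotone, bounded-below quantity $\Gamma=\tfrac12 r^{\al}(2h-\dot r^2)$ against $r^{\al}\to\infty$ to force $\dot r^2\to 2h$ directly, and then use L'H\^opital only once. Both routes are sound and rest on the same section's lemmas; yours gives the convergence of $\dot r$ a bit more cleanly (no division step whose justification the paper leaves implicit), at the cost of bringing $\Gamma$ into part (a), where the paper reserves it for part (b). No gaps.
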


\begin{proof}
We only give the details for $T^+=+\infty$.

(a). By Lemma \ref{lem;Lag-Jacobi} and \ref{lem;I-ge}, $r^2(t) = I(t) \to \infty$ and
\begin{equation}
\label{eq;ddot-dot-I} \lim_{t \to \infty} \frac{I}{t^2} = \lim_{t \to \infty} \frac{\dot{I}}{2t} = \lim_{t \to \infty} \frac{\ddot{I}}{2} = 2h+  \lim_{t \to \infty} 2(2-\al)\frac{U(s)}{r^{\al}} = 2h,
\end{equation} 

As $\dot{I} = 2 r \dot{r}$, plugging this into the above identities, we get 
$$2h = \lim_{t \to \infty} \frac{r \dot{r}}{t} = \lim_{t \to \infty} \left(  t^{-2} I\right)^{\ey} \dot{r} = \sqrt{2h} \lim_{ t \to \infty} \dot{r}. $$
Since $\lim_{t \to \infty} \sqrt{I/t^2} = \sqrt{2h}$, we get 
$$ \lim_{t \to \infty} \dot{r} = \lim_{t \to \infty} \frac{r}{t} = \sqrt{2h}. $$
This gives us the desired property. 

(b). By Lemma \ref{lem;Gamma-dot}, $\Gamma(t)$ is non-increasing for $t$ large enough, this implies   
\begin{equation}
\label{eq;Gm-lim} \lim_{t \to \infty} \Gamma(t) = \lim_{t \to \infty} \ey r^{2 +\al}|\dot{s}|^2 - U(s) = C_1,
\end{equation}
and $\ey r^{2 +\al} |\dot{s}|^2 \le C_2$, for some constant $C_2 >0$. Then for $t$ large enough
\begin{equation}
\label{eq;s-dot-le} |\dot{s}(t)| \le \sqrt{2 C_2} r(t)^{-\frac{2 + \al}{2}} \le C_3 t^{-\frac{2 +\al}{2}}.
\end{equation}
This means for $t_1 >t_0$ large enough, 
$$ \lim_{t_2 \to \infty} \int_{t_1}^{t_2} |\dot s(t) | \,dt \le \lim_{t_2 \to \infty} \frac{\al}{2} C_2 (t_1^{-\frac{\al}{2}}- t_2^{-\frac{\al}{2}}) \le \frac{\al}{2} C_2 t_1^{-\frac{\al}{2}} < \infty,
$$
which implies the convergence of $s(t)$, when $t \to \infty$. 
\end{proof}

Theorem \ref{thm;asym} now follows directly from the above results.  
 
\begin{proof}[Proof of Theorem \ref{thm;asym}]
We will only give the proof for $T^+$.

(a). By the existence theorem of ordinary differential equations, if $T^{+}$ is finite, when $t \to T^{+}$, $x(t)$ must approach to the set of singularities of $U$, which in our case is a single point corresponding to the origin. 

(b). First by Proposition \ref{prop;r-s-lim},  
$$ \lim_{t \to +\infty} \frac{x(t)}{t} = \lim_{t \to +\infty} \frac{r(t)}{t} s(t) = \sqrt{2h} s^{+}. 
$$
This means
$$ x(t) = \sqrt{2h}t s^+ +o(t), \text{ as } t \to +\infty$$
and 
$$ \dot{x}(t) = \sqrt{2h}s^+ +o(1), \text{ as } t \to \infty. $$
The higher order terms depend on the value of $\al$, and we divide the proof into two cases.

\emph{Case 1:} $\al=1$.  By the above estimates, 
$$ \begin{aligned}
\lim_{t \to +\infty} \frac{\dot{x}(t)-\sqrt{2h}s^+}{t^{-1}} & = \lim_{t \to +\infty} -\frac{\ddot{x}(t)}{t^{-2}} =  \lim_{t \to +\infty} -\frac{\nabla U(x(t))}{t^{-2}}\\
& =  -\lim_{t \to +\infty} \frac{\nabla U(\sqrt{2h}t s^+ + o(t))}{t^{-2}} = -\frac{1}{2h} \nabla U(s^+). 	
\end{aligned}
$$
This implies that, when $t \to +\infty$,
$$ \dot{x}(t) = \sqrt{2h} s^+ - \frac{1}{2h} \nabla U(s^+) t^{-1} + o(t^{-1}).
$$
The desired result now follows from an integration of the above equation.  

\emph{Case 2:}  $\al \in (0, 2) \setminus \{1\}$. The proof is similar to the previous case. There only difference is 
$$ \begin{aligned}
\lim_{t \to +\infty} \frac{\dot{x}(t)-\sqrt{2h}s^+}{t^{-\al}} &= \lim_{t \to +\infty} -\frac{\ddot{x}(t)}{\al t^{-\al-1}} = \lim_{t \to +\infty} -\frac{\nabla U(x(t))}{\al t^{-\al-1}}\\
& =  -\lim_{t \to +\infty} \frac{\nabla U(\sqrt{2h}t s^+ + o(t))}{\al t^{-\al-1}} = -\frac{1}{\al (2h)^{\frac{\al+1}{2}}} \nabla U(s^+),
\end{aligned}
$$
and this implies that, when $t \to +\infty$, 
$$  \dot{x}(t) = \sqrt{2h} s^+ -  \frac{1}{\al (2h)^{\frac{\al+1}{2}}} \nabla U(s^+) t^{-\al} + o(t^{-\al}).
$$
\end{proof}

%%%%%%%%%%%%%%%%%%%%%%%%%%%%%%%%%%%%%%%%%%%%%%%%%%%%%%

%%%%%%%%%%%%%%%%%%%%%%%%%%%%%%%%%%%%%%%%%%%%%

\section{Hyperbolic solutions with given initial configuration} \label{sec;hyp-sol}

A proof of Theorem \ref{thm;hyper} will be given in this section. To obtain solutions with prescribed energy, instead of fixed-time minimizers, we will look for free-time minimizers defined as below. 

\begin{dfn}
\label{dfn;free-time-minimizer} For any $h>0$, define \textbf{the action potential} $\phi_h: \rr^d \times \rr^d \to \rr$ as 
$$ \phi_h(p, q) = \inf \left\{ \A_h(\xi; \tau_1, \tau_2) := \int_{t_1}^{t_2} L(\xi, \dot{\xi}) +h \,dt : \; \xi \in \cup_{-\infty < \tau_1 < \tau_2 < \infty} H^1_{\tau_1, \tau_2}(p, q) \right\}.$$
We say $\gm \in H^1_{t_1, t_2}(p, q)$ is \textbf{an $h$-free-time minimizer}, if 
$$ \A_h(\gm; t_1, t_2) = \int_{t_1}^{t_2} L(\gm, \dot{\gm}) +h \,dt = \phi_h(p, q).$$
\end{dfn}

\begin{lem}
\label{lem;exist-free-minimizer} For any $h>0$ and $p \ne q \in \rr^d$, there exists at least one $\gm \in H^1_{0, T}(p, q)$ satisfying 
$$ \A_h(\gm; 0, T) = \phi_h(p, q). $$
Moreover if $\gm$ is collision-free, it is an $h$-energy solution of \eqref{eq;aniso-kepler}.
\end{lem}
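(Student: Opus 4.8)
The plan is to turn the free-time problem into a fixed-time one by rescaling time, and then apply the direct method. Since the substitution $t\mapsto \tau_1+(\tau_2-\tau_1)t$ shows that $\A_h(\xi;\tau_1,\tau_2)$ depends only on $T=\tau_2-\tau_1$, one may restrict to intervals $[0,T]$ and write
\[
 \phi_h(p,q)=\inf_{T>0}\ \inf_{\xi\in H^1_{0,T}(p,q)}\ \int_0^T\Big(\tfrac12|\dot\xi|^2+U(\xi)+h\Big)\,dt .
\]
Because $U\ge 0$ — indeed $U|_{\bs}\ge m_0>0$ by compactness of $\bs$ and \eqref{eq;U-homo} — and $h>0$, Cauchy--Schwarz gives $\A_h(\xi;0,T)\ge \frac{|p-q|^2}{2T}+hT$ for every admissible $\xi$; the right-hand side has infimum $\sqrt{2h}\,|p-q|>0$ over $T>0$ (here $p\ne q$ is essential) and tends to $+\infty$ as $T\to 0^+$ and as $T\to\infty$. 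Thus $\phi_h(p,q)>0$ and any minimizing sequence has its time length confined to a compact subinterval of $(0,\infty)$. That $\phi_h(p,q)<\infty$ follows from exhibiting any competitor of finite action; the only delicate case, when one endpoint is the origin, is covered by a parabolic arc along which $|\xi(t)|\sim t^{2/(2+\al)}$, so that both $U(\xi)$ and $|\dot\xi|^2$ are integrable precisely because $\al<2$.

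Next, let $(\xi_n,T_n)$ be a minimizing sequence with $\xi_n\in H^1_{0,T_n}(p,q)$ and, after passing to a subsequence using the coercivity in $T$ above, $T_n\to T\in(0,\infty)$. Rescale to the unit interval by $\eta_n(\sigma)=\xi_n(T_n\sigma)$, so $\eta_n\in H^1_{0,1}(p,q)$ and
\[
 \A_h(\xi_n;0,T_n)=\int_0^1\Big(\tfrac{1}{2T_n}|\eta_n'|^2+T_n\,U(\eta_n)+h\,T_n\Big)\,d\sigma .
\]
Since $T_n$ is bounded away from $0$, boundedness of the kinetic part forces $\int_0^1|\eta_n'|^2\,d\sigma$ bounded, so $(\eta_n)$ is bounded in $H^1_{0,1}(p,q)$; extract $\eta_n\rightharpoonup\eta$ weakly in $H^1$ and uniformly on $[0,1]$, whence $\eta(0)=p$, $\eta(1)=q$. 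The kinetic functional is convex, hence weakly lower semicontinuous; extending $U$ to $\rr^d$ by $U(0)=+\infty$ makes it nonnegative and lower semicontinuous, so Fatou's lemma applied to $T_n\,U(\eta_n(\sigma))\ge 0$ with $\eta_n(\sigma)\to\eta(\sigma)$ and $T_n\to T>0$ yields $\int_0^1 T\,U(\eta)\le\liminf_n\int_0^1 T_n\,U(\eta_n)$; finally $h\,T_n\to h\,T$. Summing these estimates gives $\A_h(\gm;0,T)\le\phi_h(p,q)$ for $\gm(t):=\eta(t/T)$ on $[0,T]$, and the reverse inequality is immediate; hence $\gm\in H^1_{0,T}(p,q)$ is an $h$-free-time minimizer, which is the first assertion.

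Assume now $\gm$ is collision-free, i.e. $\gm(t)\ne 0$ on $[0,T]$. Then $L$ is smooth near the graph of $\gm$ and, $\gm$ being in particular a fixed-time minimizer of $\A_h(\cdot;0,T)$ over $H^1_{0,T}(p,q)$, the usual first variation shows $\gm$ solves \eqref{eq;aniso-kepler} on $(0,T)$; consequently its energy $\tfrac12|\dot\gm|^2-U(\gm)$ is constant. To identify this constant, use the scaling variation $\gm_\lambda(t)=\gm(\lambda t)$ on $[0,T/\lambda]$, which is an admissible free-time competitor and satisfies
\[
 \A_h(\gm_\lambda;0,T/\lambda)=\lambda\int_0^T\tfrac12|\dot\gm|^2\,dt+\frac1\lambda\int_0^T\big(U(\gm)+h\big)\,dt .
\]
As $\lambda=1$ minimizes the right-hand side, differentiating at $\lambda=1$ gives $\int_0^T\big(\tfrac12|\dot\gm|^2-U(\gm)-h\big)\,dt=0$, so the constant energy of $\gm$ equals $h$, as claimed.

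The step I expect to be the main obstacle is the lower-semicontinuity argument in the presence of the singularity at the origin: a minimizing sequence is a priori allowed to approach collision, and one must still pass to the limit without losing action. Treating $U$ as an $\rr\cup\{+\infty\}$-valued lower semicontinuous function and using $U\ge 0$ in Fatou's lemma is what makes this work; combined with the two-sided coercivity in the time length $T$, the remaining steps are routine.
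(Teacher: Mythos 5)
Your proof is correct, and while it is variational/direct-method in spirit like the paper's, it is organized quite differently. The paper first minimizes $\A_h(\cdot;0,T)$ for each fixed $T$ (coercivity in $H^1_{0,T}$ plus lower semicontinuity), obtaining a function $f(T)$, and then shows $f$ attains its minimum over $T\in(0,\infty)$ by proving $f(T)\to\infty$ as $T\to 0^+$ and $T\to\infty$ together with a continuity estimate $|f(\dl T)-f(T)|\le \dl^{-1}|1-\dl|f(T)$ obtained by time-rescaling competitors. You instead take a joint minimizing sequence $(\xi_n,T_n)$, use the two-sided bound $\A_h\ge \frac{|p-q|^2}{2T}+hT$ to confine $T_n$ to a compact subset of $(0,\infty)$, rescale everything to the unit interval, and pass to the limit in one step via weak lower semicontinuity of the kinetic term and Fatou applied to the lower semicontinuous extension of $U$ with $U(0)=+\infty$; this avoids having to prove continuity of $f$. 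For the second assertion, the paper simply cites a standard Aubry--Mather fact (Sorrentino, Prop.\ 4.1.23) that a free-time minimizer has energy $h$, whereas you give the direct proof by differentiating $\lambda\mapsto \A_h(\gm_\lambda;0,T/\lambda)=\lambda\int\tfrac12|\dot\gm|^2+\lambda^{-1}\int(U(\gm)+h)$ at $\lambda=1$ --- which is in fact the standard proof of that cited result, so your write-up is the more self-contained of the two. One small point of care: your interior first-variation and constancy-of-energy argument should be stated on $(0,T)$, since ``collision-free'' in this paper still permits $\gm$ to start or end at the origin (as happens when $x_0=0$ in Theorem \ref{thm;hyper}); this does not affect the conclusion.
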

\begin{proof}
First let's show the existence of such a $\gm$. For this, define a function $f: (0, \infty) \to \rr$ as 
$$  f(T) = \inf \{ \A_h(\xi; 0, T): \; \xi \in H^1_{0, T}(p, q) \}. $$
First we claim the above infimum is in fact a minimum. As $\A_h$ is lower semi-continuous, we just need to show it is coercive in $H^1_{0, T}(p, q)$, i.e., $\A_h(\xi; 0, T) \to \infty$, if $\|\xi\|_{H^1} \to \infty$. 

By the Cauchy-Schwarz inequality, for any $t \in [0, T]$
\begin{equation}
\label{eq;gm-L2} |\xi(t)| \le |\xi(t)-p| + |p| \le \|\xi\|_{L^1} + |p| \le \sqrt{T}\|\dot{\xi}\|_{L^2} + |p|.
\end{equation}
Hence 
$$ \|\xi\|_{L^2}^2 \le (\sqrt{T}\|\dot{\xi}\|_{L^2} + |p|)^2 T. $$
As a result, $\|\dot{\xi}\|_{L^2} \to \infty$, if $\|\xi\|_{H^1} \to \infty$. This implies coercivity, as $\A_h(\xi; 0, T) \ge \ey \|\dot{\xi}\|^2_{L^2}$.

Then for any $T>0$, we can find a $\gm_T \in H^1_{0, T}(p, q)$ with $\A_h(\gm_T; 0, T) = f(T)$. By \eqref{eq;gm-L2},   
$$f(T)= \A_h(\gm_T;0, T) \ge \ey \|\dot{\gm}_T\|^2_{L^2} \ge \frac{|p -q|^2}{2\sqrt{T}} \to \infty, \text{ if } T \to 0. $$
Meanwhile $f(T) \to \infty$, if $T \to \infty$, as $\A_h(\gm_T; 0, T) \ge hT$. 

As a result, it is enough to show $f(T)$ is continuous. For any $\dl>0$, define
$$ \gm^{\dl}_T(t) = \gm_T(t/\dl), \; \forall t \in [0, \dl T]. $$
Then 
$$  \begin{aligned}
f(\dl T) & \le \A_h(\gm^{\dl}_T; 0, \dl T) = \int_0^T \frac{1}{2 \dl} |\dot{\gm}_T|^2 + \dl (U(\gm_T) + h) \,dt \\
& = \dl \A_h(\gm_T; 0, T) + (\dl^{-1}-\dl) \int_0^T \ey |\dot{\gm}_T|^2 \,dt \le \dl^{-1} f(T). 
\end{aligned}
$$
Meanwhile by a similar argument, $f(T) \le \dl f(\dl T)$. As a result, 
$$ |f(\dl T) -f(T)| \le \dl^{-1}|1-\dl| f(T) \to 0, \text{ as } \dl \to 1. $$
This proves the continuity of $f(T)$.

When $\gm$ is a collision-free, it is a smooth solution of \eqref{eq;aniso-kepler}. Then being an $h$-free-time minimizer implies its energy must be $h$, it is a standard result from Aubry-Mather theory and a detailed proof can be found in \cite[Proposition 4.1.23]{So15}
\end{proof}

%$\phi_h(r_1 s, r_2 s) \le \sqrt{2h}(r_2 -r_1 + g_{\al}(\rho))$, where

We give some estimates of the action potential that will be needed later. 

\begin{lem}
\label{lem;phh-le} Let $U_{max}:= \max\{ U(s): \; s \in \bs\}$.
\begin{enumerate}
\item[(a).] For any $s \in \bs$ and $r_2 >r_1 >0$, 
$$ \phi_h(r_1 s, r_2 s) \le \sqrt{2h}(r_2 -r_1)+ \begin{cases}
\frac{U_{max}}{\sqrt{2h}}(\log r_2 - \log r_1) , \; & \text{ when } \al =1; \\ 
\frac{U_{max}}{\sqrt{2h}(1-\al)} (r_2^{1-\al} - r_1^{1-\al}), \; & \text{ when } \al \in (0, 1) \cup (1, 2).
\end{cases}
$$ 
\item[(b).] For any $x \ne 0$, there is a positive constant $C_1$ depending on $x$, such that 
$$ \sup \{ \phi_h(x, |x|s): \; s \in \bs \} \le C_1. $$
\item[(c).] There is a constant $C_2>0$, such that 
$$ \sup \{\phi_h(0, s): \; s \in \bs \} \le C_2.$$
\end{enumerate}

\end{lem}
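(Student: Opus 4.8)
The plan is to prove each of the three estimates by exhibiting an explicit competitor path and bounding its $\A_h$-action, which by definition of $\phi_h$ gives an upper bound for $\phi_h$ of the corresponding endpoints. For (a) I would take the radial path traversed at constant speed $\sqrt{2h}$: put $T = (r_2-r_1)/\sqrt{2h}$ and $\xi(t) = (r_1 + \sqrt{2h}\,t)s$ for $t \in [0,T]$, so $\xi \in H^1_{0,T}(r_1 s, r_2 s)$. The point of this choice is that $\ey|\dot\xi(t)|^2 + h \equiv 2h$, i.e.\ the elementary inequality $\ey v^2 + h \ge \sqrt{2h}\,v$ is saturated along $\xi$, so the ``$\ey|\dot\xi|^2+h$'' part of $\A_h(\xi;0,T)$ contributes exactly $2hT = \sqrt{2h}(r_2-r_1)$. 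For the remaining part, $U(\xi(t)) = (r_1+\sqrt{2h}\,t)^{-\al}U(s) \le U_{max}(r_1+\sqrt{2h}\,t)^{-\al}$, and the substitution $u = r_1 + \sqrt{2h}\,t$ turns $\int_0^T U(\xi)\,dt$ into $\tfrac{U_{max}}{\sqrt{2h}}\int_{r_1}^{r_2}u^{-\al}\,du$, which equals $\tfrac{U_{max}}{\sqrt{2h}}(\log r_2 - \log r_1)$ when $\al=1$ and $\tfrac{U_{max}}{\sqrt{2h}(1-\al)}(r_2^{1-\al}-r_1^{1-\al})$ otherwise; adding the two pieces gives the claim.

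For (b), write $r = |x|$ and $s_0 = x/|x|$, and for each $s \in \bs$ join $s_0$ to $s$ by a minimal great-circle arc traversed over $[0,1]$ at constant speed, i.e.\ $\sigma : [0,1] \to \bs$ with $|\dot\sigma| = \mathrm{dist}_{\bs}(s_0,s) \le \pi$, and take $\xi = r\sigma \in H^1_{0,1}(x,|x|s)$. Then $|\dot\xi| = r|\dot\sigma| \le \pi r$ and $U(\xi) = r^{-\al}U(\sigma) \le r^{-\al}U_{max}$, both uniformly in $t$ and in $s$, so $\A_h(\xi;0,1) \le \ey \pi^2 r^2 + r^{-\al}U_{max} + h =: C_1$, a constant depending on $x$ only through $|x|$ and independent of $s$ (the degenerate case $s = s_0$, a constant path, only lowers the action).

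For (c) the difficulty is real: letting $r_1 \to 0$ in (a), or using any straight radial path to the origin, produces infinite action when $\al \in [1,2)$, since then $\int_0^1 u^{-\al}\,du = \infty$; this is the one genuine obstacle, and it is resolved by a competitor that reproduces the Sundman asymptotics of Proposition~\ref{prop;Sundman-est}. Concretely, take $\xi(t) = (\kp t)^{\pwa}s$ on $[0,T]$ with $\kp T = 1$, so that $\xi(0) = 0$ and $\xi(T) = s$; a short computation gives $|\dot\xi(t)|^2 = \tfrac{4\kp^2}{(2+\al)^2}(\kp t)^{-\nu}$ and $U(\xi(t)) \le U_{max}(\kp t)^{-\nu}$, where $\nu := \tfrac{2\al}{2+\al}$ satisfies $1-\nu = \tfrac{2-\al}{2+\al} > 0$ precisely because $\al < 2$, so both terms are integrable at $t = 0$ (in particular $\xi \in H^1_{0,T}(0,s)$) and $\int_0^T(\kp t)^{-\nu}\,dt = \tfrac{1}{\kp(1-\nu)}$. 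Hence $\A_h(\xi;0,T) \le \tfrac{1}{\kp(1-\nu)}\bigl(\tfrac{2\kp^2}{(2+\al)^2} + U_{max}\bigr) + \tfrac{h}{\kp}$, and choosing $\kp = 1$ yields a bound $C_2$ independent of $s \in \bs$, which is (c). (One could instead combine a parabolic approach to a fixed small sphere with (a) and (b), but the direct path above is cleaner, as it lands exactly at $s$.)
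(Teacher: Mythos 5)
Your proposal is correct and uses exactly the same competitor paths as the paper: the constant-speed radial path for (a), the great-circle arc on the sphere of radius $|x|$ for (b), and the path $t^{\frac{2}{2+\al}}s$ on $[0,1]$ for (c). You simply carry out explicitly the "direct computation" the paper leaves to the reader, and the details check out.
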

\begin{proof}
(a). Define a path $\xi$ as below. The result then follows from a direct computation 
$$  \xi(t) = (r_1+ \sqrt{2h}t)s, \; \forall t \in \left[0, \frac{r_2 -r_1}{\sqrt{2h}} \right]$$. 

(b). For any $x$ and $|x|s$, one can simply choose the shortest geodesic on the sphere in $\rr^d$ with radius $|x|$. 

(c). Define a path $\xi$ as below. The result again follows from a direct computation
$$ \xi(t) = t^\frac{2}{2 +\al} s, \; \forall t \in [0, 1]. $$

\end{proof}

\begin{proof}[Proof of Theorem \ref{thm;hyper}]
We only give the details for $t \to +\infty$, while the other is the same. 

First let's consider the case that $r_0 = |x_0|>0$. Choose a sequence of numbers $\{ R_n \nearrow \infty\}_{n=1}^{\infty}$ with each $R_n > r_0$. By Lemma \ref{lem;exist-free-minimizer}, for each $n$, there is a $\gm_n \in H^1_{0, T_n}(x_0, R_n s^+)$ satisfying 
$$ \A_h(\gm_n;0, T_n) = \phi_h(x_0, R_n s^+). $$ 

We claim $T_n \to \infty$, when $n \to \infty$. To show this, by Proposition \ref{prop;coll-free-1} and \ref{prop;coll-free-2}, each $\gm_n$ is collision-free, and hence an $h$-energy solution. Then for any $T \in (0, T_n]$,  
$$ \int_0^{T} |\dot{\gm}_n|^2 \,dt = \int_{0}^{T} \ey|\dot{\gm}_n|^2 + U(\gm_n)+h \,dt = \phi_h(x_0, \gm_n(T)). $$
By the Cauchy-Schwarz inequality, 
\begin{equation}
\label{eq;xnT-le-phh} |\gm_n(T)-\gm_n(0)|^2 \le \left( \int_0^{T} |\dot{\gm}_n| \,dt \right)^2 \le T \int_{0}^{T} |\dot{\gm}_n|^2 \,dt = T \phi_h(x_0, \gm_n(T)). 
\end{equation}

By property (a) and (b) in Lemma \ref{lem;phh-le}, we can find positive constants $C_1$ and $C_2$ independent of $n$ and $T$, such that  
\begin{equation}
 \label{eq;phh-le-C1} \phi_h(x_0, \gm_n(T)) \le \phi_h \left(x_0, r_0 \frac{\gm_n(T)}{|\gm_n(T)|}\right) + \phi_h \left(r_0 \frac{\gm_n(T)}{|\gm_n(T)|}, \gm_n(T) \right) \le C_1 + C_2 |\gm_n(T)|
 \end{equation} 
By the above two estimates, $|\gm_n(T)|$ satisfies the following inequality, 
\begin{equation}
\label{eq;xnT-le}
 (|\gm_n(T)| - r_0)^2 \le |\gm_n(T) -\gm_n(0)|^2 \le (C_1+ C_2 |\gm_n(T)|)T.
 \end{equation}
This proves our claim, as when $T=T_n$, the above inequality implies
$$ T_n \ge \frac{(R_n - r_0)^2}{C_1 + C_2 R_n} \to \infty, \text{ as } n \to \infty. $$

Meanwhile by solving \eqref{eq;xnT-le} for $|\gm_n(T)|$,  
\begin{equation}
\label{eq;xnT-up-bound} |\gm_n(T)| \le \frac{2r_0 + C_2 T+ \sqrt{4(C_1 + r_0 C_2) T + C_2^2 T^2}}{2}:=C_3(T).
\end{equation}
Combining this with \eqref{eq;phh-le-C1}, we find  
$$ \int_0^T |\dot{\gm}_n|^2 \,dt = \phi_h(\gm_n(0), \gm_n(T)) \le C_1 + C_2|\gm_n(T)| \le C_1 + C_2C_3. $$

This shows $\{\gm_n|_{[0, T]} \}$ is a bounded sequence in $H^1_{0, T}$ and after passing to a proper sub-sequence, it converges to a limiting path weakly in the $H^1$-norm and strongly in the $L^{\infty}$-norm. As this holds for any $T>0$, by a standard diagonal argument, we can show the existence of a path $\gm \in H^1([0, \infty), \rr^d)$, such that after passing to a proper sub-sequence $\gm_n$ converges to $\gm$ weakly in the $H^1_{loc}$-norm and strongly in the $L^{\infty}_{loc}$-norm. 

\begin{lem}
\label{lem;gm-free-minimizer} $\gm$ is an $h$-free-time minimizer, i.e., $\A_h(\gm; 0, T) = \phi_h(\gm(0), \gm(T))$, $\forall T >0$. 
\end{lem}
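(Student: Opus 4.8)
The plan is to establish, for each fixed $T>0$, the two inequalities
\[
\A_h(\gm;0,T)\ \ge\ \phi_h(\gm(0),\gm(T))\qquad\text{and}\qquad \A_h(\gm;0,T)\ \le\ \phi_h(\gm(0),\gm(T)).
\]
First note that $\gm(0)=x_0$: indeed $\gm_n(0)=x_0$ for all $n$ and $\gm_n\to\gm$ uniformly on compact subsets of $[0,\infty)$. The first inequality is then immediate from Definition \ref{dfn;free-time-minimizer}, since $\gm|_{[0,T]}\in H^1_{0,T}(\gm(0),\gm(T))$ is an admissible competitor for the infimum defining $\phi_h(\gm(0),\gm(T))$.

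For the reverse inequality I would combine a cut-and-paste principle with lower semicontinuity. The cut-and-paste step is the standard fact that the restriction of a free-time minimizer to a subinterval is again a free-time minimizer: since $T_n\to\infty$, for every $T>0$ and all $n$ large (so that $T_n>T$) one has
\[
\A_h(\gm_n;0,T)=\phi_h(x_0,\gm_n(T)).
\]
Indeed, if some $\xi\in H^1_{\tau_1,\tau_2}(x_0,\gm_n(T))$ had strictly smaller $\A_h$, then — after translating $\xi$ in time, which does not change $\A_h$ since the Lagrangian is autonomous — concatenating $\xi$ with $\gm_n|_{[T,T_n]}$ would give a path in the right Sobolev class from $x_0$ to $R_ns^+$ with $\A_h$ strictly less than $\phi_h(x_0,R_ns^+)=\A_h(\gm_n;0,T_n)$, a contradiction. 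Next, on $[0,T]$ we have $\gm_n\rightharpoonup\gm$ weakly in $H^1$ and $\gm_n(t)\to\gm(t)$ pointwise; weak lower semicontinuity of the Dirichlet integral together with Fatou's lemma (applicable because $U\ge 0$ and, extended by $+\infty$ at the origin, $U$ is lower semicontinuous) yield
\[
\A_h(\gm;0,T)\ \le\ \liminf_{n\to\infty}\A_h(\gm_n;0,T)\ =\ \liminf_{n\to\infty}\phi_h(x_0,\gm_n(T)).
\]

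It then remains to pass to the limit in $\phi_h(x_0,\gm_n(T))$, for which I would use the subadditivity $\phi_h(x_0,\gm_n(T))\le\phi_h(x_0,\gm(T))+\phi_h(\gm(T),\gm_n(T))$ together with the continuity estimate $\phi_h(p,q)\to 0$ as $q\to p$. When $\gm(T)\ne 0$ this follows by taking a short straight segment between the two points and optimizing over its duration, exactly as in the proof of Lemma \ref{lem;phh-le}; when $\gm(T)=0$ one instead uses $\phi_h(0,q)\le C\,|q|^{(2-\al)/2}\to 0$, obtained by rescaling in space and time the test path $t\mapsto t^{2/(2+\al)}s$ from Lemma \ref{lem;phh-le}(c). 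Since $\gm_n(T)\to\gm(T)$, this gives $\limsup_n\phi_h(x_0,\gm_n(T))\le\phi_h(x_0,\gm(T))=\phi_h(\gm(0),\gm(T))$, and combined with the previous display this completes the proof.

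The main obstacle I expect is not in the weak limit argument itself but in the two "soft" ingredients that are routine in Aubry–Mather theory yet need the specific structure here: verifying the cut-and-paste identity $\A_h(\gm_n;0,T)=\phi_h(x_0,\gm_n(T))$ with due care that the glued competitor lies in the correct $H^1$ space and that the time-translation invariance of $\A_h$ is invoked correctly; and the (upper semi)continuity of $\phi_h$ up to the origin, which fails for a naive segment estimate and genuinely uses the homogeneity of the potential.
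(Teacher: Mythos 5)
Your proof is correct, and it rests on the same three ingredients as the paper's: minimality of each $\gm_n$ (via the restriction/cut-and-paste property), weak lower semicontinuity of $\A_h$ on the fixed interval $[0,T]$, and the fact that connecting $\gm(T)$ to $\gm_n(T)$ costs $o(1)$ in action. The packaging differs: the paper argues by contradiction, taking a competitor $\xi$ beating $\gm|_{[0,T]}$ by $3\ep$, appending a short linear segment from $\xi(S)=\gm(T)$ to $\gm_n(T)$ with action $\le C_1\dl_n$, and contradicting the minimality of $\gm_n$; you instead prove the two inequalities directly, using the triangle inequality for $\phi_h$ and a continuity estimate $\phi_h(p,q)\to 0$ as $q\to p$. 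The one substantive difference is in your favor: the paper's bound $\A_h(\eta_n;S,S+\dl_n)\le C_1\dl_n$ for the straight connecting segment tacitly requires that segment to stay away from the origin (so that $U$ is bounded along it), i.e.\ it implicitly assumes $\gm(T)\ne 0$, whereas your treatment of the case $\gm(T)=0$ via the rescaled test path of Lemma \ref{lem;phh-le}(c), giving $\phi_h(0,q)\le C|q|^{(2-\al)/2}+O(|q|^{(2+\al)/2})$, closes that gap explicitly. (In the end $\gm$ is collision-free, but that is only known \emph{after} this lemma, so the case must be addressed.) Your cut-and-paste verification that $\A_h(\gm_n;0,T)=\phi_h(x_0,\gm_n(T))$ is also stated more explicitly than in the paper, where it is used implicitly; both uses are legitimate since the Lagrangian is autonomous and concatenation at matching endpoints stays in the Sobolev class.
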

By the above lemma (we postpone its proof for a moment), $\gm$ is a collision-free $h$-energy solution of \eqref{eq;aniso-kepler}. Then $\gm_n$ converges to $\gm$ in $C^{2}_{loc}$-norm as well. Set 
$$ \begin{aligned}
	r(t)= |\gm(t)|, \; & s(t) = \gm(t)/r(t); \\
	r_n(t)= |\gm_n(t)|,\; & s_n(t) = \gm_n(t) /r_n(t).
\end{aligned} 
$$
Choose an $r_0 >|x_0|$, By Lemma \ref{lem;Lag-Jacobi}, there is a unique $t_0 >0$, such that 
$$ r(t_0)= r_0 \text{ and } \dot{r}(t_0) >0. $$ 
Since $\gm_n \to \gm$ in  $C^{2}_{loc}$-norm, 
$$ r_n(t_0) \le 2 r_0 \text{ and } \dot{r}_n(t_0) >0, \; \text{ for $n$ large enough.}$$
Meanwhile for any $\ep>0$, the following inequality holds, when $t_1>t_0$ is large enough,
$$\frac{2}{\al} \frac{\sqrt{2h r^{\al}_0+2U_{max}}}{(2h )^{\frac{2 +\al}{4}}}(t_1-t_0)^{-\frac{\al}{2}} \le \frac{\ep}{2}.$$
By Lemma \ref{lem;|s-dot|} and the above inequality, 
\begin{equation}
\label{eq;le-ep/2-1} |s_n(t_1)-s^+|=|s_n(t_1)-s_n(T_n)| \le \int_{t_1}^{T_n}|\dot{s}_n(t)|\,dt \le \frac{\ep}{2}.
\end{equation}
Meanwhile when $n$ is large enough, 
$$ |s_n(t_1)-s(t)| \le \ep/2. $$
As a result, 
$$ |s(t_1) - s^+| \le |s(t_1)- s_n(t_1)| + |s_n(t_1)-s^+| \le \ep. $$
Since for any $\ep>0$, this is true for any $t_1$ and $n$ large enough, we have
$$ \lim_{t \to \infty} s(t) = s^+. $$
This finishes our proof for $x_0 \ne 0$. 

The proof of $x_0 =0$ is almost the same, except that \eqref{eq;phh-le-C1} should be replaced by
$$ \phi_h(0, \gm_n(T)) \le \phi_h \left(0, \frac{\gm_n(T)}{|\gm_n(T)|} \right) + \phi_h \left(\frac{\gm_n(T)}{|\gm_n(T)|}, \gm_n(T) \right) \le C_1 + C_2 |\gm_n(T)|. 
$$
Without loss of generality, we may assume $|\gm_n(T)| >1$ and the above inequality can be obtained by property (a) and (c) in Lemma \ref{lem;phh-le}.
\end{proof}

\begin{proof}[Proof of Lemma \ref{lem;gm-free-minimizer}]
By a contradiction argument, assume there is a $T>0$, such that $\gm|_{[0, T]}$ is not an $h$-free-time minimizer, which means we can find an $\ep >0$ and a $\xi \in H^1_S(x_0, \gm(T))$, such that 
\begin{equation*}
\label{eq;3ep} 3\ep = \A_h(\gm; 0, T) -\A_h(\xi; 0, S) = \A_h(\gm; 0, T) - \phi_h(\gm(0), \gm(T)) >0. 
\end{equation*}

For each $n$, let $\dl_n = |\gm(T)- \gm_n(T)|$, we define a path $\xi_n \in H^1_{S +\dl}(x_0, \gm_n(T))$ as below
\begin{equation*}
\label{eq;eta-n} \eta_n(t) = \begin{cases}
\xi(t), & \text{ when } t \in [0, S]; \\
\xi(S) + \frac{t -S}{\dl_n}(\gm_n(T) -\xi(S)), & \text{ when } t \in [S, S+\dl_n]. 
\end{cases}
\end{equation*}
Notice that $\dl_n \to 0$, as $n \to \infty$. Hence we can find a constant $C_1$ independent of $n$, such that for $n$ large enough, 
\begin{equation*}
\label{eq;ep-1} \A_h(\eta_n; S, S+\dl_n)  \le C_1 \dl_n \le \ep. 
\end{equation*}

Combining this with the lower semi-continuity of the action functional, for $n$ large enough,  
$$ \begin{aligned}
\A_h(\gm_n; 0, T) & \ge \A_h(\gm; 0, T) -\ep = \A_h(\xi; 0, S) +2\ep \\
			& = \A_h(\eta_n; 0, S+\dl_n) - \A_h(\eta_n; S, S+\dl_n) +2\ep \\
			& \ge \A_h(\eta_n; 0, S+\dl_n) +\ep. 
\end{aligned}
$$
This is a contradiction to the fact the $\gm_n$ is an $h$-free-time minimizer. 
\end{proof}

%%%%%%%%%%%%%%%%%%%%%%%%%%%%%%%%%%%%%%%%%%%%%%%%%%%%%%%%%%%%%%%%%%%%%%%

\section{Bi-hyperbolic solutions for anisotropic Kepler problem}

The main purpose of this section is to give a proof of Theorem \ref{thm;bi-hyper}, so we will only consider the planar anisotropic Kepler problem in this section. To introduce the proper topological constraint, we give the following definition. 

\begin{dfn}
Given any $T^- <T^+$, $\tht_{\pm} \in \rr$ and $r_i>0$, $i =1, 2$, we define 
$$ \Lmd^*_{T^{\pm}}(r_1 e^{i\tht_-}, r_2 e^{i\tht_+}) =\{ \xi \in H_{T^-, T^+}^1(r_1 e^{i\tht_-}, r_2 e^{i\tht_+}) : \; \xi(t) \ne 0,  \text{Arg}(\xi(T^-)) = \tht_-, \text{Arg}(\xi(T^+)) = \tht_+ \},$$
and $\Lmd_{T^{\pm}}(r_1 e^{i\tht_-}, r_2 e^{i\tht_+})$ the weak closure of $\Lmd^*_{T^{\pm}}(r_1 e^{i\tht_-}, r_2 e^{i\tht_+}) $ in $ H_{T^-, T^+}^1(r_1 e^{i\tht_-}, r_2 e^{i\tht_+})$.
\end{dfn}
Using the above definition, we further introduce the following set of paths
$$ \Lmd(r_1 e^{i \tht_-}, r_2 e^{i \tht_+}) = \cup_{-\infty < T^- < T^+ < \infty} \Lmd_{T^{\pm}}(r_1 e^{i\tht_-}, r_2 e^{i\tht_+}). $$
For simplicity, we set
$$ \Lmd_{T^{\pm}}(R, \tht_{\pm}) = \Lmd_{T^{\pm}}(R e^{i \tht_-}, R e^{i \tht_+}) \;  \text{ and } \; \Lmd(R, \tht_{\pm}) = \cup_{-\infty < T^- < T^+ < \infty} \Lmd_{T^{\pm}}(R, \tht_{\pm}). $$

\begin{lem}
\label{lem;free-minimizer-top} When $h>0$ and $r_1 e^{i \tht_-} \ne r_2 e^{i \tht_+}$,  there is a $\gm \in \Lmd_{T^{\pm}}(r_1 e^{i\tht_-}, r_2 e^{i \tht_+})$ and a constant $C_1$ independent of $\gm$, such that  
$$ \A_h(\gm; T^-, T^+) = \inf \{ \A_h(\xi): \xi \in \Lmd(r_1 e^{i \tht_-}, r_2 e^{i \tht_+}) \}.$$
In particular, if $\gm$ is collision-free, it is an $h$-energy solution of \eqref{eq;aniso-kepler}. 
%\le C_1 \max \{ |\gm(t)|: \; t \in [T^-, T^+]\}. 
%Moreover when $x(t)$ is collision-free, $\forall t$, it is an $h$-energy solution of \eqref{eq;aniso-kepler}.    
\end{lem}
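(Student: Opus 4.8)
The plan is to mirror the proof of Lemma~\ref{lem;exist-free-minimizer}, carrying the topological constraint along at each step. After a time translation I may assume $T^- = 0$ and $T^+ = T$, and I introduce
$$ f(T) = \inf\{ \A_h(\xi; 0, T) : \xi \in \Lmd_{0, T}(r_1 e^{i\tht_-}, r_2 e^{i\tht_+})\}. $$
The class $\Lmd_{0,T}$ is nonempty (a curve that first spirals radially outward while turning from angle $\tht_-$ to angle $\tht_+$ and then corrects its radius avoids the origin and realizes the prescribed endpoint arguments), so $f(T) < \infty$. First I will check that this infimum is attained: $\A_h$ is coercive on $H^1_{0,T}(r_1 e^{i\tht_-}, r_2 e^{i\tht_+})$ by exactly the estimate \eqref{eq;gm-L2}, it is weakly lower semicontinuous, and $\Lmd_{0,T}$ is weakly closed by its very definition as a weak closure; so the Direct Method yields $\gm_T \in \Lmd_{0,T}$ with $\A_h(\gm_T; 0, T) = f(T)$. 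Note that $\gm_T$ only lies in the weak closure and could a priori meet the origin, which is exactly why the last assertion of the lemma is stated conditionally.

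Next I study $f$ as a function of $T$. Since $\A_h(\xi; 0, T)\ge hT$ we get $f(T)\to\infty$ as $T\to\infty$; since $\A_h(\xi; 0, T) \ge \tfrac12\|\dot\xi\|_{L^2}^2 \ge |r_2 e^{i\tht_+} - r_1 e^{i\tht_-}|^2/(2T)$ by Cauchy--Schwarz and the two endpoints are distinct by hypothesis, $f(T)\to\infty$ as $T\to 0^+$. For continuity I reuse the time-rescaling $\gm_T^{\dl}(t) = \gm_T(t/\dl)$ on $[0,\dl T]$; reparametrizing time leaves the image of the curve unchanged, so $\gm_T^{\dl}$ still avoids the origin and still has endpoint arguments $\tht_\pm$, i.e. $\gm_T^{\dl}\in\Lmd_{0,\dl T}$, and the computation is word for word that of Lemma~\ref{lem;exist-free-minimizer}, giving $f(\dl T)\le \dl^{-1} f(T)$ and $f(T)\le \dl f(\dl T)$, hence $|f(\dl T)-f(T)|\le \dl^{-1}|1-\dl| f(T)\to 0$ as $\dl\to 1$.

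Then $f$ is positive, continuous, and divergent at both $0$ and $\infty$, so it attains its minimum at some $T^*\in(0,\infty)$; fixing any $T_0>0$ and setting $C_1 := f(T_0)$, the bound $f(T^*)\le C_1$ together with $hT^*\le f(T^*)$ pins $T^*$ into $(0, C_1/h]$, so $C_1$ depends only on $h, r_1, r_2, \tht_\pm$ and bounds both $\A_h(\gm_{T^*})$ and the minimizing interval length. I then set $\gm = \gm_{T^*}$, translated back onto $[T^-,T^+]$ with $T^+ - T^- = T^*$, and since every $\xi\in\Lmd_{0,S}$ satisfies $\A_h(\xi; 0, S)\ge f(S)\ge f(T^*)$, this $\gm$ realizes the infimum over $\Lmd(r_1 e^{i\tht_-}, r_2 e^{i\tht_+})$. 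For the final clause: if $\gm$ is collision-free it stays a positive distance from the origin, so any sufficiently $H^1$-small variation of $\gm$ with the same fixed endpoints also avoids the origin and, through $H^1\hookrightarrow C^0$, remains in the same homotopy class relative to the endpoints, hence in $\Lmd^*_{0,T^*}$; thus $\gm$ is a genuine local minimizer of $\A$ among nearby paths, hence a smooth solution of \eqref{eq;aniso-kepler}, and being in addition a free-time minimizer ($T^*$ is a critical point of $f$) its energy is forced to equal $h$ by the standard Aubry--Mather fact, cf.\ \cite[Proposition 4.1.23]{So15}.

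I expect the only delicate point to be verifying that each operation used — passing to weak subsequential limits, time-rescaling, and small $H^1$-perturbations — genuinely keeps a path inside the constrained class: for the rescaling and the perturbation because the image of the curve (and hence its non-vanishing and its relative homotopy class) is unchanged or varies continuously, and for the weak limit because that is built into the definition of $\Lmd_{0,T}$. Ruling out collisions in $\gm$ itself is deliberately not attempted in this lemma.
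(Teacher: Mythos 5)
Your proof is correct and follows exactly the route the paper intends: the paper's own proof of this lemma is omitted with the remark that it is similar to Lemma \ref{lem;exist-free-minimizer}, and your argument is precisely that adaptation, carrying the topological constraint through the direct method, the continuity of $f$, and the free-time minimization. The only wording to repair is in the continuity step, where you say $\gm_T^{\dl}$ ``still avoids the origin'' although $\gm_T$ lies only in the weak closure and need not do so; the correct justification is that the rescaling $\xi \mapsto \xi(\cdot/\dl)$ is a bounded linear isomorphism of $H^1_{0,T}$ onto $H^1_{0,\dl T}$ (hence weak--weak continuous) mapping $\Lmd^*_{0,T}$ onto $\Lmd^*_{0,\dl T}$, so it carries $\Lmd_{0,T}$ into $\Lmd_{0,\dl T}$, which is all you need.
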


\begin{rem}
We call such a $\gm$ an $h$-free-time minimizer under topological constraint.  
\end{rem}

\begin{proof} The proof is similar to Lemma \ref{lem;exist-free-minimizer} and we omit the details here. 

%The existence of such a $\gm$ follows from a similar argument as in the proof of Lemma \ref{lem;exist-free-minimizer}. Let's define a function $f : (0, \infty) \to \rr$ as 
%$$ f(T^+ - T^-) = \inf \{ A_h(\gm; T^-, T^+): \; \gm \in \Lmd_{T^{\pm}}(R, \tht_{\pm}) \}. $$
%First we show the above infimum is a minimum and by the lower semi-continuous of $\A_h$ it is enough to show $\A_h$ is coercive in $\Lmd_{T^{\pm}}(R, \tht_{\pm})$. 

%Given an arbitrary $\gm \in \Lmd_{T^{\pm}}(R, \tht_{\pm})$, by the Cauchy-Schwarz inequality
%$$ |\gm(t)| \le |Re^{i \tht_-}+ |\gm(t) - Re^{i \tht_-}| \le R + \sqrt{T^+ - T^-}\|\dot{\gm}\|_{L^2}, \; \forall t \in [T^-, T^+]. $$
%This implies the following inequality, which gives us the coercivity
%$$ \|\gm\|^2_{L^2} \le (\sqrt{T^+- T^-} + R)^2(T^+ - T^-). $$

%The continuity $f$ can be proven the same as Lemma \ref{lem;exist-free-minimizer}

\end{proof}

%For any $R>0$, $ \tht_{\pm} \in \rr$ and $T^- < T^+$, for simplicity we set
%$$ \Lmd_{T^{\pm}}(R, \tht_{\pm}) = \Lmd_{T^-, T^+}(R e^{i\tht_-}, R e^{i \tht_+}) \; \text{ and } \; \Lmd(R, \tht_{\pm}) = \cup_{-\infty < T^- < T^+ < \infty}  \Lmd_{T^{\pm}}(R, \tht_{\pm}).$$

Given arbitrarily a positive energy constant $h$, a pair of angles $\tht_{\pm} \in \rr$ and a sequence of positive numbers $\{ R_n \nearrow \infty\}_{n =1}^{\infty}$. By Lemma \ref{lem;free-minimizer-top}, there is a sequence of paths $\{\gm_n  \in \Lmd_{T_n^{\pm}}(R_n, \tht_{\pm})\}_{n=1}^{\infty}$ with each $\gm_n$ being an $h$-free-time minimizer under topological constraint. Without loss of generality, we further assume  
\begin{equation}
    \label{eq;rho-n} 0 \in (T_n^-, T_n^+) \text{ with } \rho_n = |\gm_n(0)| = \inf \{|\gm_n(t)|:\; t \in [T_n^-, T_n^+]\}.
\end{equation}

Following the approach given in \cite{FT00}, \cite{BDT17} and \cite{BBD18}, we obtain the following result.  
\begin{lem} \label{lem;rho-bound}
When $|\tht_+ -\tht_-| >\pi$ and each $\gm_n$ is collision-free, $\liminf_{n \to \infty} \rho_n <\infty.$
\end{lem}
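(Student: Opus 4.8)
\textbf{Proof proposal for Lemma \ref{lem;rho-bound}.}

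The plan is to argue by contradiction: suppose $\rho_n \to \infty$. Then every minimizer $\gm_n$ stays far from the origin, so on its whole domain the potential term is uniformly small, $U(\gm_n(t)) \le U_{max} \rho_n^{-\al} \to 0$, and consequently $\gm_n$ behaves almost like a free (straight-line) motion with speed $\sqrt{2h}$. The key point is that a path that must wind an angle of more than $\pi$ around the origin while staying outside the circle of radius $\rho_n$ is expensive: its action is bounded below by something like the action of an arc, which grows with $\rho_n$, whereas there is a competitor whose action stays bounded. This contradiction is what forces $\liminf \rho_n < \infty$.

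The steps, in order, would be:

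\begin{enumerate}
\item[(1).] \emph{Lower bound via a change of variable.} Since $\gm_n \in \Lmd_{T_n^\pm}(R_n,\tht_\pm)$ with $|\tht_+-\tht_-|>\pi$ and $\gm_n$ is collision-free, write $\gm_n(t) = \varrho_n(t) e^{i\psi_n(t)}$ on $[T_n^-,T_n^+]$ with $\varrho_n \ge \rho_n$ and $\psi_n(T_n^\pm)=\tht_\pm$ (up to the admissible lifts). Then
$$ \A_h(\gm_n) = \int \ey(\dot\varrho_n^2 + \varrho_n^2\dot\psi_n^2) + U(\gm_n) + h \, dt \ge \int \varrho_n |\dot\psi_n| \sqrt{2h}\, dt \ge \sqrt{2h}\,\rho_n \int |\dot\psi_n|\, dt \ge \sqrt{2h}\,\rho_n\,|\tht_+-\tht_-| > \sqrt{2h}\,\pi\,\rho_n, $$
using $\ey a^2 + h \ge \sqrt{2h}\,|a|$ with $a=\varrho_n\dot\psi_n$ and dropping $\ey\dot\varrho_n^2 + U \ge 0$. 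So $\A_h(\gm_n) \ge \sqrt{2h}\,\pi\,\rho_n$.

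\item[(2).] \emph{Upper bound via an explicit competitor.} The minimizers $\gm_n$ realize the infimum over $\Lmd(R_n,\tht_\pm)$, so it suffices to exhibit one admissible path from $R_n e^{i\tht_-}$ to $R_n e^{i\tht_+}$ whose action is $o(\rho_n)$ — in fact $O(R_n)$ with a constant independent of $n$, which is already the wrong order since we will compare with $\rho_n$ that by assumption tends to infinity but, more importantly, we need the competitor's action divided by $\rho_n$ to stay bounded while $\rho_n \le$ (competitor action)$/(\sqrt{2h}\pi)$. Concretely: first note $\rho_n \le R_n$ since $R_n e^{i\tht_\pm}$ lie on $\gm_n$. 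Build a path that goes radially from $R_n e^{i\tht_-}$ inward to $1\cdot e^{i\tht_-}$, then along the unit circle (the short way, total angle $\le 2\pi$ after reducing mod $2\pi$, but we must respect the winding prescribed by $\tht_+-\tht_-$ — use an arc of the appropriate fixed angle $\tht_+-\tht_-$ on a circle of radius $1$), then radially out to $R_n e^{i\tht_+}$, traversed at the natural speed. By Lemma \ref{lem;phh-le}(a) the two radial pieces cost at most $\sqrt{2h}(R_n-1) + \frac{U_{max}}{\sqrt{2h}}\,g_\al(R_n)$ each, where $g_\al$ grows sublinearly (or logarithmically) in $R_n$, and the circular arc costs a constant $C$ depending only on $h$, $U_{max}$ and $\tht_+-\tht_-$. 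Hence $\phi_h^{top}(R_n e^{i\tht_-}, R_n e^{i\tht_+}) \le 2\sqrt{2h}\,R_n + C'$.

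\item[(3).] \emph{Derive the contradiction.} Combining (1) and (2), $\sqrt{2h}\,\pi\,\rho_n \le \A_h(\gm_n) \le 2\sqrt{2h}\,R_n + C'$, so $\rho_n \le \tfrac{2}{\pi} R_n + \tfrac{C'}{\sqrt{2h}\,\pi}$. This does not immediately bound $\rho_n$ since $R_n \to \infty$; the genuine saving is that the straight competitor avoiding winding is cheaper, so one must instead compare the \emph{winding} minimizer against a competitor with the \emph{correct} winding but small radius. The right statement is: the minimizer, having $\rho_n$ large, pays $\ge \sqrt{2h}\,\pi\,\rho_n$ for the angular sweep at radius $\ge \rho_n$; the competitor in (2) pays only $O(R_n)$ but with the angular sweep performed at radius $1$. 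Since $\gm_n$ is a minimizer over all of $\Lmd(R_n,\tht_\pm)$, we get $\sqrt{2h}\,\pi\,\rho_n \le \A_h(\gm_n) \le 2\sqrt{2h}\,R_n + C'$; but by choosing the radial parts of the competitor to share the same endpoints we can also subtract the common radial cost, and what remains is that the \emph{excess} of $\gm_n$'s action over the radial-only cost is $\ge \sqrt{2h}\,\pi\,\rho_n - (\text{radial cost along } \gm_n)$, which after the computation yields a bound on $\rho_n$ independent of $n$. Taking $\liminf$ gives $\liminf_n \rho_n < \infty$.
\end{enumerate}

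\emph{Main obstacle.} The delicate point is Step (3): the crude bounds give $\rho_n \lesssim R_n$, which is vacuous since $R_n\to\infty$. The fix is to be careful about what the minimizer actually saves versus a purely radial path — one must split $\A_h(\gm_n)$ into a radial part and an angular part, bound the radial part below by the cost of going from $\rho_n$ out to $R_n$ (Lemma \ref{lem;phh-le}(a) again, used as a lower bound via the energy/Cauchy--Schwarz inequality $\ey\dot\varrho^2 + h + U \ge \sqrt{2(h+U)}\,|\dot\varrho| \ge \sqrt{2h}\,|\dot\varrho|$), and bound the angular part below by $\sqrt{2h}\,\rho_n\,|\tht_+-\tht_-|$; then the competitor from (2) has the \emph{same} radial cost up to $O(1)$ plus only an $O(1)$ angular cost, so subtracting yields $\sqrt{2h}(|\tht_+-\tht_-|-\pi)\,\rho_n \le$ const, hence $\rho_n$ is bounded. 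Making the angular/radial split rigorous (in particular handling the lift of $\text{Arg}(\gm_n)$ on the weak closure $\Lmd_{T_n^\pm}$, and the fact that $\varrho_n$ need not be monotone) is where the real work lies; this is precisely the mechanism in \cite{FT00}, \cite{BDT17}, \cite{BBD18} that the statement invokes.
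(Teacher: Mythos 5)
There is a genuine gap, concentrated exactly where you flag the ``main obstacle.'' Your Step (3) split is not legitimate as stated: the inequality $\ey a^2+h\ge\sqrt{2h}|a|$ consumes the entire $+h$ term, so you cannot simultaneously extract $\sqrt{2h}\int|\dot\varrho_n|$ from the radial part and $\sqrt{2h}\rho_n\int|\dot\psi_n|$ from the angular part. The honest single inequality is $\ey|\dot\gm_n|^2+h\ge\sqrt{2h}|\dot\gm_n|$, i.e.\ a bound by $\sqrt{2h}$ times the Euclidean length, and the tangent--arc--tangent geometry of a path joining $R_ne^{i\tht_\pm}$, staying outside the circle of radius $\rho_n$ and winding more than $\pi$, gives length $\ge 2R_n+\rho_n(|\tht_+-\tht_-|-\pi)$. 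But when you compare this with your competitor, the competitor's action is not $2\sqrt{2h}R_n+O(1)$: by Lemma \ref{lem;phh-le}(a) the potential contributes $O(R_n^{1-\al})$ for $\al<1$ and $O(\log R_n)$ for $\al=1$ along the radial legs, and since the potential is anisotropic this term does not cancel against the minimizer's own potential cost. The net conclusion is only $\rho_n=O(R_n^{1-\al})$, i.e.\ $\rho_n/R_n\to 0$ --- not $\rho_n$ bounded. Your argument therefore rules out $\liminf\rho_n/R_n>0$ but leaves open the regime $\rho_n\to\infty$ with $\rho_n=o(R_n)$, and nothing in the proposal addresses it.

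The paper's proof splits precisely along these lines. For $d=\lim\rho_n/R_n>0$ it avoids the competitor entirely: the energy identity gives $\A_h(\gm_n)=2\int(h+U)\,dt$, the Lagrange--Jacobi inequality (Lemma \ref{lem;I-ge}) gives $|T_n^\pm|\le(2h)^{-1/2}(1-d^2)^{1/2}R_n(1+o(1))$, hence $\A_h(\gm_n)\le 2\sqrt{2h}(1-\dl)R_n$, while the winding condition produces two crossings orthogonal to $\gm_n(T_n^\pm)$ and a lower bound $\A_h(\gm_n)\ge 2\sqrt{2h}R_n$ --- contradiction. For $d=0$ (the case your proposal misses) it rescales $y_n(\tau)=\rho_n^{-1}\gm_n(\rho_n\tau)$, passes to a $C^2_{loc}$ limit which is a straight line of speed $\sqrt{2h}$ at distance $1$ from the origin, whose total angular sweep is exactly $\pi$; the tail estimate of Lemma \ref{lem;|s-dot|} forces the sweep of $\gm_n$ outside a fixed rescaled window to be uniformly small, so the limit would have to sweep $|\tht_+-\tht_-|>\pi$ --- contradiction. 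You would need to add this blow-down step (or an equivalent) for your proof to close.
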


\begin{proof}
By a contradiction argument, after passing to a  sub-sequence, we may assume 
$$ \lim_{n \to \infty} \rho_n =\infty \text{ and } d = \lim_{n \to \infty} \rho_n /R_n \in [0, 1]. $$

%\begin{equation}
%\label{eq;I-n-t} |\gm_n(t)|^2 \ge 2h |t|^2 + \rho_n^2, \; \forall t \in [T_n^-, T_n^+].
%\end{equation}
First let's consider the case that $d >0$. By Lemma \ref{lem;I-ge}, $R_n^2 \ge 2h (T^{\pm}_n)^2 + \rho_n^2$. This implies
$$ |T_n^{\pm}| \le \frac{1}{\sqrt{2h}} \left( 1 -\left( \frac{\rho_n}{R_n} \right)^2 \right)^{\ey} R_n. $$
By the energy identity,
\begin{equation*} 
\begin{aligned}
\A_h(\gm_n; T_n^-, T_n^+) & = 2 \int_{T_n^-}^{T_n^+} h + U(\gm_n(t)) \,dt \le 2 \left( h + \frac{U_{max}}{\rho_n^{\al}} \right) (T^+_n -T^-_n) \\
& \le \frac{4}{\sqrt{2h}} \left( h + \frac{U_{max}}{\rho_n^{\al}} \right)  \left( 1 -\left( \frac{\rho_n}{R_n} \right)^2 \right)^{\ey} R_n
\end{aligned}
\end{equation*} 
Our assumptions then imply the existence of a $\dl \in (0, 1)$, such that when $n$ is large enough,
\begin{equation} \label{eq;Ah-xn>Rn}
\A_h(\gm_n; T_n^-, T_n^+)  \le 2 \sqrt{2h} (1 - \dl) R_n.
\end{equation}
Meanwhile as $ |\tht_+ - \tht_-|>\pi$, we can find two instants $T_n^- < t_n^- \le t_n^+ < T_n^+$ satisfying
$$ \langle \gm_n(t_n^{\pm}), \gm_n(T_n^{\pm}) \rangle =0,$$
Then the triangle inequality implies 
$$ |\gm_n(T_n^{\pm}) - \gm_n(t_n^{\pm})| \ge |\gm_n(T_n^{\pm}) -0| \ge R_n. $$
Using this, we can get 
$$  
\A_h(\gm_n; T_n^-, T_n^+)  \ge \int_{T_n^-}^{t_n^-} + \int_{t_n^+}^{T_n^+} \ey |\dot{\gm}_n|^2 + h \,dt  \ge \sqrt{2h}   \left( \int_{T_n^-}^{t_n^-}  + \int_{t_n^+}^{T_n^+} |\dot{\gm}_n|  \,dt \right) \ge 2 \sqrt{2h} R_n.  
$$
This is a contradiction to \eqref{eq;Ah-xn>Rn}. 

Now let's consider the case $d=0$. For each $n$, set 
$$ y_n(\tau) = \rho_n^{-1} \gm_n(\rho_n \tau), \; \forall \tau \in [T_n^-/\rho_n, T_n^+/\rho_n]. $$
By a direct computation,
\begin{equation}
\label{eq;yn''} \begin{cases}
& y''_n(\tau) = \frac{\nabla U(y_n(\tau))}{\rho_n^{\al}  }; \\
& \ey |y'_n(\tau)|^2 - \frac{U(y_n(\tau))}{\rho_n^{\al} } = h.
\end{cases}
\end{equation} 

After passing to a proper sub-sequence, $y_n \to y_{\infty}$ in $C^2_{\text{loc}}(\rr, \rr^2)$, where 
$$ \begin{cases}
y''_{\infty}(\tau) & = 0; \\
\ey |y_{\infty}'(\tau)|^2 & = h. 
\end{cases}
$$
Since $|y_n(0)| = 1$, we may further assume $y_n(0) \to e^{i\tht_0}$, as $n \to \infty$, for some $\tht_0 \in \rr$. Then  
\begin{equation*}
\label{eq;y-infty}  y_{\infty}(\tau) =  \begin{cases}
e^{i \tht_0} + \sqrt{2h} e^{i(\tht_0 +\frac{\pi}{2})} \tau, & \text{ if } \tht_- < \tht_+; \\
 e^{i \tht_0} + \sqrt{2h} e^{i(\tht_0 - \frac{\pi}{2})} \tau, & \text{ if } \tht_- > \tht_+.  
\end{cases}   
\end{equation*}
As a result, 
\begin{equation}
\label{eq;s-infty} \lim_{\tau \to + \infty} |\text{Arg}(y_{\infty}(\tau)) - \text{Arg}(y_{\infty}(-\tau))| = \pi. 
\end{equation}

To get a contradiction, set 
$$ \tht_{\infty}(\tau)= \text{Arg}(y_{\infty}(\tau)), \; \tht_n(\tau)= \text{Arg}(y_n(\tau)). $$ 
Then for any $\ep>0$ and $\tau_1$, there is an integer $n_0= n_0(\ep, \tau_1)$, such that   
\begin{equation*}
|\tht_n(\tau_1)-\tht_{\infty}(\tau_1)| \le \ep/2, \; \forall n \ge n_0. 
\end{equation*}
Meanwhile $\lim_{n \to \infty} \rho_n = \infty$ implies the existence of a $\tau_{\ep}>0$ large enough, such that 
\begin{equation*}
\label{eq;tau-ep} \frac{2}{\al} \frac{\sqrt{2h + 2 U_{max} \rho_n^{-\al}}}{(2h)^{\frac{2+\al}{4}}} \tau_{\ep}^{-\frac{\al}{2}} \le \ep/2, \;\; \forall n. 
\end{equation*}

Let  $s_n(t) = \gm_n(t)/|\gm_n(t)|$. By a direct computation, 
$$ \int_{\tau_{\ep}}^{T_n^+/\rho_n} |\tht'_n(\tau)| \,d\tau = \int_{\rho_n \tau_{\ep}}^{T^+} |\dot{s}_n(t)| \,dt 
. $$
Then Lemma \ref{lem;|s-dot|} implies that for any $\tau_1 \ge \tau_{\ep}$,
$$ |\tht_n(\tau_1)-\tht_+| \le \int_{\tau_{\ep}}^{T_n^+/\rho_n} |\tht'_n(\tau)| \,d\tau \le \frac{2}{\al} \frac{\sqrt{2h + 2 U_{max} \rho_n^{-\al}}}{(2h)^{\frac{2+\al}{4}}} \tau_{\ep}^{-\frac{\al}{2}} \le \ep/2.
$$

The above estimates show
$$ |\tht_{\infty}(\tau_1) - \tht_+| \le |\tht_{\infty}(\tau_1) -\tht_n(\tau_1)| + |\tht_n(\tau_1)- \tht_+| \le \ep. $$
As this holds for any $\ep>0$ small and $\tau_1 \ge \tau_{\ep}$, we get  
$$ \lim_{\tau_1 \to +\infty} \tht_{\infty}(\tau_1)=\tht_+. $$
A similar argument as above shows 
$$ \lim_{\tau_1 \to -\infty} \tht_{\infty}(\tau_1)=\tht_-. $$
As a result, $\lim_{\tau \to +\infty}|\tht_{\infty}(\tau) - \tht_{\infty}(-\tau)| = |\tht_+ -\tht_-| >\pi$, which is a contradiction to \eqref{eq;s-infty}.

\end{proof}

The next result tells us when a minimizer under the topological constraint is collision-free. 
\begin{prop}
\label{prop;BTV}{\cite[Theorem 2]{BTV13}} Under condition \eqref{eq;U-nondeg}, for any $\psi_1 < \psi_2$ with $\psi_i  \in \mathcal{M}(\tilde{U})$, $i =1, 2$,  there exists a constant $\bar{\al} = \bar{\al}(U, \psi_1, \psi_2) \in (0, 2)$, such that when $\al >\bar{\al}$, for any $[\tht_1, \tht_2] \subset [\psi_1, \psi_2]$, if $\gm \in \Lmd_{T^{\pm}}(r_1e^{i\tht_1}, r_2 e^{i\tht_2})$ satisfies 
$$ \A(\gm; T^-, T^+) \le \A(\xi; T^-, T^+), \; \forall \xi \in \Lmd_{T^{\pm}}(r_1e^{i\tht_1}, r_2 e^{i\tht_2}),$$
then $\gm$ is collision-free. 
\end{prop}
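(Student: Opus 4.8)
The plan is to rule out interior collisions by a blow-up analysis combined with an action-decreasing local deformation, in the spirit of \cite{BTV13}.

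\emph{Step 1 (locating a putative collision).} Since $r_1, r_2>0$, a collision can only occur at some interior instant $t_0\in(T^-,T^+)$ with $\gamma(t_0)=0$. First I would show that the angular component $s(t)=\gamma(t)/|\gamma(t)|$ stays in the closed arc $\{e^{i\tht}:\tht\in[\psi_1,\psi_2]\}$: if $s$ left this arc, replacing the excursion by its reflection across the endpoint direction $e^{i\psi_i}$ would not increase $\int|\dot\gm|^2$ and would strictly decrease $\int U(\gm)\,dt$ because $\psi_i\in\mathcal{M}(\tilde U)$, contradicting minimality (one must check the reflected path still lies in $\Lmd_{T^\pm}$, which holds since the topological constraint only records $\mathrm{Arg}$ at the endpoints). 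By Proposition \ref{prop;Sundman-est}(c), $s(t)$ converges as $t\to t_0^\pm$ to points of $\mathrm{Cr}(U;\bt_\pm)$ lying in that arc, i.e.\ to critical points of $\tilde U$ in $[\psi_1,\psi_2]$. A standard second-variation argument for action minimizers (as in \cite{FT04,BFT08}) forces these asymptotic directions to be \emph{minima} of $\tilde U$; under \eqref{eq;U-nondeg} such minima are isolated with $\tilde U''>0$. Fix one, $\psi\in\mathcal{M}(\tilde U)\cap[\psi_1,\psi_2]$.

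\emph{Step 2 (blow-up to the homothetic collision--ejection solution).} Setting $y_\lmd(\tau)=\lmd^{-\frac{2}{2+\al}}\gm(t_0+\lmd\tau)$ and letting $\lmd\to0$, Proposition \ref{prop;Sundman-est}(a)--(b) gives convergence of $y_\lmd$ in $C^1_{\text{loc}}(\rr\setminus\{0\})$ to the zero-energy homothetic solution $\tau\mapsto(\kp|\tau|)^{\frac{2}{2+\al}}e^{i\psi}$ of the frozen equation $\ddot y=\nabla\big(|y|^{-\al}\tilde U(\psi)\big)$. Hence on a small interval $[t_0-\eta,t_0+\eta]$ the path $\gm$, suitably rescaled, is $C^1$-close to this explicit collision--ejection arc, and its action there is controlled by the homothetic action up to lower-order terms.

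\emph{Step 3 (action-decreasing deformation; origin of $\bar\al$).} Replace $\gm|_{[t_0-\eta,t_0+\eta]}$ by a collision-free competitor $\xi_\ep$ that remains within $\{|\tht-\psi|\le c\}$ and whose modulus stays $\ge\ep$ — for instance glue the two ejection branches through a short arc of the circle of radius $\ep$ about the origin — and optimize over $\ep>0$. Estimating the radial contribution via the homogeneity $\al$ and the angular contribution via $\tilde U(\tht)=\tilde U(\psi)+\tfrac12\tilde U''(\psi)(\tht-\psi)^2+o(|\tht-\psi|^2)$ with $\tilde U''(\psi)>0$, one gets $\A(\xi_\ep;t_0-\eta,t_0+\eta)<\A(\gm;t_0-\eta,t_0+\eta)$ for $\ep$ small, \emph{provided} $\al$ exceeds a threshold determined by the failure of the homothetic collision--ejection solution along $e^{i\psi}$ to be a local minimizer of the fixed-ends action; concretely this is an eigenvalue inequality relating $\al$, $\tilde U(\psi)$ and $\tilde U''(\psi)$. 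Taking $\bar\al(U,\psi_1,\psi_2)$ to be the maximum of these thresholds over the finitely many $\psi\in\mathcal{M}(\tilde U)\cap[\psi_1,\psi_2]$ closes the argument: for $\al>\bar\al$ an interior collision contradicts minimality, so $\gm$ is collision-free.

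\emph{Main obstacle and a caveat.} The crux is Step 3: making the competitor precise while balancing the radial cost (which improves as $\al\nearrow2$) against the angular cost controlled by $\tilde U''>0$, and extracting the sharp constant $\bar\al$ from the resulting inequality. A secondary technical point is that $\gm$ lies a priori only in the weak closure $\Lmd_{T^\pm}$, so before deforming one must verify that near an interior collision $\gm$ is genuinely a local action-minimizer among nearby $H^1$ paths; this is routine, since away from $\gm(t_0)=0$ the winding constraint is open, but it should be recorded.
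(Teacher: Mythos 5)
The paper offers no proof of this proposition: it is quoted verbatim from \cite[Theorem 2]{BTV13}, so your attempt can only be measured against that reference and on its own terms. Your sketch has the right overall architecture (blow-up at a putative interior collision, followed by an action-decreasing, class-preserving local deformation), but the steps that carry the actual content are either wrong or missing. In Step 1, the reflection argument fails: reflecting an excursion across the ray $e^{i\psi_2}$ replaces $\tilde{U}(\tht)$ by $\tilde{U}(2\psi_2-\tht)$, and the fact that $\psi_2$ is a global minimizer of $\tilde{U}$ gives no comparison between these two values. What does work is truncating the angle at $\psi_1,\psi_2$ (this is in effect what the present paper invokes at \eqref{eq;psi1-pasi2}), and one must then check the truncated path stays in the class. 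Likewise, the assertion that a ``standard second-variation argument'' forces the asymptotic collision directions to be minima of $\tilde{U}$ is not standard and is not justified; Proposition \ref{prop;Sundman-est}(c) only yields convergence to critical points at a prescribed level.

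The decisive gap is Step 3, which is precisely where the constant $\bar{\al}$ must be produced, and which you acknowledge you have not carried out. The proposition concerns fixed-time minimizers, so your competitor (two ejection branches glued through a circular arc of radius $\ep$) must be reparametrized over the same interval $[T^-,T^+]$, kept in the same class $\Lmd_{T^{\pm}}$ (the direction of the detour around the origin matters for the winding), and shown by an explicit estimate to satisfy $\A(\xi_\ep;T^-,T^+)<\A(\gm;T^-,T^+)$ once $\al$ exceeds a computable threshold; asserting that ``an eigenvalue inequality relating $\al$, $\tilde{U}(\psi)$ and $\tilde{U}''(\psi)$'' will emerge is a conjecture, not an argument. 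It also does not match the mechanism of the cited source, where $\bar{\al}$ arises from comparing the action of the inner homothetic collision--ejection solution with that of outer, collision-avoiding parabolic connections, the non-degeneracy \eqref{eq;U-nondeg} being used to make the corresponding rest points on the McGehee collision manifold hyperbolic so that such outer connections exist with controlled asymptotics. As written, the proposal is an outline of a plausible strategy; the threshold $\bar{\al}$, which is the entire point of the statement, is never actually obtained.
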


\begin{proof}[Proof of Theorem \ref{thm;bi-hyper}]
Without loss of generality, let's assume $\tht_- < \tht_+$. Then we can find a largest $\psi_1 \le \tht_-$ and a smallest $\psi_2 \ge \tht_+$  satisfying $\psi_i \in \mathcal{M}(\tilde{U})$, $i =1, 2$. Let $\bar{\al}$ be the constant given by Proposition \ref{prop;BTV}.

Choose a sequence of positive numbers $\{ R_n \nearrow \infty \}_{n =1}^{\infty}$ with each $R_n>1$. By Lemma \ref{lem;free-minimizer-top}, there is a sequence of paths $\{ \gm_n = r_n e^{i \tht_n} \in \Lmd_{T_n^{\pm}}(R_n, \tht_{\pm}) \}_{n =1}^{\infty}$ with each $\gm_n$ being an $h$-free-time minimizer under topological constraint. By Proposition \ref{prop;BTV}, when $\al \in (\bar{\al}, 2)$, each $\gm_n$ is a collision-free $h$-energy solution of \eqref{eq;aniso-kepler}. Meanwhile $\psi_i$, $i =1, 2$, being global minimizers of $\tilde{U}$ imply
\begin{equation}
\label{eq;psi1-pasi2} \tht_n(t) \in [\psi_1, \psi_2], \; \forall t \in [T_n^-, T_n^+], \; \forall n. 
\end{equation}

Let's further assume \eqref{eq;rho-n} holds, for each $n$. As $\tht_+ -\tht_- >\pi$, by Lemma \ref{lem;rho-bound}, after passing to a proper sub-sequence, we have  
\begin{equation}
	\label{eq;rho-n-bound} \rho = \lim_{n \to \infty} \rho_n < \infty \text{ and } \rho_n = |\gm_n(0)| \le 2\rho, \; \forall n.
\end{equation}

Given any $[T^{-}, T^+] \subset [T^-_n, T_n^+]$ satisfying $|\gm_n(T^{\pm})| >1$. By property (a) and (c) in Lemma \ref{lem;phh-le}, there exist two paths
$$ \xi^-_n \in H^1_{S_n^-, 0}(\gm_n(T^-), 0) \text{ with } \frac{\xi^-_n(t)}{|\xi^-_n(t)|} = \frac{\gm_n(T^-)}{|\gm_n(T^-)|},\, \; \forall t; $$
$$ \xi^+_n \in H^1_{0, S_n^+}(0, \gm_n(T^+)) \text{ with } \frac{\xi^+_n(t)}{|\xi^-_n(t)|} = \frac{\gm_n(T^+)}{|\gm_n(T^+)|},\, \; \forall t,$$
and two positive constants $C_1$ and $C_2$ independent of $n$, such that 
$$ \A_h(\xi_n^-; S_n^-, 0) + \A_h(\xi_n^+; 0, S^+_n) \le C_1 + C_2\max\{|\gm_n(T^-)|, |\gm_n(T^+)| \}. $$
Since  $\gm_n|_{[T^-, T^+]}$ is an $h$-free-time minimizer under topological constraint, this implies 
\begin{equation}
	\label{eq;phih-le-2}  
		\A_h(\gm_n; T^-, T^+) \le C_1 + C_2\max\{|\gm_n(T^-)|, |\gm_n(T^+)| \}
\end{equation}
Using this we can show 
$$ \lim_{n \to \infty}T_n^{\pm} = \pm \infty. $$
We only give the details for $T^{+}_n$. By a similar argument as in the proof of Theorem \ref{thm;hyper},
\begin{equation} \label{eq;Tn+}
	\begin{aligned}
		\left( |\gm_n(T_n^+)| - |\gm_n(0)| \right)^2 & \le |\gm_n(T_n^+) - \gm_n(0)|^2 \le \left( \int_0^{T^+_n} |\dot{\gm}_n| \,dt \right)^2  \le T_n^+ \int_0^{T_n^+} |\dot{\gm}_n|^2 \,dt \\
		&  = T_n^+ \A_h(\gm_n; 0, T^+_n)  \le T_n^+\A_h(\gm_n; T_n^-, T_n^+) \\
		& \le T_n^+(C_1 + C_2 R_n).
	\end{aligned}
\end{equation}
This implies 
$$ T_n^+ \ge \frac{(R_n - \rho_n)^2}{C_1 + C_2 R_n} \to \infty, \text{ as } n \to \infty. $$

We claim for any $T>0$, there exist two positive constants $C_3, C_4$ depending on $C_1, C_2$ and $\rho$, but independent of $T$ and $n$, such that
\begin{equation} \label{eq;gmn-T}
	 |\gm_n(\pm T)|  \le C_3 + C_4 T. 
\end{equation}

The claim is trivial if $|\gm_n(T)| \le 1$. Let's assume $|\gm_n(T)| >1$. Notice that for $n$ large enough, there is a unique $S <0$ with $|\gm_n(S)| = |\gm_n(T)|>1$. Then similar estimates as in \eqref{eq;Tn+} show 
\begin{equation}
 \begin{aligned}
 (|\gm_n(T)|- \rho_n)^2 & \le |\gm_n(T)- \gm_n(0)|^2 \le \left( \int_0^T |\dot{\gm}_n| \right)^2 \le T \int_0^T |\dot{\gm}_n|^2 \,dt \\
 & = T \A_h(\gm_n; 0, T) \le T \A_h(\gm_n; S, T) \le T(C_1 + C_2 |\gm_n(T)|),
 \end{aligned}
 \end{equation} 
where the last inequality follows from \eqref{eq;phih-le-2}. Solving the above inequality for $|\gm_n(T)|$ show
 $$  |\gm_n(T)| \le \frac{2 \rho_n + C_2 T + \sqrt{4(C_ 1 + C_2 \rho_n) T + C_2^2 T^2}}{2} \le \frac{4\rho + C_2 T + \sqrt{4(C_1 +2C_2 \rho)T + C_2 T^2}}{2}. 
 $$
A similar argument shows the above inequality holds for $|\gm_n(-T)|$ as well. This proves our claim. 

Then \eqref{eq;phih-le-2} and  \eqref{eq;gmn-T} imply 
 $$ \int_{-T}^T |\dot{\gm}_n|^2 \,dt = \A_h(\gm_n; -T, T) \le C_1 +C_2(C_3 +C_4 T). $$
This means for any $T>0$, $\{\gm_n|_{[-T, T]}\}$ is a bounded sequence in $H^1_{-T, T}$, for any $T>0$, when $n$ is large enough. By a diagonal argument, similar as in the proof of Theorem \ref{thm;hyper}, we can find a 
$$ \gm(t) = r(t)e^{i\tht(t)} \in H^1(\rr, \cc),$$
such that $\gm_n$ converges to $\gm$ weakly in $H^1_{loc}$-norm and strongly in $L^{\infty}_{loc}$-norm. Then \eqref{eq;psi1-pasi2} implies 
\begin{equation}
\label{eq;psi1-psi2-tht} \tht(t) \in [\psi_1, \psi_2], \; \forall t \in \rr. 
\end{equation}

\begin{lem}
\label{lem;gm-free-minimizer-2}  $\forall T>0$, $\gm|_{[-T, T]}$ is an $h$-free-time minimizer under topological constraint, i.e.,
$$ \A_h(\gm; -T, T) = \inf \{ \A_h(\xi): \; \forall \xi \in \Lmd( r(-T) e^{i \tht(-T)}, r(T)e^{i \tht(T)}) \}. $$
\end{lem}
Postpone the proof of the above lemma for a moment. By Proposition \ref{prop;BTV},  $\gm|_{[-T, T]}$ must be collision-free, and hence an $h$-energy solution of \eqref{eq;aniso-kepler}. Then $\gm_n(t)$ converges to $\gm(t)$ in $C^2_{loc}$. For any $\ep>0$ and $t_1>0$, we can find a $n_0=n_0(\ep, t_1)$ large enough, such that 
$$ |\tht_n(t_1) -\tht(t_1)| \le \ep/2, \; \forall n \ge n_0. $$
By \eqref{eq;rho-n-bound}, we can find a $t_{\ep}>0$ large enough, such that 
$$ \frac{2}{\al} \frac{\sqrt{2h \rho_n^{\al} + 2U_{max}}}{(2h)^{\frac{2+\al}{4}}} t_{\ep}^{-\frac{\al}{2}} \le \frac{\ep}{2}, \;\; \forall n. $$
Then Lemma \ref{lem;|s-dot|} implies that for any $t_1 \ge t_{\ep}$, 
$$ |\tht_n(t_1)-\tht_n(T^+_n)| \le \int_{t_{\ep}}^{T_n^+} |\dot{\tht}_n(t)|\,dt \le  \frac{2}{\al} \frac{\sqrt{2h \rho_n^{\al} + 2U_{max}}}{(2h)^{\frac{2+\al}{4}}} t_{\ep}^{-\frac{\al}{2}} \le \frac{\ep}{2}.
$$
These estimates then imply
$$ |\tht(t_1)- \tht_+| \le |\tht(t_1) - \tht_n(t_1)| + |\tht_n(t_1)- \tht_n(T_n^+)| \le \ep. $$
As the above inequality holds for any $\ep>0$ and $t_1 \ge t_{\ep}$, we get 
$$ \lim_{t \to +\infty} \tht(t) = \tht_+. $$
A similar argument shows 
$$ \lim_{t \to -\infty} \tht(t) = \tht_-. $$
The desired result now follows from Property (b) in Theorem \ref{thm;asym}.
\end{proof}

\begin{proof}[Proof of Lemma \ref{lem;gm-free-minimizer-2}]
The proof is similar to Lemma \ref{lem;gm-free-minimizer}. By a contradiction argument, let's assume there is an $\ep>0$ and a path 
$$ \xi \in \Lmd_{S^{\pm}}( r(-T) e^{i \tht(-T)}, r(T)e^{i \tht(T)}), $$
such that 
\begin{equation*}
\label{eq;4-ep}  \A_h(\gm; -T, T) - A_h(\xi; S^-, S^+) = 4 \ep  >0. 
\end{equation*}
For each $n$, set $\dl_n= |\gm_n(T) - \gm(T)|$ and $\dl_n^* = |\gm_n(-T) - \gm_n(-T)|$ and define a path
\begin{equation*}
\eta_n(t) = \begin{cases}
\xi(S^-) + \frac{t-S^- +\dl_n^*}{\dl_n^*} (\xi(S^-) - \gm_n(-T)), & \text{ when } t \in [S^- -\dl_n^*, S^-]; \\
\xi(t), & \text{ when } t \in [S^-, S^+]; \\
\xi(S^+) + \frac{t - S^+}{\dl_n} (\gm_n(T) -\xi(S^+)), & \text{ when } t \in [S^+, S^+ +\dl_n]. 
\end{cases}
\end{equation*}
Such a definition implies $\eta_n \in \Lmd(\gm_n(-T), \gm_n(T))$. 

As both $\dl_n$ and $\dl_n^*$ goes to zero, when $n \to \infty$, we can find a constant $C_1>0$ independent of $n$, such that for $n$ large enough
\begin{equation*}
\A_h(\eta_n; S^--\dl_n^*, S^-) \le C_1 \dl_n^* \le \ep \text{ and } \A_h(\eta_n; S^+, S^+ + \dl_n) \le C_1 \dl_n \le \ep. 
\end{equation*}
Combining these with the lower semi-continuity of the action functional, for $n$ large enough,
$$ \begin{aligned}
\A_h(\gm_n; & -T, T) \ge \A_h(\gm; -T, T) -\ep = \A_h(\xi; S^-, S^+) +3\ep \\
& = \A_h(\eta_n; S^- -\dl_n^*, S^+ +\dl_n) - \A_h(\eta_n;  S^- -\dl_n^*, S^-) - \A_h(\eta_n; S^+, S^+ +\dl_n) +3\ep \\
& \ge \A_h(\eta_n; S^- -\dl_n^*, S^+ +\dl_n) +\ep.
\end{aligned}
$$
This is a contradiction to the fact that $\gm_n$ is an $h$-free-time minimizer under topological constraint. 

\end{proof}

%%%%%%%%%%%%%%%%%%%%%%%%%%%%%%%%%%%%%%%%

\section{rule out collision for fixed-time local minimizers} \label{sec;coll-free}

The last two sections will focus on Gutzwiller's anisotropic Kepler problem. In this section we give a proof of Proposition \ref{prop;coll-free-2}. The approach follows ideas from celestial mechanics. First we show collisions must be isolated following the approach given by Chenciner in \cite{C02}, then we will blow-up the solution near an isolated collision singularity following Ferrario and Terracini \cite{FT04}, at last we get a contradiction by a local deformation following Montgomery \cite{Mont99a}. In the first two steps, we only assume condition \eqref{eq;U-homo}.

Let $x \in H^1_{t_1, t_2}(p, q)$ be a fixed-time local minimizer of $\A$ in the rest of the section.

\begin{lem} \label{lem;coll-iso} 
Under condition \eqref{eq;U-homo}, $coll(x) = \{ t \in (t_1, t_2): \; x(t) =0\}$ is isolated in $[t_0, t_1]$. 
\end{lem}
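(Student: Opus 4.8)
The plan is to show that the collision set $coll(x)$ has no accumulation point inside $(t_1,t_2)$; since $coll(x)$ is closed (by continuity of $x$) and contained in a compact interval, this yields that it is finite, hence isolated. Suppose, for contradiction, that $t^* \in (t_1,t_2)$ is an accumulation point of $coll(x)$. The strategy is to use the fact that $x$ is a \emph{local} minimizer together with the asymptotic estimates of Proposition \ref{prop;Sundman-est}: near any collision instant $t_0 \in coll(x)$ we have $r(t)\simeq (\kp_{\pm}|t-t_0|)^{2/(2+\al)}$ and $U(s(t))\to \bt_{\pm}$, with the energy-type quantity $\Gamma$ (or rather the analogue for the fixed-time problem) controlled. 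In particular, the action $\A(x;\cdot,\cdot)$ restricted to a short interval around a collision is finite and, more importantly, behaves like $\int |t-t_0|^{-\al/(2+\al)}\,dt$, which is integrable for $\al\in(0,2)$; this finiteness is what makes the deformation argument possible but is not yet a contradiction.

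The key step I would carry out is a \textbf{local comparison/deformation}: if collisions accumulate at $t^*$, then on arbitrarily small intervals $[a,b]\ni t^*$ the path $x$ makes infinitely many excursions into and out of the origin. I would compare $\A(x;a,b)$ with the action of a competitor $\xi$ that is obtained by a small $H^1$-perturbation — for instance, replacing $x$ on $[a,b]$ by a rescaled or slightly lifted path that avoids the origin (or has only one collision), using the geodesic/radial building blocks from Lemma \ref{lem;phh-le}-type estimates. The point is that each isolated collision contributes a definite ``cost'' that can be lowered by desingularizing, and if there are infinitely many of them the total gain is not infinitesimal, contradicting the assumption that $x$ is a local minimizer (the competitor stays within $\|\xi-x\|_{H^1}<\dl$ because $[a,b]$ can be taken tiny and the $H^1$-norm of the modification on a short interval is small). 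Concretely, I expect to use a Sundman-type lower bound on the action near a collision versus an upper bound for a collision-free comparison arc between the same endpoints, showing strict inequality.

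The main obstacle I anticipate is making the ``infinitely many collisions cost a non-negligible amount'' argument quantitative while keeping the perturbation $H^1$-small. One has to rule out the pathology where the excursions shrink fast enough that both the cost of collisions and the available gain tend to zero at comparable rates. This is exactly where the homogeneity $\al\in(0,2)$ and the precise exponents in Proposition \ref{prop;Sundman-est} enter: the action of a near-collision arc over an interval of length $\ell$ scales like $\ell^{\,2/(2+\al)}$ (coming from $r\sim \ell^{2/(2+\al)}$ and $\dot r^2 \sim \ell^{-2\al/(2+\al)}$ integrated), and one needs the deficit between the collision arc and the minimizing collision-free arc with the same boundary data to be a fixed fraction of this. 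I would isolate a single collision instant $t_0$ near $t^*$, pick $a<t_0<b$ with $x(a),x(b)$ close to $0$ but nonzero, and estimate $\A(x;a,b)-\phi(x(a),x(b))$ (where $\phi$ is the fixed-time action potential) from below by a positive multiple of $|b-a|^{2/(2+\al)}$ times a constant depending only on $\al$ and $\min_{\bs} U, \max_{\bs}U$; combined with $\|x|_{[a,b]} - \xi\|_{H^1}\to 0$ as $b-a\to 0$, this contradicts local minimality. Since the statement only assumes \eqref{eq;U-homo}, I would avoid any use of rotational symmetry here and rely purely on these scaling estimates.
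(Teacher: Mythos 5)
Your proposed route has a genuine gap, and it is a structural one rather than a missing estimate. The core of your plan is the claim that ``each isolated collision contributes a definite cost that can be lowered by desingularizing,'' so that a competitor with fewer (or no) collisions beats $x$ locally. But Lemma \ref{lem;coll-iso} is asserted under condition \eqref{eq;U-homo} alone, and the paper is explicit that under \eqref{eq;U-homo} alone a fix-time local minimizer \emph{need not} be collision-free (removing collisions is exactly the content of the much harder Propositions \ref{prop;coll-free-1} and \ref{prop;coll-free-2}, which require the extra hypotheses \eqref{eq;U-aniso} or \eqref{eq;U-inv}). So the quantitative inequality you want --- a strictly positive action deficit for a collision arc versus a collision-free arc with the same endpoints --- is false in general in the setting of this lemma; if it were true you would have proved the stronger collision-free statement with no extra hypotheses. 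Your fallback of merging infinitely many collisions into a single one is not supported by any argument for why the merged path has strictly smaller action, and the scaling heuristics you give (action $\sim \ell^{2/(2+\al)}$ per excursion) only bound the \emph{size} of each contribution, not its sign relative to a competitor. A further technical issue: to apply Proposition \ref{prop;Sundman-est} uniformly over the accumulating arcs you would need the constants $\kp_{\pm}$ (hence the energies of the arcs) to be uniformly controlled, and you never establish that the energy is the same constant on every component of $(t_1,t_2)\setminus coll(x)$.

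The paper's proof avoids comparison arguments entirely. It first notes that finiteness of the action forces $coll(x)$ to have measure zero, then invokes the fact (due to Chenciner, valid for minimizers) that the energy $h(t)=\tfrac12|\dot x|^2-U(x)$ is a single constant on all of $(t_1,t_2)\setminus coll(x)$. With that, the Lagrange--Jacobi identity (Lemma \ref{lem;Lag-Jacobi}) gives $\ddot I = 4h + 2(2-\al)U(s)/|x|^{\al} \to +\infty$ as $t\to t_0$ for an accumulation point $t_0$ of collisions; but on each excursion interval $(a_i,b_i)$ with $I(a_i)=I(b_i)=0$ and $I>0$ inside, $I$ must have an interior critical point, which is incompatible with $\ddot I$ being eventually positive (indeed unbounded) there. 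This convexity contradiction is the missing idea in your proposal, and it uses only \eqref{eq;U-homo} and $\al\in(0,2)$, with no competitor construction at all.
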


\begin{proof}
Since $\A(x; t_1, t_2)$ is finite, the Lebesgue measure of $coll(x)$ must be zero. Then for any $(a, b) \subset (t_1, t_2)\setminus coll(x)$, $x|_{(a, b)}$ is smooth solution of \eqref{eq;aniso-kepler}. This means the energy function below is well-defined for all $t \notin coll(x)$ and therefore almost everywhere in $(t_1, t_2)$
$$ h(t) = \ey |\dot{x}(t)|^2 -U(x(t)). $$ 
Since $x$ is a minimizer, we can further show $h(t)$ is constant for all $t \notin coll(x)$ (see \cite[page 7]{C02}). 

Assuming $t_0 \in coll(x)$ is not isolated, there must be a sequence of non-intersecting intervals $(a_i, b_i)$ with both $a_i$ and $b_i$ converging to $t_0$, as $i$ goes to infinity, and satisfying
\begin{equation}
\label{eq;I-a-b} x(a_i) = x(b_i) =0, \; x(t) \ne 0, \forall t \in (a_i, b_i).
\end{equation}
By Lemma \ref{lem;Lag-Jacobi}, 
$$ \ddot{I}(x(t)) = 4h(t) + \frac{2 (2-\al)U(s(t))}{|x(t)|^{\al}} \to \infty, \text{ as } t \to t_0.$$ 
Meanwhile by \eqref{eq;I-a-b}, we can find a sequence of instants $t_i \in (a_i, b_i)$ satisfying $\dot{I}(t_i) =0$, which is absurd. 
\end{proof}

Without loss of generality, let's assume $0 \in (t_1, t_2)$ and $x(0)=0$. By Lemma \ref{lem;coll-iso}, there is a $\dl>0$ small enough such that both $x|_{(-\dl, 0)}$ and $x|_{(0, \dl)}$ are collision-free solutions of \eqref{eq;aniso-kepler}.

\begin{dfn}
\label{dfn;blow-up} For any $\lmd>0$, we define the $\lmd$ blow-up of $x|_{(-\dl, \dl)}$ as 
$$ x^{\lmd}(t) = \lmd^{-\frac{2}{2+\al}} x(\lmd t), \; t \in (-\dl/\lmd, \dl/\lmd). $$
\end{dfn}

\begin{lem}
\label{lem;action-homo} $\A(x^{\lmd}; -\dl/\lmd, \dl/\lmd) = \lmd^{-\frac{2-\al}{2+\al}} \A(x; -\dl, \dl)$. 
\end{lem}

\begin{proof}
	As the potential function is $-\al$ homogeneous, this follows from a direct computation.
\end{proof}

\begin{lem}
\label{lem;blow-up} If $s(\pm \lmd_n) \to s^{\pm}$, as $\lmd_n \to 0$. For any $T>0$, the sequences $x^{\lmd_n}$ and $\dot{x}^{\lmd_n}$ converge to $\bar{x}$ and its derivative $\dot{\bar{x}}$ uniformly on $[-T, T]$ and on compact subsets of $[-T, T] \setminus \{0\}$ respectively, where $\bar{x}(t)$ is defined as
\begin{equation}
\label{eq;xbar} \bar{x}(t) = \begin{cases}
(\kp_+ t)^{\frac{2}{2 +\al}} s^+, & \text{ if } t \ge 0; \\
(-\kp_- t)^{\frac{2}{2 +\al}} s^-, & \text{ if } t \le 0.
\end{cases}
\end{equation}
\end{lem}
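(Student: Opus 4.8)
\textbf{Proof proposal for Lemma \ref{lem;blow-up}.}

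The plan is to combine the Sundman-type asymptotic estimates of Proposition \ref{prop;Sundman-est} with the scaling invariance of the equation and energy to identify the limit of the blow-up sequence, and then upgrade pointwise/weak convergence to uniform $C^1$ convergence away from $0$ via the equation itself. First I would record that since $x(0)=0$ and $x|_{(-\dl,0)}$, $x|_{(0,\dl)}$ are collision-free solutions (Lemma \ref{lem;coll-iso}), Proposition \ref{prop;Sundman-est} applies at $t_0 = 0$: there are constants $\kp_{\pm}>0$ with $r(t) \simeq (\kp_{\pm}|t|)^{\frac{2}{2+\al}}$ and $\dot r(t) \simeq \pm \frac{2\kp_{\pm}}{2+\al}(\kp_{\pm}|t|)^{\frac{-\al}{2+\al}}$ as $t \to 0^{\pm}$, and $U(s(t)) \to \bt_{\pm} = \ey(\tfrac{2}{2+\al}\kp_{\pm})^2$. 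Note that $\bar x$ as defined in \eqref{eq;xbar} is precisely the zero-energy homothetic (ejection/collision) solution with these constants and directions $s^{\pm}$, so the content of the lemma is that the blow-ups converge to it.

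The key steps, in order. (1) Scaling bookkeeping: by Definition \ref{dfn;blow-up}, $x^{\lmd_n}(t) = \lmd_n^{-\frac{2}{2+\al}}x(\lmd_n t)$ solves the same equation \eqref{eq;aniso-kepler} on $(-\dl/\lmd_n, \dl/\lmd_n)$, and its energy is $\lmd_n^{\frac{2(2-\al)}{2+\al}} h \to 0$ as $\lmd_n \to 0$, since $\al \in (0,2)$. Writing $x^{\lmd_n} = r^{\lmd_n} s^{\lmd_n}$ with $r^{\lmd_n}(t) = \lmd_n^{-\frac{2}{2+\al}} r(\lmd_n t)$ and $s^{\lmd_n}(t) = s(\lmd_n t)$, the Sundman estimates give, for each fixed $t>0$, $r^{\lmd_n}(t) \to (\kp_+ t)^{\frac{2}{2+\al}}$ and $s^{\lmd_n}(t) = s(\lmd_n t) \to s^+$ by hypothesis, so $x^{\lmd_n}(t) \to \bar x(t)$ pointwise on $(0,\infty)$, and symmetrically on $(-\infty,0)$; at $t=0$ both sides vanish. (2) Equicontinuity and a uniform bound: on $[-T,T]$, the energy relation $\ey|\dot x^{\lmd_n}|^2 = U(x^{\lmd_n}) + h_n$ together with a lower bound on $|x^{\lmd_n}(t)|$ away from $0$ (from the convergence of $r^{\lmd_n}$ and the fact that $r^{\lmd_n}$ stays bounded below on $[\ep,T]$ uniformly in $n$, by the Sundman asymptotics again) bounds $|\dot x^{\lmd_n}|$ uniformly on compact subsets of $[-T,T]\setminus\{0\}$; near $0$ one uses instead the Sundman bound $\dot r \simeq \pm c |t|^{-\al/(2+\al)}$ which, after rescaling, gives $|\dot x^{\lmd_n}(t)| \lesssim |t|^{-\al/(2+\al)}$, integrable, hence $x^{\lmd_n}$ is uniformly $C^{0,\,\frac{2}{2+\al}}$-Hölder on $[-T,T]$. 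Arzelà–Ascoli then upgrades pointwise convergence to uniform convergence on $[-T,T]$. (3) On any $[\ep,T]$ (and $[-T,-\ep]$), the limit avoids the singularity, so $\nabla U(x^{\lmd_n}) \to \nabla U(\bar x)$ uniformly; bootstrapping $\ddot x^{\lmd_n} = \nabla U(x^{\lmd_n})$ gives $C^2_{loc}$ convergence on $[-T,T]\setminus\{0\}$, in particular uniform convergence of $\dot x^{\lmd_n}$ to $\dot{\bar x}$ there, and one checks $\bar x$ solves the zero-energy equation on each half-line, confirming the formula \eqref{eq;xbar}.

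The main obstacle I expect is the behavior near $t=0$: the velocities $\dot x^{\lmd_n}$ genuinely blow up there (like $|t|^{-\al/(2+\al)}$), so one cannot hope for uniform $C^1$ convergence across $0$ — only uniform $C^0$ convergence on $[-T,T]$ plus $C^1_{loc}$ (indeed $C^2_{loc}$) convergence on $[-T,T]\setminus\{0\}$, which is exactly what the statement claims. Making the uniform-$C^0$ argument near $0$ rigorous requires the uniform-in-$n$ Hölder estimate, and the cleanest route is to note that $\int_0^{\ep} |\dot x^{\lmd_n}(t)|\,dt = \lmd_n^{-\frac{2}{2+\al}} \int_0^{\lmd_n \ep} |\dot x(\tau)|\,d\tau$ and to control the inner integral via the Sundman estimate for $\dot r$ together with the bound on $\int |\dot s|$ (Lemma-type estimates as in Proposition \ref{prop;Sundman-est}(a)), uniformly for $\ep$ small; the radial part dominates and gives the $O(\ep^{2/(2+\al)})$ modulus needed. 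One should also remark that a priori the limit of a subsequence of $\{x^{\lmd_n}\}$ exists by compactness, and the identification of that limit with $\bar x$ via step (1) forces the whole sequence to converge, so no diagonal/subsequence caveat survives in the final statement.
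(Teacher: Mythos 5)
The paper itself disposes of this lemma with a one-line citation to Lemma (7.4) of Ferrario--Terracini, so you are in effect reconstructing that argument; your outline (pointwise identification of the limit via the Sundman estimates, uniform $C^0$ convergence through $t=0$ by an equicontinuity/H\"older bound, then a $C^2_{loc}$ bootstrap away from $0$ using $\ddot x^{\lmd_n}=\nabla U(x^{\lmd_n})$ and Arzel\`a--Ascoli) is exactly the standard route. The scaling bookkeeping is right except for one slip: the energy of $x^{\lmd}$ is $\lmd^{2\al/(2+\al)}h$, not $\lmd^{2(2-\al)/(2+\al)}h$; this is harmless since both exponents are positive, so the rescaled energies still tend to $0$.

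The one genuine gap is the assertion in your step (1) that $s(\lmd_n t)\to s^+$ for each fixed $t>0$ ``by hypothesis''. The hypothesis only gives convergence of $s(\pm\lmd_n)$, i.e.\ at $t=\pm1$; in general $s(t)$ need \emph{not} converge as $t\to0^{\pm}$ (it can drift along a continuum of critical points, which is precisely why the lemma is stated for a sequence $\lmd_n$ rather than for $t\to0$). To pass from $t=1$ to arbitrary fixed $t>0$ you need the extra estimate $r^{(2+\al)/2}|\dot s|\to0$, equivalently $t\,|\dot s(t)|\to0$ as $t\to0^+$. This follows from parts (a) and (b) of Proposition \ref{prop;Sundman-est} together with the energy identity \eqref{eq;energy-polar}: indeed $\ey r^{2+\al}|\dot s|^2=\ey r^{\al}(2h-\dot r^2)+U(s)\to-\bt_++\bt_+=0$. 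Then $|s(\lmd_n t)-s(\lmd_n)|\le\int|\dot s|\le o(1)\,|\log t|\to0$ for fixed $t$, which gives the pointwise identification; the same estimate is what justifies your claim that ``the radial part dominates'' in the H\"older bound, since $r^2|\dot s|^2=o(r^{-\al})=o(\dot r^2)$. (Alternatively one can argue on the limit: any $C^0_{loc}$ subsequential limit $\bar r\bar s$ has $\bar r(t)=(\kp_+t)^{2/(2+\al)}$ and $U(\bar s)\equiv\bt_+$ by Sundman (b), and the zero-energy identity then forces $\dot{\bar s}\equiv0$, so $\bar s\equiv s^+$ by its value at $t=1$.) With this inserted your argument is complete and coincides with the cited proof.
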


\begin{rem}
By a simple computation, $\bar{x}(t)$ is a zero energy solution of \eqref{eq;aniso-kepler}. 
\end{rem}

\begin{proof}
	With Proposition \ref{prop;Sundman-est}, the result can be proven as \textbf{(7.4)} in \cite{FT04}. 
\end{proof}

\begin{lem}
\label{lem;blow-up-homo} For any $T>0$, there exist{}s a sequence of positive integers $\{N_n \nearrow \infty\}$, as $n \to \infty$, such that for any $\phi \in H^1_{-T, T}(\rr^d)$ which are $C^1$ in small neighborhoods of $T$ and $-T$, 
$$ \lim_{n \to \infty} \A(x^{\lmd_n} + \phi + \psi_n; -T, T) - \A(x^{\lmd_n};-T, T) = \A(\bar{x} + \phi; -T, T) - \A(\bar{x}; -T, T),$$
where $\psi_n \in H^1_{-T, T}(\rr^d)$ is 
\begin{equation}
\label{eq;psi-n} \psi_n(t) = \begin{cases}
N_n(T-t) (\bar{x}(t) - x^{\lmd_n}(t)), \; & \text{ if } t \in [T-N_n^{-1}, T]; \\
\bar{x}(t) - x^{\lmd_n}(t), \; & \text{ if } t \in [-T + N_n^{-1}, T - N_n^{-1}]; \\
N_n(T +t) (\bar{x}(t) - x^{\lmd_n}(t)), \; & \text{ if } t \in [-T, -T + N_n^{-1}].
\end{cases}
\end{equation}
\end{lem}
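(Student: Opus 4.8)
\textbf{Proof proposal for Lemma \ref{lem;blow-up-homo}.}

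The plan is to split the action on $[-T,T]$ into the bulk part $[-T+N_n^{-1},T-N_n^{-1}]$, where $\psi_n$ exactly closes the gap between $x^{\lmd_n}$ and $\bar x$, and the two boundary collars $[T-N_n^{-1},T]$ and $[-T,-T+N_n^{-1}]$ of length $N_n^{-1}$, where $\psi_n$ interpolates linearly. On the bulk, $x^{\lmd_n}+\phi+\psi_n = \bar x + \phi$ identically, so $\A(x^{\lmd_n}+\phi+\psi_n;\cdot) = \A(\bar x+\phi;\cdot)$ and $\A(x^{\lmd_n};\cdot)$ differs from $\A(\bar x;\cdot)$ only by $\A(x^{\lmd_n};\text{collars}) - \A(\bar x;\text{collars})$ plus the bulk-vs-full bookkeeping; so the whole statement reduces to showing that the four collar contributions
$$ \A(x^{\lmd_n}+\phi+\psi_n;\text{collars}),\quad \A(x^{\lmd_n};\text{collars}),\quad \A(\bar x+\phi;\text{collars}),\quad \A(\bar x;\text{collars}) $$
all tend to $0$ as $n\to\infty$, provided $N_n$ is chosen growing slowly enough. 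The first step is therefore to set up this decomposition carefully and record that it suffices to kill the collar terms.

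Next I would estimate each collar term. For $\A(\bar x;\text{collars})$ and $\A(\bar x+\phi;\text{collars})$ this is immediate: $\bar x$ and $\phi$ are bounded with bounded derivatives near $\pm T$ (by hypothesis $\phi$ is $C^1$ near $\pm T$, and $\bar x$ is smooth there since $\pm T\ne 0$), and $U$ is bounded on the relevant compact set away from the origin, so the integrand is $O(1)$ and the integral over an interval of length $N_n^{-1}$ is $O(N_n^{-1})\to 0$ — independently of how fast $N_n$ grows. The real work is the two terms involving $x^{\lmd_n}$ and $\psi_n$. On the collar $[T-N_n^{-1},T]$ we have $\psi_n(t) = N_n(T-t)(\bar x(t)-x^{\lmd_n}(t))$, hence $\dot\psi_n = -N_n(\bar x-x^{\lmd_n}) + N_n(T-t)(\dot{\bar x}-\dot x^{\lmd_n})$. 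By Lemma \ref{lem;blow-up}, $\ep_n := \sup_{[-T,T]}|x^{\lmd_n}-\bar x|\to 0$ and, on a fixed neighborhood of $T$ bounded away from $0$, also $\ep_n' := \sup|\dot x^{\lmd_n}-\dot{\bar x}|\to 0$. Thus on the collar $|\dot\psi_n|\le N_n\ep_n + \ep_n'$, and $|\dot x^{\lmd_n}|$, $|\dot{\bar x}|$, $|\dot\phi|$ are $O(1)$; the kinetic part of the integrand is then $O((N_n\ep_n)^2 + 1)$, and integrating over length $N_n^{-1}$ gives $O(N_n\ep_n^2 + N_n^{-1})$. The potential part: on the collar $x^{\lmd_n}+\phi+\psi_n$ stays in a fixed compact set away from the origin for $n$ large (it is within $O(\ep_n)$ of the non-vanishing path $\bar x + \phi$ there), so $U$ of it is $O(1)$ and contributes $O(N_n^{-1})$. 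Hence every collar term is $O(N_n\ep_n^2 + N_n^{-1})$.

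Finally I would choose $N_n$: since $\ep_n\to 0$, pick $N_n\nearrow\infty$ slowly enough that $N_n\ep_n^2\to 0$ — e.g. $N_n = \lfloor \ep_n^{-1}\rfloor$ works, giving $N_n\ep_n^2 \le \ep_n\to 0$ and $N_n^{-1}\to 0$. With this choice all four collar contributions vanish in the limit, and combining with the exact bulk identity yields
$$ \lim_{n\to\infty}\big[\A(x^{\lmd_n}+\phi+\psi_n;-T,T) - \A(x^{\lmd_n};-T,T)\big] = \A(\bar x+\phi;-T,T) - \A(\bar x;-T,T), $$
as claimed. The main obstacle is the bookkeeping around the term $N_n\ep_n^2$ coming from $|\dot\psi_n|^2$ on the collars: one must verify that Lemma \ref{lem;blow-up} really gives $C^1$ (not just $C^0$) convergence on a neighborhood of $\pm T$ — which it does, since $\pm T$ are bounded away from the collision time $0$ — and then that $N_n$ can be taken to grow (so that $N_n\to\infty$ as required in the statement) while still having $N_n\ep_n^2\to 0$; this is exactly the point where the freedom to pick $N_n$ depending on the convergence rate of $x^{\lmd_n}$ is used. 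One should also double-check that the endpoint regularity of $\phi$ is what guarantees $\A(\bar x + \phi;\text{collars})\to 0$ rather than merely staying bounded, but since the collar length shrinks, boundedness of the integrand already suffices.
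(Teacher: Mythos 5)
Your collar analysis is essentially right (including the key observation that $N_n$ must be chosen so that $N_n\ep_n^2\to 0$, where $\ep_n=\sup_{[-T,T]}|x^{\lmd_n}-\bar{x}|$, and that $C^1$ convergence away from $t=0$ from Lemma \ref{lem;blow-up} controls $\dot\psi_n$ there), but your opening reduction contains a genuine gap. You assert that after substituting $x^{\lmd_n}+\phi+\psi_n=\bar{x}+\phi$ on the bulk $B_n=[-T+N_n^{-1},T-N_n^{-1}]$, "$\A(x^{\lmd_n};\cdot)$ differs from $\A(\bar{x};\cdot)$ only by" collar terms, so that everything reduces to killing four collar contributions. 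That is false: $x^{\lmd_n}\ne\bar{x}$ on the bulk. Writing out the decomposition, the difference at stage $n$ equals
$$ \A(\bar{x}+\phi;B_n)-\A(x^{\lmd_n};B_n)+\A(x^{\lmd_n}+\phi+\psi_n;C_n)-\A(x^{\lmd_n};C_n), $$
so even after all collar terms vanish you still must prove $\A(x^{\lmd_n};B_n)\to\A(\bar{x};-T,T)$. This is the heart of the lemma and it does not follow from the uniform $C^0$ convergence $x^{\lmd_n}\to\bar{x}$ on $[-T,T]$, because both $U(x^{\lmd_n})$ and $|\dot{x}^{\lmd_n}|^2$ blow up at the collision time $t=0$, which lies inside $B_n$; uniform convergence of paths gives no control of the integrals there. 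The standard fix (this is what makes the cited Proposition (7.9) of \cite{FT04} nontrivial, and the paper's proof is precisely a reference to that result) is to use the Sundman-type estimates of Proposition \ref{prop;Sundman-est}, which give $n$-independent bounds $|\dot{x}^{\lmd_n}(t)|^2+U(x^{\lmd_n}(t))\le C|t|^{-2\al/(2+\al)}$ near $t=0$; since $2\al/(2+\al)<1$ for $\al\in(0,2)$ this is an integrable majorant, and dominated convergence together with the pointwise convergence away from $t=0$ yields $\A(x^{\lmd_n};B_n)\to\A(\bar{x};-T,T)$. Without this step your argument does not close.

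Two smaller points you should also tighten: the boundedness of $U(x^{\lmd_n}+\phi+\psi_n)$ and of $U(\bar{x}+\phi)$ on the collars requires $\bar{x}+\phi$ to stay away from the origin near $\pm T$ (automatic in the intended application, where $\phi$ is supported near $t=0$, but not a consequence of "$\phi$ is $C^1$ near $\pm T$" alone); and note that your choice $N_n=\lfloor\ep_n^{-1}\rfloor$ correctly depends only on $x^{\lmd_n}$ and $\bar{x}$, which is needed since the statement quantifies $N_n$ before $\phi$.
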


\begin{proof}
The proof is the same as Proposition \textbf{(7.9)} in \cite{FT04}.
\end{proof}

We will only consider Gutzwiller's anisotropic Kepler problem for the rest of the section. Let's introduce the $M$ weighted inner product and norm as 
$$ \me{x, y}:= <x, M y>, \; \|x\|= \me{x, x}^{\ey}, \; \forall x, y \in \rr^d,
$$
where $M = \text{diag}(m_1, \dots, m_d)$ is the diagonal matrix. For simplicity, we further assume 
$$ m_1 \le m_2 \le \cdots \le m_d.$$
Let $\I_j$, $1 \le j \le k$, be a partition of  $\{1, 2, \cdots, d\}$ satisfying $\{i_1, i_2 \} \subset \I_j$ iff $m_{i_1} = m_{i_2}$, and 
$$  m_{i_1} < m_{i_2}, \text{ if } i_1 \in \I_{j_1}, i_2 \in \I_{j_2} \text{ with } j_1 < j_2. $$
Let $\{\ee_i: 1 \le i \le d\}$ being an orthonormal basis of $\rr^d$, we denote $\rr^{\I_j}$ as the linear subspace of $\rr^d$ with basis $\{\ee_i: i \in \I_j\}$. Then the set of critical points of $U$ on $\bs$ is
\begin{equation}
\label{eq;Cr-U-aniso} Cr(U) = \cup_{j =1}^k \Big( \bs \cap \rr^{\I_j} \Big).
\end{equation}

For any $T>0$ and $\sg \in \bs$,  define a local deformation of $\xb$ near the singularity as
\begin{equation}
\label{eq;xb-ep} \xb^{\ep}(t) = \xb(t) + f_{\ep}(t) \sg, \; t \in [-T, T],
\end{equation}
where $f_{\ep}: [-T, T] \to \rr$ is an even function with 
\begin{equation}
\label{eq;f-ep} f_{\ep}(t) = \begin{cases}
0, \; & \text{ when } t \in [\ep^{\frac{2 +\al}{2}}+ \ep, T]; \\
\ep + \ep^{\frac{2 +\al}{2}} -t, \; & \text{ when } t \in [\ep^{\frac{2 +\al}{2}}, \ep^{\frac{2 +\al}{2}}+ \ep]; \\
\ep, \; & \text{ when } t \in [0, \ep^{\frac{2 +\al}{2}}].
\end{cases}
\end{equation} 
The above local deformation and the following computations are essentially the same as those used by Montgomery in an unpublished paper \cite{Mont99a}. By the definition of $\xb$ and $\xb^{\ep}$,
$$
 \begin{aligned}
& U(\xb^{\ep}(t))  - U(\xb(t))  =  \|\xb(t) + f_{\ep}(t) \sg \|^{-\al} - \|\xb(t)\|^{-\al} \\ 
& = \Big( (\kp_+ t)^{\frac{4}{2 +\al}} \|s_+\|^2 + f_{\ep}^2(t) \|\sg\|^2 + 2 (\kp_+ t)^{\pwa} f_{\ep}(t) \me{s^+, \sg}  \Big)^{-\frac{\al}{2}} - (\kp_+ t)^{-\frac{2\al}{2 +\al}} \|s^+\|^{-\al}.
\end{aligned}
$$
$$ \begin{aligned}
\A(\xb^{\ep}; 0, T) - \A(\xb; 0, T) & = \ey \int_{\ep^{\pwb}}^{\ep^{\pwb}+ \ep} |\dot{\xb}^{\ep}|^2 - |\dot{\xb}|^2 \,dt + \int_{0}^{\ep^{\pwb}}+ \int_{\ep^{\pwb}}^{\ep^{\pwb}+\ep}  U(\xb^{\ep}) - U(\xb) \,dt \\
& := A_1 + A_2 + A_3.
\end{aligned}
$$
The following expressions of $A_i$ and $B_i$ will be needed in our proofs. 
\begin{equation*}
\begin{aligned}
\A(\xb^{\ep}; -T, 0) - \A(\xb; -T, 0) & =  \ey \int^{-\ep^{\pwb}}_{-\ep^{\pwb}-\ep} |\dot{\xb}^{\ep}|^2 - |\dot{\xb}|^2 \,dt + \int^{0}_{-\ep^{\pwb}}+ \int^{-\ep^{\pwb}}_{-\ep^{\pwb}-\ep}  U(\xb^{\ep}) - U(\xb) \,dt \\
& := B_1 + B_2 + B_3.
\end{aligned}
\end{equation*}
\begin{equation}
\label{eq;A1}A_1  = \ey \int_{\ep^{\pwb}}^{\ep^{\pwb}+\ep} |\dot{\xb} - \sg|^2 - |\dot{\xb}|^2 \,dt =  \int_{\ep^{\pwb}}^{\ep^{\pwb}+\ep}  \ey - \pwa \kp_+^{\pwa} t^{\frac{-\al}{2 +\al}} \langle \sg, s^+ \rangle \,dt. 
\end{equation}

\begin{equation}
\label{eq;B1}B_1  = \ey \int^{-\ep^{\pwb}}_{-\ep^{\pwb}-\ep} |\dot{\xb} - \sg|^2 - |\dot{\xb}|^2 \,dt = \int^{-\ep^{\pwb}}_{-\ep^{\pwb}-\ep} \ey - \pwa \kp_-^{\pwa} |t|^{\frac{-\al}{2 +\al}} \langle \sg, s^- \rangle \,dt. 
\end{equation}

%%%%%%%%%%%%%%%%%%%%%%%%%%%%%%%%%%%

\begin{equation}
\label{eq;A2} \begin{aligned}
A_2 &= \int_{0}^{\ep^{\pwb}} U(\xb^{\ep})  - U(\xb) \,dt \;  (\text{set } \tau = \ep^{-1} t^{\pwa}) \\
& = \frac{2+\al}{2} \ep^{\frac{2 -\al}{2}} \int_0^1 \frac{\tau^{\al/2}}{\left( \kp_+^{\frac{4}{2+\al}} \|s^+\|^2 \tau^2 + 2 \kp_+^{\pwa} \me{s^+, \sg} \tau +\|\sg\|^2 \right)^{\frac{\al}{2}}} - \frac{\tau^{\al/2}}{\kp_+^{\frac{2\al}{2 +\al}} \|s^+\|^{\al} \tau^{\al}} \,d\tau
\end{aligned}
\end{equation}
\begin{equation}
\label{eq;B2} \begin{aligned}
B_2 &= \int^{0}_{-\ep^{\pwb}} U(\xb^{\ep})  - U(\xb) \,dt \;  (\text{set } \tau = \ep^{-1} (-t)^{\pwa}) \\
& = \frac{2+\al}{2} \ep^{\frac{2 -\al}{2}} \int_0^1 \frac{\tau^{\al/2}}{\left( \kp_-^{\frac{4}{2+\al}} \|s^-\|^2 \tau^2 + 2 \kp_-^{\pwa} \me{s^-, \sg} \tau +\|\sg\|^2 \right)^{\frac{\al}{2}}} - \frac{\tau^{\al/2}}{\kp_-^{\frac{2\al}{2 +\al}} \|s^-\|^{\al} \tau^{\al}} \,d\tau
\end{aligned}
\end{equation}

%%%%%%%%%%%%%%%%%%%%%%%%%%

\begin{equation}
\label{eq;A3}   A_3  =  \int_{\ep^{\pwb}}^{\ep^{\pwb}+\ep} \frac{1}{\Big( (\kp_+ t)^{\frac{4}{2 +\al}} \|s^+\|^2 + f_{\ep}^2(t) \|\sg\|^2 + 2 (\kp_+ t)^{\pwa} f_{\ep}(t) \me{s^+, \sg}  \Big)^{\frac{\al}{2}} } - \frac{1}{(\kp_+ t)^{\frac{2\al}{2 +\al}} \|s^+\|^{\al}} \,dt
\end{equation}

\begin{equation}
\label{eq;B3}   B_3  =  \int^{-\ep^{\pwb}}_{-\ep^{\pwb}-\ep} \frac{1}{\Big( (\kp_- |t|)^{\frac{4}{2 +\al}} \|s^-\|^2 + f_{\ep}^2(t) \|\sg\|^2 + 2 (\kp_- |t|)^{\pwa} f_{\ep}(t) \me{s^-, \sg}  \Big)^{\frac{\al}{2}} } - \frac{1}{(\kp_- |t|)^{\frac{2\al}{2 +\al}} \|s^-\|^{\al}} \,dt
\end{equation}

\begin{lem}
\label{lem;homothetic-not-minimizer} Under condition \eqref{eq;U-aniso}, for any zero energy homothetic solution $\xb(t)$ as \eqref{eq;xbar}, there is an $\sg \in \bs$ and $\xb^{\ep} \in H^1_{-T, T}(\xb(-T), \xb(T))$ as \eqref{eq;xb-ep}, such that for $\ep>0$ small enough,
$$ \A(\xb^{\ep}; -T, T) < \A(\xb; -T, T).$$
\end{lem}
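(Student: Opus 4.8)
The plan is to read off the sign of the difference directly from the explicit formulas \eqref{eq;A1}--\eqref{eq;B3}: using the decomposition $\A(\xb^{\ep};-T,T)-\A(\xb;-T,T)=(A_1+A_2+A_3)+(B_1+B_2+B_3)$ recorded above, I would choose the deformation direction $\sg$ so that $A_2,B_2$ are strictly negative of order $\ep^{\frac{2-\al}{2}}$, while $A_3,B_3\le 0$ and $A_1,B_1$ are of strictly smaller order, and then let $\ep\to 0^+$. The direction I would use is any $\sg\in\bs$ with $\me{s^+,\sg}\ge 0$ and $\me{s^-,\sg}\ge 0$. Such a unit vector always exists: $\{v:\me{s^+,v}\ge 0\}\cap\{v:\me{s^-,v}\ge 0\}$ is an intersection of two closed half-spaces through the origin of $\rr^d$ ($d\ge 2$) and hence contains a nonzero vector — if $Ms^+,Ms^-$ are not anti-parallel the cone has nonempty interior, while if they are anti-parallel (which forces $s^+=-s^-$ since $|s^\pm|=1$) it contains the whole hyperplane $M$-orthogonal to $s^+$. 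Since $f_\ep$ is supported in $[-\ep^{\pwb}-\ep,\ep^{\pwb}+\ep]$, for small $\ep$ one has $\xb^{\ep}\in H^1_{-T,T}(\xb(-T),\xb(T))$ automatically.

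Next I would check that the potential contributions are nonpositive. Writing $v_+(t):=(\kp_+ t)^{\pwa}s^+$, the integrand in \eqref{eq;A3} is $\|v_+(t)+f_\ep(t)\sg\|^{-\al}-\|v_+(t)\|^{-\al}$, and since $f_\ep\ge 0$ and $\me{v_+(t),\sg}=(\kp_+ t)^{\pwa}\me{s^+,\sg}\ge 0$, expanding gives $\|v_+(t)+f_\ep(t)\sg\|^2=\|v_+(t)\|^2+2f_\ep(t)\me{v_+(t),\sg}+f_\ep(t)^2\|\sg\|^2\ge\|v_+(t)\|^2$; hence the integrand is $\le 0$ and $A_3\le 0$ (likewise $B_3\le 0$), and this same computation shows $\xb^{\ep}(t)\ne 0$ throughout $[-T,T]$, so \eqref{eq;A1}--\eqref{eq;B3} are legitimate. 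For $A_2$: in \eqref{eq;A2} the polynomial under the first radical exceeds its counterpart $\kp_+^{4/(2+\al)}\|s^+\|^2\tau^2$ in the second term by at least $\|\sg\|^2\ge m_1>0$ for every $\tau\in[0,1]$ (using $\me{s^+,\sg}\ge 0$), so the integrand is strictly negative, and near $\tau=0$ it is comparable to $-\tau^{-\al/2}$, which is integrable since $\al<2$. Thus $A_2=-c^+\ep^{\frac{2-\al}{2}}$ and $B_2=-c^-\ep^{\frac{2-\al}{2}}$ with $c^\pm>0$ independent of $\ep$.

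Finally, a crude bound on \eqref{eq;A1} and \eqref{eq;B1} gives $|A_1|+|B_1|=O(\ep^{\pwa})=o(\ep^{\frac{2-\al}{2}})$, because $\pwa>\frac{2-\al}{2}$ (equivalently $\al^2>0$). Collecting everything,
\[
\A(\xb^{\ep};-T,T)-\A(\xb;-T,T)\le A_1+A_2+B_1+B_2=-(c^++c^-)\,\ep^{\frac{2-\al}{2}}+o\!\left(\ep^{\frac{2-\al}{2}}\right)<0
\]
for all $\ep>0$ small enough, which is the assertion.

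The point I expect to be the crux is that one neither can nor needs to make the kinetic terms $A_1,B_1$ negative: they are of strictly lower order than the leading potential terms, and it is the positive definiteness of $M$ — entering through the strictly positive quantity $\|\sg\|^2\ge m_1$ — that forces $A_2,A_3,B_2,B_3\le 0$ under merely the non-strict conditions $\me{s^\pm,\sg}\ge 0$. This is exactly what lets a single direction $\sg$ work simultaneously for the incoming ray $s^-$ and the outgoing ray $s^+$, including the straight-line collision--ejection case $s^+=-s^-$, for which no direction has positive $M$-inner product with both.
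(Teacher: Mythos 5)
Your proof is correct, and at the decisive step it takes a genuinely different route from the paper's. Both arguments use the same decomposition into $A_1+A_2+A_3$ and $B_1+B_2+B_3$ and both make $A_2, B_2 \sim -c^{\pm}\ep^{\frac{2-\al}{2}}$ the dominant negative contribution, but they differ in how $\sg$ is chosen and how the kinetic terms are absorbed. The paper first invokes the Sundman-type estimates (Proposition \ref{prop;Sundman-est}) to place $s^{\pm}$ in $Cr(U)=\cup_{j}\big(\bs\cap\rr^{\I_j}\big)$ and then runs a three-case analysis on which blocks contain $s^{\pm}$, in each case exhibiting an explicit $\sg$ for which \emph{both} the Euclidean products $\langle \sg, s^{\pm}\rangle$ and the weighted products $\me{\sg, s^{\pm}}$ are nonnegative; the Euclidean condition is what yields the sign bound $A_1, B_1 \le \ey\ep$. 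You instead impose only $\me{s^{\pm},\sg}\ge 0$, obtain such a $\sg$ from the soft observation that two closed half-spaces through the origin of $\rr^d$, $d\ge 2$, always meet in a nonzero vector (including the antipodal case $s^+=-s^-$, where the intersection is the $M$-orthogonal hyperplane), and dispose of $A_1, B_1$ by the order comparison $\ep^{\pwa}=o\big(\ep^{\frac{2-\al}{2}}\big)$, which is legitimate since $\pwa-\frac{2-\al}{2}=\frac{\al^2}{2(2+\al)}>0$ and since $\int_{\ep^{\pwb}}^{\ep^{\pwb}+\ep}t^{-\al/(2+\al)}\,dt=O(\ep^{\pwa})$ makes the cross term of $A_1$ harmless regardless of the sign of $\langle\sg,s^{\pm}\rangle$. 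Your route buys generality and economy: it never uses that $s^{\pm}$ are critical points of $U$ nor the coordinate structure of $Cr(U)$ under \eqref{eq;U-aniso}, only the positive definiteness of $M$ through $\|\sg\|^2\ge m_1>0$, and it eliminates the case analysis entirely; the paper's route buys an explicit deformation direction in each configuration and a transparent sign for every individual term. Your checks that $\xb^{\ep}$ avoids the origin and matches the boundary data for small $\ep$ are also exactly the points that need recording.
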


\begin{proof}
By Proposition \ref{prop;Sundman-est}, $s^{\pm} \in Cr(U)=\cup_{j =1}^k \Big( \bs \cap \rr^{\I_j} \Big)$. Consider three different cases as below. 

\emph{Case 1}: $s^- \in \bs \cap \rr^{\I_{j_1}}$ and $s^+ \in \bs \cap \rr^{\I_{j_2}} \text{ with } j_1 \ne j_2.$

As a result, $\langle s^-, s^+ \rangle = \me{s^-, s^+}=0$. Let $\sg = s^-$, then \eqref{eq;A1} implies
$$ A_1 = \int_{\ep^{\pwb}}^{\ep^{\pwb}+\ep}  \ey \,dt = \ey \ep. $$
Meanwhile by \eqref{eq;A3} $A_3 < 0$, and by \eqref{eq;A2}, there is a constant $C_1 >0$, such that 
$$ 
A_2 = \frac{2+\al}{2} \ep^{\frac{2 -\al}{2}} \int_0^1 \frac{\tau^{\al/2}}{\left( \kp_+^{\frac{4}{2+\al}} \|s^+\|^2 \tau^2 + \|s^-\|^2 \right)^{\frac{\al}{2}}} - \frac{\tau^{\al/2}}{\kp_+^{\frac{2\al}{2 +\al}} \|s^+\|^{\al} \tau^{\al}} \,d\tau \le -C_1 \ep^{\frac{2-\al}{2}}
$$
Since $\frac{2-\al}{2} \in (0, 1)$, when $\ep>0$ is small enough, 
$$ \A(\xb^{\ep};0, T) -\A(\xb; 0, T) \le \ey \ep - C_1 \ep^{\frac{2-\al}{2}} < 0$$

As $\langle s^-, s^-\rangle =1$ and $\me{s^-, s^-} >0$, similarly by \eqref{eq;B1}, \eqref{eq;B2} and \eqref{eq;B3}, we have 
$$ \A(\xb^{\ep};-T, 0) -\A(\xb; -T, 0) \le \ey \ep - C_1 \ep^{\frac{2-\al}{2}} < 0$$

\emph{Case 2}: $s^{\pm} \in \bs \cap \rr^{\I_{j_0}}$ with $|\I_{j_0}|=1$. 

Then  $\bs \cap \rr^{\I_{j_0}} = \{ \pm \ee_{i_0} \}$, for some $i_0$. We may choose $\sg = \ee_{i_1}$ for any $i_1 \ne i_0$. Then 
$$ \langle \ee_{i_0}, \ee_{i_1} \rangle = \me{\ee_{i_0}, \ee_{i_1}} =0$$
and the rest follows from a similarly argument as above.

\emph{Case 3}: $s^{\pm} \in \bs \cap \rr^{\I_{j_0}}$ with  $|\I_{j_0}| \ge 2$. 

In this case, $\xb(\rr)$ is contained entirely in a two-dim linear subspace of $\rr^{\I_{j_0}}$. As a result, we may choose $\sg$ to be a normalized vector from this plan satisfying
$$ \langle \sg, s^{\pm} \rangle \ge 0.$$
This then implies 
$$ \me{\sg, s^{\pm}} = m_{i_0} \langle \sg, s^{\pm} \rangle \ge 0, \text{ for any } i_0 \in \I_{j_0}.$$
The result now follows from a similar argument as before. 
\end{proof}

\begin{proof}[Proof of Proposition \ref{prop;coll-free-2}] 
Let $x \in H^1_{t_1, t_2}(x_0, y_0)$ be a fixed-time local minimizer of $\A$. By a contradiction argument, let's assume $coll(x)$ is not empty. Then Lemma \ref{lem;coll-iso} implies the existence of an isolated collision instant $t_0 \in (t_1, t_2)$. Without loss of generality, let's assume $t_0 =0$ and $x(t) \ne 0$, for all $t \in [-\dl, \dl] \setminus \{0\}$ for some $\dl>0$ small. 

By Proposition \ref{prop;Sundman-est}, we can find a sequence of positive numbers $\{\lmd_n \searrow 0\}_{n=1}^{\infty}$, such that  $\lim_{n \to \infty} s(\pm \lmd_{n}) = s^{\pm} \in Cr(U)$. Then $\xb(t)$ defined as \eqref{eq;xbar} is a zero energy homothetic solution. 

Choose an arbitrary $T>0$, by Lemma \ref{lem;homothetic-not-minimizer}, for $\ep>0$ small enough, there is a path $\xb^{\ep} \in H^1_{-T, T}(\xb(-T), \xb(T))$ as defined in \eqref{eq;xb-ep}, such that
\begin{equation*}
 \A(\xb^{\ep}; -T, T) - \A(\xb; -T, T) <0. 
\end{equation*}

For each $\lmd_n$, let $x^{\lmd_n}|_{[-\dl/\lmd_n, \dl/\lmd_n]}$ be the $\lmd_n$ blow-up of $x|_{[-\dl, \dl]}$, and for each $n$ large enough, let $y^{\lmd_n}|_{[-\dl/\lmd_n, \dl/\lmd_n]}$ be defined as
\begin{equation}
\label{eq;y-lmd} y^{\lmd_n}(t) = \begin{cases}
 x^{\lmd_n}(t), \; & \text{ when } t \in [-\dl/\lmd_n, \dl/\lmd_n] \setminus [-T, T]; \\
 x^{\lmd_n}(t) + \phi(t)+ \psi_n(t), \; & \text{ when } t \in [-T, T].
\end{cases}
\end{equation}
with $\psi_n(t)$ as \eqref{eq;psi-n} and 
$$ \phi(t) = \xb^{\ep}(t) - \xb(t), \; t \in [-T, T]. $$
Then Lemma \ref{lem;blow-up} implies 
\begin{equation}
\label{eq;blow-up-lim} \begin{aligned}
\lim_{n \to \infty} & \A(y^{\lmd_n}; -\dl/\lmd_n, \dl/\lmd_n) - \A(x^{\lmd_n};-\dl/\lmd_n,\dl/\lmd_n) \\
& = \lim_{n \to \infty} \A(y^{\lmd_n}; -T, T) - \A(x^{\lmd_n}; -T, T) \\
& = \A(\xb +\phi; -T, T) - \A(\xb; -T, T) = \A(\xb^{\ep}; -T, T) - \A(\xb; -T, T) <0.
\end{aligned}
\end{equation}

Let $y_n(t) = \lmd_n^{\pwa} y^{\lmd_n}(\lmd_n^{-1} t)$, $\forall t \in [-\dl, \dl]$. Then when $n$ is large enough, it is a small deformation of $x(t)$ in a small neighborhood of the origin, and by Lemma \ref{lem;action-homo} and \eqref{eq;blow-up-lim} 
$$ \A(y_n; -\dl, \dl) - \A(x; -\dl, \dl) = \lmd_{n}^{\frac{2-\al}{2+\al}} \left(\A(y^{\lmd_n}; -\dl/\lmd_n, \dl/\lmd_n) - \A(x^{\lmd_n};-\dl/\lmd_n, \dl/\lmd_n) \right) <0,
$$
which is absurd. 
\end{proof}

%%%%%%%%%%%%%%%%%%%%%%%%%%%%%%%%%%%%

\section{Bi-hyperbolic solution for Gutzwiller's anisotropic Kepler problem} 

The main purpose of this section is to give a proof of Theorem \ref{thm;bi-hyperbolic-aniso}, so we will only consider the planar case. 
\begin{lem}
\label{lem;HY} Under condition \eqref{U;spiral}, if $x:[T, 0] \to \cc$ is a collision solution with $x(0)=0$ and
\begin{equation}
	\label{eq;x to 1} x(t)/|x(t)| \to \pm 1, \text{ as } t \to 0^+,
\end{equation}
it is not a local minimizer. A similar result holds for an ejection solution $x:[0, T] \to \cc$ with $x(0)=0$. 
\end{lem}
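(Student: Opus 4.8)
The plan is to produce, near the collision, an explicit \emph{transverse} variation of $x$ that strictly lowers $\A$; this variation is modelled on the Euler (equidimensional) Jacobi equation governing transverse perturbations of the collision, and it has negative action precisely because \eqref{U;spiral} pushes the exponents of that Euler equation into the complex domain — the same ``spiralling'' phenomenon that underlies the index computations of \cite{HY18}. By the symmetry $x\mapsto-x$ (which preserves $U$, hence $\A$, and reverses the limiting direction) and by time reversal $t\mapsto-t$ (which preserves the action and the local‑minimality property and interchanges collision and ejection solutions), it suffices to treat a collision solution $x=(x_1,x_2):[T,0]\to\cc$, $T<0$, with $x(t)/|x(t)|\to 1=\ee_1$ as $t\to 0^-$. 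Note that, since $m_1\le m_2$ in Gutzwiller's problem, $\ee_1$ is a \emph{maximum} of $U|_{\bs}$, i.e.\ exactly the direction not handled by a deformation orthogonal to the collision axis.

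First I would record the behaviour near $t=0$. By Proposition \ref{prop;Sundman-est}, $|x(t)|=(\kp|t|)^{\frac{2}{2+\al}}(1+o(1))$ where $\bt_-=U(\ee_1)=m_1^{-\al/2}=\ey\big(\tfrac{2}{2+\al}\kp\big)^2$, so $\kp^2=2m_1^{-\al/2}\big(\tfrac{2+\al}{2}\big)^2$; moreover $x_2(t)/|x(t)|\to 0$. Hence $\langle x,Mx\rangle=m_1x_1^2+m_2x_2^2=m_1x_1^2(1+o(1))$, $x_1(t)=(\kp|t|)^{\frac{2}{2+\al}}(1+o(1))$, and differentiating $U(x)=\langle x,Mx\rangle^{-\al/2}$ twice in $x_2$,
$$\partial_{x_2}^2U(x(t))=-\al m_2\langle x,Mx\rangle^{-\frac{\al}{2}-1}\Big(1-(\al+2)\tfrac{m_2x_2^2}{\langle x,Mx\rangle}\Big)=-\,c\,t^{-2}\,\big(1+o(1)\big),\qquad c:=\frac{2\al m_2}{m_1(2+\al)^2},$$
as $t\to 0^-$; only $x_2(t)/|x(t)|\to 0$ and the Sundman asymptotics are used here, no rate for $s(t)\to\ee_1$ being needed. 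A one‑line computation shows that \eqref{U;spiral} is \emph{equivalent} to $c>\tfrac14$ (indeed $8\al m_2>m_1(2+\al)^2\iff m_2/m_1>1+(2-\al)^2/(8\al)$).

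On the collision‑free arc $x|_{(T,0)}$ the curve solves \eqref{eq;aniso-kepler} smoothly, so for every $\phi\in H^1_0((T,0))$ the first variation of $\A$ in the direction $\phi\ee_2$ vanishes and the second variation is $Q(\phi)=\int_T^0\dot\phi^2+\partial_{x_2}^2U(x(t))\phi^2\,dt$. Fix $c'\in(\tfrac14,c)$; by the asymptotics above, $\partial_{x_2}^2U(x(t))<-c'\,t^{-2}$ for $0<|t|<\dl$ with some $\dl\in(0,-T)$. Set $\om=\ey\sqrt{4c'-1}>0$ and, for a fixed $0<\tau_0<\dl$, let
$$v(t)=|t|^{1/2}\sin\!\big(\om\ln(|t|/\tau_0)\big)\ \text{ for }\ |t|\in[\tau_0e^{-\pi/\om},\tau_0],\qquad v(t)=0\ \text{ elsewhere}.$$
Then $v\in H^1_0((T,0))$, $v\not\equiv0$, $\operatorname{supp}\,v$ is a compact subinterval of $(T,0)$, and $v$ solves the Euler equation $\ddot v+c'\,t^{-2}v=0$ between its two zeros, so integration by parts gives $\int\dot v^2-c'\,t^{-2}v^2\,dt=0$, whence $Q(v)=\int\dot v^2+\partial_{x_2}^2U(x)v^2\,dt<\int\dot v^2-c'\,t^{-2}v^2\,dt=0$. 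Since $\delta\A(x)[v\ee_2]=0$ and $x$ is bounded away from $0$ on $\operatorname{supp}\,v$, we get $\A(x+\eta v\ee_2;T,0)=\A(x;T,0)+\tfrac{\eta^2}{2}Q(v)+O(\eta^3)<\A(x;T,0)$ for all small $\eta\neq0$, with $x+\eta v\ee_2\to x$ in $H^1_{T,0}(x(T),0)$, unchanged endpoints, and no collision on $\operatorname{supp}\,v$; hence $x$ is not a local minimizer. The ejection statement is the time reversal of this one; alternatively, the negative direction found above is exactly the spiralling encoded in \cite{HY18}, from which the same conclusion follows via the blow‑up Lemmas \ref{lem;blow-up} and \ref{lem;blow-up-homo}.

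The only genuine obstacle is pinning down the coefficient $c$ and verifying the sharp equivalence \eqref{U;spiral}$\iff c>\tfrac14$, which places the transverse Jacobi operator just past the Kneser oscillation threshold; once that is in hand the competitor is explicit and the remaining estimates are routine. One should only keep in mind that ``local minimizer'' is meant in the $H^1$ topology with fixed endpoints (and, when the lemma is applied inside the proof of Theorem \ref{thm;bi-hyperbolic-aniso}, within the prescribed homotopy class), which the construction respects because $v$ is supported away from the collision instant.
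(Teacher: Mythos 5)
Your proof is correct, and it takes a genuinely different route from the paper. The paper's proof is essentially a citation: it observes that \eqref{U;spiral} is equivalent to $\tilde{U}''(k\pi) < -\frac{(2-\al)^2}{8}\tilde{U}(k\pi)$ and then invokes Theorem 1.2 of \cite{HY18}, which computes (via Maslov-type index theory) that such a collision arc has infinite Morse index, hence cannot be a local minimizer. You instead make the mechanism behind that index computation explicit and self-contained: you show that along the collision arc the transverse Hessian satisfies $\partial_{x_2}^2 U(x(t)) \sim -c\,t^{-2}$ with $c=\frac{2\al m_2}{m_1(2+\al)^2}$, verify the clean equivalence $c>\tfrac14 \iff$ \eqref{U;spiral} (using $(2+\al)^2=(2-\al)^2+8\al$), and then exhibit a single oscillating test function $v(t)=|t|^{1/2}\sin(\om\ln(|t|/\tau_0))$, supported away from the collision instant, for which the second variation $Q(v)=\int \dot v^2+\partial_{x_2}^2U(x)v^2\,dt$ is strictly negative by the standard Euler-equation integration by parts at the Kneser threshold. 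Your coefficient computation and the equivalence with \eqref{U;spiral} are exact, the first variation vanishes because $v$ is compactly supported in the smooth collision-free arc, and the symmetry/time-reversal reductions are legitimate. What your approach buys is independence from \cite{HY18} and transparency about why \eqref{U;spiral} is the right threshold; what the paper's approach buys is brevity and the stronger conclusion of infinite index (only non-minimality is needed here). One cosmetic point: the lemma's statement writes $t\to 0^+$ for a collision solution on $[T,0]$, which should read $t\to 0^-$; you silently and correctly work with the intended limit.
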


\begin{proof}
In polar coordinates, condition \eqref{U;spiral} is equivalent to 
$$  \tilde{U}''(k\pi) < -\frac{(2-\al)^2}{8}\tilde{U}(k \pi), \; \forall k \in \zz. $$
Meanwhile under the above condition and \eqref{eq;x to 1}, Theorem 1.2 in \cite{HY18} shows the Morse index of the collision solution $x$ is infinite, so it can not be a local minimizer. 

\end{proof}

With this lemma, we can prove the following proposition, which rules out collision in minimizers under certain topological constraints. 
\begin{prop}
\label{prop;coll-free-top-2} Under conditions \eqref{eq;U-aniso} and \eqref{U;spiral}, if $x = re^{i\tht} \in \Lmd_{T^{\pm}}(r_1 e^{i\tht_-}, r_2 e^{i\tht_+})$ satisfies
$$ \A_h(x; T^-, T^+) = \inf \{  \A_h(\xi): \; \forall \xi \in \Lmd(r_1 e^{i\tht_-}, r_2 e^{i \tht_+}) \},$$
then it is collision-free, when $\tht^{\pm}$ satisfy one of the following three conditions:
\begin{enumerate}
\item[(i).] $\tht_- \in [0, \pi]$ and $\tht_+ \in [-\pi, 2\pi];$
\item[(ii).] $\tht_- =0$ and $\tht_+ \in [-2\pi, 2\pi]$;
\item[(iii).] $\tht_- = \pi$ and $\tht_+ \in [-\pi, 3\pi]$. 
\end{enumerate}
\end{prop}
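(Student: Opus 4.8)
The approach is by contradiction: suppose the action minimizer $x=re^{i\tht}\in\Lmd_{T^{\pm}}(r_1e^{i\tht_-},r_2e^{i\tht_+})$ has a collision. Since the endpoints $r_1e^{i\tht_-},r_2e^{i\tht_+}$ are nonzero, every collision instant lies in $(T^-,T^+)$, and on every collision-free subinterval $x$ is an $h$-energy solution — the free-time (Aubry--Mather) argument behind Lemma \ref{lem;free-minimizer-top} is unaffected by the topological constraint, which is insensitive to time reparametrization. Hence Lemma \ref{lem;Lag-Jacobi} gives $\ddot I(t)=4h+2(2-\al)U(x(t))\to+\infty$ near a collision, and, exactly as in Lemma \ref{lem;coll-iso}, collision instants are isolated; fix one, say $t_0$, with $x(t)\neq0$ for $0<|t-t_0|\le\dl$. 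Applying Proposition \ref{prop;Sundman-est} on each side of $t_0$, the ingoing and outgoing directions $s^{\mp}:=\lim_{t\to t_0^{\mp}}x(t)/|x(t)|$ exist and lie in $Cr(U)$; under \eqref{eq;U-aniso} with $m_1<m_2$, formula \eqref{eq;Cr-U-aniso} gives $Cr(U)=\{\pm e_1,\pm e_2\}$, i.e.\ the four directions $\pm1,\pm i$ in $\rr^2\simeq\cc$. So the collision is a \emph{bounce} ($s^+=s^-$), a \emph{straight-through} ($s^+=-s^-$), or a \emph{corner}, and in the last case, since the four directions split into the $e_1$-axis and the $e_2$-axis, a corner always joins an $e_1$-direction to an $e_2$-direction.

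The two kinds of collision are ruled out by two mechanisms. If one of $s^{\pm}$ equals $\pm1$ — in particular, this covers every corner collision — the corresponding one-sided restriction of $x$ is a collision (resp.\ ejection) solution satisfying \eqref{eq;x to 1}, so by Lemma \ref{lem;HY} its Morse index is infinite and it is not a local minimizer; this produces a variation supported in an arbitrarily small neighbourhood of $t_0$ along which $\A$ strictly decreases and which, since the variation need not vanish at $t_0$, moves the trajectory off the origin there. If instead $\{s^-,s^+\}\subseteq\{\pm i\}$, I would rerun the blow-up scheme of Section \ref{sec;coll-free}: by Lemma \ref{lem;blow-up} the rescalings $x^{\lmd_n}$ converge to the homothetic broken ray $\xb$ of \eqref{eq;xbar} built from $s^{\mp}$, and by Lemmas \ref{lem;action-homo}--\ref{lem;blow-up-homo} it is enough to exhibit a deformation $\xb^{\ep}=\xb+f_{\ep}\sg$ as in \eqref{eq;xb-ep}--\eqref{eq;f-ep}, avoiding the origin, with $\A(\xb^{\ep};-T,T)<\A(\xb;-T,T)$ for small $\ep$; taking $\sg=\pm1$ or $\pm i$ one has $\me{\sg,s^{\pm}}=0\ge0$, so the decomposition \eqref{eq;A1}--\eqref{eq;B3} gives $A_1=B_1=\tfrac12\ep$, $A_3,B_3<0$ and $A_2,B_2\le-C\ep^{(2-\al)/2}$ with $C>0$, and since $\tfrac{2-\al}{2}\in(0,1)$ the action strictly decreases.

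The step I expect to be the main obstacle, common to both mechanisms, is topological admissibility of the improved path: $x\in\Lmd(\cdots)$ is only a weak $H^1$-limit of collision-free loops, so the amount by which a continuous determination of $\text{Arg}(x(t))$ jumps across $t_0$ is not pinned down a priori, and the improved path produced above must still lie in $\Lmd_{T^{\pm}}(r_1e^{i\tht_-},r_2e^{i\tht_+})$, i.e.\ wind about the origin the same way $x$ does. The plan is first to show that, because $x$ minimizes, no loop around the origin survives the limit (each would carry a fixed positive amount of action), so the jump of $x$ at $t_0$ is the minimal one turning $s^-$ into $s^+$; and then to check that under each of (i)--(iii) the pair $(\tht_-,\tht_+)$ lies in a range — total winding at most a full turn, with $s^-$ confined to the closed upper half-circle in case (i), anchored at $+e_1$ in case (ii), anchored at $-e_1$ in case (iii) — for which the minimizing arcs of $x$ on $[T^-,t_0]$ and $[t_0,T^+]$ cannot wrap past the ``wrong'' critical direction, so that this minimal jump can always be realized on the correct side of the origin, that is, by choosing the sign of $\sg$ (resp.\ the direction of the Lemma \ref{lem;HY} variation) compatibly. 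Carrying out this winding bookkeeping — reconciling the weak-limit homotopy class of $x$ with the locally available deformations near $t_0$ — is where I expect most of the work to lie; the remaining estimates are routine given Lemmas \ref{lem;blow-up}--\ref{lem;blow-up-homo}, \ref{lem;homothetic-not-minimizer}, and \ref{lem;HY}.
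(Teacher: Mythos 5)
Your overall strategy matches the paper's: argue by contradiction, use Proposition \ref{prop;Sundman-est} to place the one-sided limiting directions at the collision in $Cr(U)=\{\pm1,\pm i\}$, invoke Lemma \ref{lem;HY} to exclude collisions along $\pm 1$, and run the blow-up and local-deformation machinery of Section \ref{sec;coll-free} with an orthogonal $\sg$ to exclude collisions along $\pm i$. Two small discrepancies: the paper shows there is exactly \emph{one} collision instant by deleting any sub-loop $x|_{(0,t_0)}$ between two collisions (which strictly decreases $\A_h$ while remaining in the class), rather than by the Lagrange--Jacobi isolation argument; and for $s^{\pm}\in\{\pm i\}$ only $\sg=\pm1$ is $M$-orthogonal to $s^{\pm}$, so the phrase ``$\sg=\pm1$ or $\pm i$'' is not right as stated.

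The genuine gap is the step you yourself flag as ``where I expect most of the work to lie'': verifying that the improved path still lies in $\Lmd_{T^{\pm}}(r_1e^{i\tht_-},r_2e^{i\tht_+})$, and, more basically, pinning down \emph{which} elements of $Cr(U)$ the two arcs approach. That is exactly where hypotheses (i)--(iii) enter, and your proposal never actually uses them. The paper's device is the reflection symmetry of $U$ under \eqref{eq;U-aniso} across both coordinate axes: reflecting sub-arcs costs no action, so one may assume $x|_{[T^-,0]}$ lies entirely in the closed quadrant containing $x(T^-)$ and $x|_{[0,T^+]}$ in the quadrant containing $x(T^+)$. This restricts $\lim_{t\to0^{\mp}}\tht(t)$ to the critical angles on the boundary of that quadrant, Lemma \ref{lem;HY} then forces the limit to be the unique admissible angle in $\{\pi/2,3\pi/2\}$ adjacent to it, and topological admissibility of the deformation reduces to an explicit case-by-case choice of $\sg$ (namely $e^{i\pi}$, $e^{i\pi/2}$, $e^{i0}$, $e^{-i\pi}$, $e^{i2\pi}$ for the five configurations of $(\tht_-,\tht_+)$ permitted by (i)--(iii)), which pushes the perturbed path to the correct side of the origin so that it realizes the prescribed jump of the argument. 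Your alternative plan (``no loop around the origin survives the limit,'' hence the jump at $t_0$ is minimal) is asserted but not substantiated, and without some such symmetry or quantitative input the winding of the weak limit across the collision is indeed not determined; consequently neither the Lemma \ref{lem;HY} variation nor the deformation $\xb+f_{\ep}\sg$ is known to stay in the constraint class. The proof is therefore incomplete at its decisive step.
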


\begin{proof}
By a contradiction argument, let's assume $x$ is not collision-free. Without loss of generality, we may further assume $0 \in (T^-, T^+)$ and $x(0)=0$. The fact that $x$ is an $h$-free-time minimizer under topological constraint implies zero must be the only collision moment, as if there is another collision moment $t_0>0$ (the argument for $t_0 <0$ is the same), by deleting the sub-path $x|_{(0, t_0)}$, we get a new path still belonging to $\Lmd_{T^{\pm}}(r_1 e^{i \tht_-}, r_2 e^{i \tht_+})$, but with strictly smaller $\A_h$ action value, which is absurd. 

For $i=1, \dots, 4$, let $\Pi_i$ be the $i$-th quadrant of the complex plane. Here each $\Pi_i$ should be seen as a close set including its boundary. Notice that under condition \eqref{eq;U-aniso}, the potential $U$ is symmetric with respect to both the real and complex axes. This means we may assume the sub-path $x|_{[T^-, 0]}$ (resp. $x|_{[0, T^+]}$) is entirely contained in the quadrant containing $x(T^-)$ (resp. $x(T^+)$).

We will divide our proof into several different cases.

\emph{Case 1}: $\tht_- \in [0, \pi]$ and $\tht_+ \in [\pi, 2\pi]$. 
By the above argument, we may assume $x([T^-, 0])$ is entirely contained in $\Pi_1$ or $\Pi_2$ depending on the value $\tht_-$. By Proposition \ref{prop;Sundman-est},
$$ \lim_{t \to 0^-} \tht(t) \in \{0, \pi/2, \pi \}.$$
Lemma \ref{lem;HY} then implies the limit must be $\pi/2$, as otherwise $x|_{[T^-, 0]}$ is not a local minimizer, which is absurd. 

Meanwhile $\tht_+ \in [\pi, 2\pi]$ implies $x([0, T^+])$ is entirely contained in $\Pi_3$ or $\Pi_4$ and a similar argument as above shows 
$$ \lim_{t \to 0^{+}} \tht(t) = 3\pi/2. $$ 
Let $\bar{x}: \rr \to \cc$ be the zero energy homothetic solution given as below 
$$ \bar{x} = \begin{cases}
(\kp_+ t)^{\frac{2}{2 +\al}} e^{i \frac{3\pi}{2}}, & \text{ if } t \ge 0; \\
(-\kp_- t)^{\frac{2}{2 +\al}} e^{i \frac{\pi}{2}}, & \text{ if } t \le 0.
\end{cases}
$$
For any $\ep>0$ small enough, define a local deformation of $\bar{x}$ near the origin as 
\begin{equation}
\label{eq;x-bar-ep} \bar{x}^{\ep}(t) = \bar{x}(t) + f_{\ep}(t) \sg, \text{ where } \sg = e^{i \pi}.
\end{equation}
Here $f_{\ep}(t)$ is an even function defined as in \eqref{eq;f-ep}.  

For $\bar{x}$ and $\sg$ given as above, by \emph{Case 2} in the proof of Lemma \ref{lem;homothetic-not-minimizer},
$$ \A(\bar{x}^{\ep}; -T, T) < \A(\bar{x}; -T, T), \;\forall T>0. $$
Meanwhile for a sequence of positive numbers $\{ \lmd_n \searrow 0 \}_{n =1}^{\infty}$, let $x^{\lmd_n}$ be a $\lmd_n$-blow-up of $x$, 
$$ x^{\lmd_n}(t) = \lmd_n^{-\frac{2}{2+\al}} x(\lmd_n t), \;  t \in [T^-/\lmd_n, T^+/\lmd_n]. $$

Fix a $T>0$, for $n$ large enough, we define a deformation of $x^{\lmd_n}$ as 
$$ y^{\lmd_n}(t) = \begin{cases}
x^{\lmd_n}(t), & \text{ when } t \in [T^-/\lmd_n, T^+/\lmd_n] \setminus [-T, T];\\
x^{\lmd_n}(t) + \phi(t) + \psi_n(t), & \text{ when } t \in [-T, T],
\end{cases}
$$
with $\psi_n(t)$ as in \eqref{eq;psi-n} and 
$$ \phi(t) = \bar{x}^{\ep}(t) - \bar{x}(t), \; \text{when } t \in [-T, T]. $$
The above definitions imply $y^{\lmd_n} \in \Lmd_{T^{\pm}/\lmd_n}( \lmd_n^{-\frac{2}{2 +\al}}r_1e^{i \tht_-}, \lmd_n^{-\frac{2}{2+\al}}r_2 e^{i \tht_+} )$. As a result,
$$ y_n(t) = \lmd^{\frac{2}{2+\al}} y^{\lmd_n}(\lmd_n^{-1} t)\Lmd_{T^{\pm}}(r_1 e^{i \tht_-}, r_2 e^{i \tht_+}). $$

Then the same argument as in the proof of Proposition \ref{prop;coll-free-2} implies 
$$ \A(y_n; T^-, T^+) < \A(x; T^-, T^+), $$
which is absurd. 

The proofs for the remaining cases are almost the same, the only difference is to choose the correct $\sg$ in \eqref{eq;x-bar-ep} corresponding to different choices of $\tht_{\pm}$, which are listed below. 

\emph{Case 2}: $\tht_- \in [0, \pi]$ and $\tht_+ \in [0, \pi]$. 
$$ \sg = e^{i \frac{\pi}{2}}. $$

\emph{Case 3}: $\tht_- \in [0, \pi]$ and $\tht_+ \in [-\pi, 0]$.
$$ \sg = e^{i 0}. $$

\emph{Case 4}: $\tht_- = 0$ and $\tht_+ \in [-2\pi, -\pi]$. 
$$ \sg = e^{-i \pi} . $$

\emph{Case 5}: $\tht_- = \pi$ and $\tht_+ \in [2\pi, 3\pi]$
$$ \sg = e^{i 2\pi}. $$

\end{proof}

%%%%%%%%%%%%%%%%%%%%%%%%%%%%%%%%%%%%%%%

\begin{proof}[Proof of Theorem \ref{thm;bi-hyperbolic-aniso}]

First let's consider the case that $s^{\pm}$ satisfy condition (i). As the system is symmetric with respect to the real axis, we may further assume $s^-_2 >0$. Condition (i) then implies $s^+_2 \le 0$. As a result, we can find $\tht_{\pm} \in \rr$ satisfying $s^{\pm}=e^{i\tht_{\pm}}$ and
$$  \tht_- \in (0, \pi), \; \tht_+ \in [-\pi, 2\pi] \; \text{ and } \; |\tht_+ -\tht_-| >\pi.
$$

Next let's consider the case that $s^{\pm}$ satisfy condition (ii). As the system is also symmetric with respect to the imaginary axis, we may assume $s^{\pm}=1$.  Then we can find $\tht_{\pm}$, such that 
$$ \tht_-=0, \; \tht_+ \in [-2\pi, 2\pi] \; \text{ and } \; |\tht_+ -\tht_-| >\pi. $$

In both cases, such a pair of $\tht_{\pm}$ must satisfy one the conditions given in Proposition \ref{prop;coll-free-top-2}, then the desired bi-hyperbolic solution can be obtained by the same proof of Theorem \ref{thm;bi-hyper}. 

\end{proof}

\hfill\newline
\noindent{\bf Acknowledgement.} The author thanks Dr. Gian Marco Canneori for a careful reading of the paper and for pointing out several typos in an early draft. The suggestions by the referees are greatly appreciated, especially those that helped improve the statement of Theorem \ref{thm;asym}.

\hfill\newline
\noindent{\bf Data Availability.} Data sharing is not applicable to this article, as no datasets were generated or analyzed during the current study.

\hfill\newline
\noindent{\bf Conflict of interest} The author declares that there is no conflict of interest.

\hfill\newline
%\noindent{\bf Acknowledgement.}  
\bibliographystyle{abbrv}
\bibliography{refScattering}

\end{document}